\documentclass[11pt]{amsart}
\usepackage[utf8x]{inputenc}
\usepackage{amsthm,amssymb, amsmath}
\usepackage{graphicx}
\usepackage{stmaryrd}
\usepackage[all]{xy}


\newtheorem{theorem}{Theorem}[section]
\newtheorem{corollary}[theorem]{Corollary}

\newtheorem{lemma}[theorem]{Lemma}
\newtheorem{proposition}[theorem]{Proposition}

\newtheorem{definition}[theorem]{Definition}

\newcommand{\matr}[4]{
\left( \begin{array}{cc} #1 & #2 \\ #3 & #4 \end{array} \right)}
\newcommand{\vect}[2]{
\left( \begin{array}{c} #1 \\ #2 \end{array} \right)}

\title{Galois conjugates of entropies of real unimodal maps}
\author{Giulio Tiozzo}
\address{ICERM, Brown University, 
121 South Main St, Providence RI 02903.}
\email{Giulio\_Tiozzo@brown.edu}

\begin{document}
\maketitle
\section{Introduction}

A classical way to measure 
the complexity in the orbit structure of a dynamical system $f : X \to X$
is its  \emph{topological entropy} $h(f)$. 
When the system has a Markov partition, then its topological entropy 
is the logarithm of an algebraic number: 
in fact, if we call \emph{growth rate} of $f$ 
the quantity 
$$s(f) := e^{h(f)}$$
then $s(f)$ is the leading eigenvalue of the transition matrix associated to the 
partition. In this paper, we are interested in the relationship between the dynamical properties 
of $f$ and the algebraic properties of its growth rate.

By the Perron-Frobenius theorem it follows immediately that $s(f)$
must be 
a \emph{weak Perron number}, i.e. a real algebraic integer which is 
at least as large as the modulus of all its Galois conjugates. 
In \cite{Th}, Thurston asked the converse
 
\medskip 
\textbf{Question.} What algebraic integers arise as growth rates 
of dynamical systems with a Markov partition?
\medskip

The question makes sense in several contexts, e.g. for pseudo-Anosov maps of surfaces, 
as well as automorphisms of the free group. In this note we shall focus on \emph{multimodal maps}, 
i.e. continuous interval maps which have finitely many intervals of monotonicity (e.g., polynomial maps).
In this context, the condition of having a Markov partition can be reformulated by saying that 
a multimodal map is \emph{postcritically finite} if the orbits of all its critical 
points are finite. For these maps, the above question was settled in the following

\begin{theorem}[\cite{Th}]
The set of all growth rates of postcritically finite 
multimodal interval maps coincides with the set of all weak Perron numbers.
\end{theorem}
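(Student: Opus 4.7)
The plan is to handle the two inclusions separately, with most of the effort going into the harder direction. One containment is already noted in the excerpt: if $f$ is postcritically finite multimodal, then its transition matrix has non-negative integer entries, and Perron--Frobenius yields that $s(f)$ is a weak Perron number. The substantive content is the converse, so the bulk of the argument starts from an arbitrary weak Perron number $\lambda \ge 1$ and explicitly constructs a postcritically finite multimodal interval map $f$ with $s(f)=\lambda$.

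The first step would be algebraic: realize $\lambda$ as the leading eigenvalue of a non-negative integer square matrix $A$. This is Lind's characterization of weak Perron numbers, whose proof produces a matrix whose characteristic polynomial is the minimal polynomial of $\lambda$ multiplied by auxiliary cyclotomic-like factors chosen to absorb any negative coefficients. From $A$ one reads off a finite directed graph $G$ together with a positive left Parry eigenvector $v$ satisfying $vA=\lambda v$; this data will prescribe the Markov partition and the widths of its atoms.

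The second step is geometric: order the vertices $1,\dots,n$ of $G$ along $[0,1]$ and assign to each a subinterval $I_i$ of length proportional to $v_i$. For every edge $i\to j$ of $G$, subdivide $I_i$ so as to map an affine piece of slope $\pm\lambda$ onto $I_j$. The transition matrix of the resulting collection of branches is $A$ by construction, so if the pieces can be assembled into a continuous map $f:[0,1]\to[0,1]$, then $s(f)=\lambda$ automatically, $f$ is multimodal, and the forward orbits of its turning points are contained in the finite set of partition endpoints, giving postcritical finiteness for free.

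The main obstacle is therefore purely combinatorial: one must choose the vertex ordering and the orientation signs ($\pm$) of the affine branches so that the boundary values match at every junction. This is Thurston's ``folding'' of a directed graph into a linearly ordered interval, and the heart of the proof lies in showing that a valid folding exists for \emph{every} non-negative integer matrix whose spectral radius is the prescribed weak Perron number $\lambda$. Once such a folding is produced, the resulting $f$ is the desired postcritically finite multimodal realization of $\lambda$, and the two inclusions together give the stated equality of sets.
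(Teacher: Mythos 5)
The theorem you are proving is not proved in this paper: it is cited from Thurston's preprint \cite{Th} as background, so there is no in-paper proof to compare against. Judged on its own, your sketch correctly identifies the overall architecture of Thurston's argument --- Perron--Frobenius for the easy inclusion, and, for the converse, realize $\lambda$ as the spectral radius of a non-negative integer matrix, build a Markov partition with atom widths given by a Perron eigenvector and branches of slope $\pm\lambda$, and then ``fold'' the incidence data into a continuous interval map. However, the sketch defers the entire mathematical content to a step it never carries out, and moreover misstates what that step must accomplish.

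You write that ``the heart of the proof lies in showing that a valid folding exists for \emph{every} non-negative integer matrix whose spectral radius is the prescribed weak Perron number $\lambda$,'' and then conclude ``Once such a folding is produced, the resulting $f$ is the desired realization.'' Two problems. First, that universal statement is false: take the $3\times 3$ cyclic permutation matrix. A continuous interval map realizing it would have to send $I_1\to I_2\to I_3\to I_1$ homeomorphically, hence would be a homeomorphism of the interval; but an interval homeomorphism is monotone and so either fixes or transposes the outer atoms, never cyclically permuting three of them. So no ordering and no choice of signs folds that matrix. What is actually true (and what needs proving) is the existential statement: for each weak Perron $\lambda\ge 1$ one can produce \emph{some} non-negative integer matrix with spectral radius $\lambda$ together with an ordering and sign assignment that does fold. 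Second, even with the correct formulation, you have not produced the folding or indicated how to; this is precisely the hard part of Thurston's theorem, so the proposal is an outline rather than a proof. A smaller point worth flagging: Lind's theorem as usually stated characterizes (strict) Perron numbers as spectral radii of primitive non-negative integer matrices; the weak Perron version, which must allow reducible or periodic matrices and must accommodate $\lambda=1$, needs a separate statement or derivation and should not be cited casually as ``Lind's characterization of weak Perron numbers.''
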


The question becomes more subtle when one restricts oneself to maps of a given degree. 
In particular, in the case of degree two, Thurston looked at the algebraic properties 
of growth rates of postcritically finite real quadratic polynomials; remarkably, he found out 
that the union of all their Galois conjugates exhibits a rich fractal structure
(Figure \ref{hspectrum}). Moreover, he claimed that such a fractal set is path-connected.
In this note we will formally introduce this object and study its geometry.

Let $f_c(z) := z^2+c$ be a real quadratic polynomial, with $c \in [-2, 1/4]$.
We shall call the map $f_c$ \emph{superattracting} if the critical point $z = 0$ is periodic.
Each superattracting parameter is the center of a hyperbolic component in the Mandelbrot set; 
let us denote by $M_0$ the set of all superattracting parameters. Moreover, 
if $\lambda$ is an algebraic number, we shall denote by $Gal(\lambda)$ the set 
of Galois conjugates of $\lambda$, i.e. the set of roots of its minimal polynomial. 


\begin{definition}
We shall call \emph{entropy spectrum} $\Sigma$ the closure of the set of Galois conjugates of 
growth rates of superattracting real quadratic polynomials:
$$\Sigma := \overline{ \bigcup_{c \in M_0 \cap \mathbb{R}} Gal(s(f_c)) }.$$
\end{definition}
The set $\Sigma$ is a compact subset of $\mathbb{C}$, and it displays a lot of structure 
(see Figures \ref{hspectrum}, \ref{third}). We will establish the following:

\begin{theorem} \label{main}
The set $\Sigma$ is path-connected and locally connected.
\end{theorem}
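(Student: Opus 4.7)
The plan is to exhibit $\Sigma$ as a compact, connected, locally connected subset of $\mathbb{C}$, equivalently (by the Hahn--Mazurkiewicz theorem) as the continuous image of the unit interval; path-connectedness and local connectedness then follow simultaneously from a single topological input.

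The natural parametrization comes from Milnor--Thurston kneading theory. To each real quadratic $f_c$, associate its kneading power series $K_c(z) = \sum_{n \geq 0} \epsilon_n(c) z^n$ with $\epsilon_n(c) \in \{-1, +1\}$, whose smallest positive real zero equals $e^{-h(f_c)} = 1/s(f_c)$. For superattracting $c$ the sequence $(\epsilon_n(c))$ is eventually periodic, so $K_c$ is a rational function whose numerator is divisible by the minimal polynomial of $1/s(f_c)$; consequently the reciprocals of the Galois conjugates of $s(f_c)$ all appear as zeros of $K_c$ in $\mathbb{C}$. In particular, $\Sigma$ is obtained (after the inversion $z \mapsto 1/z$) from the union of zero sets of the rational functions $K_c$ as $c$ ranges over real superattracting parameters.

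Using the Milnor--Thurston monotonicity theorem, I would reparametrize the real slice by entropy $h \in [0, \log 2]$ and study how the zero set of $K_{c(h)}$ depends on $h$. On each plateau of the entropy function the kneading sequence $\nu(h)$ is constant and the zeros do not move. At each of the countably many jumps, one compares the left and right limits of $K_{c(h)}$ with the central limit as formal power series: they agree in arbitrarily long initial blocks, so uniform convergence on compact subsets of the open unit disk combined with Rouch\'e's theorem yields convergence of the zero sets in the Hausdorff metric. Assembling the zero loci into a graph $T$ over $[0, \log 2]$ and composing with $z \mapsto 1/z$ then realizes $\Sigma$ as the image of a Peano continuum under a continuous map, as required.

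The main obstacle is establishing the uniform Hausdorff continuity of the zero set. While the dominant zero $1/s(f_c)$ moves continuously with $h$ (being $e^{-h}$), the zeros of smaller modulus --- corresponding, after inversion, to the higher-modulus Galois conjugates of $s(f_c)$ --- can in principle be very sensitive to perturbations of the coefficients. The admissibility conditions on real unimodal kneading sequences (the lexicographic inequalities of Milnor--Thurston, together with the rigidity imposed by postcritical finiteness) must be exploited to rule out large motions of these zeros and obtain the uniform estimates needed to conclude that $\Sigma$ is a Peano continuum; I expect this to be where the bulk of the technical work resides.
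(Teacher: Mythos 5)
Your proposal captures the right starting point (the Milnor--Thurston kneading determinant and its zero set, the $z\mapsto 1/z$ inversion, continuity via Rouch\'e), and in fact the fibered set $\{(c,z): K_c(z)=0\}$ is exactly the object the paper builds its argument around. But there are two genuine gaps that the paper has to work quite hard to close, and your sketch does not address either.

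First, the zero set of $K_c$ gives what the paper calls the \emph{kneading spectrum} $\Sigma_{kn}$, which only contains $\Sigma$: the kneading polynomial $P_c(t)$ (the numerator of $K_c$) is frequently reducible, so its zeros need not all be Galois conjugates of $s(f_c)$. This happens systematically for renormalizable parameters (by the tuning formula $P_{c_2}(t)=P_{c_1}(t^p)P_{c_0}(t)$) and even for some non-renormalizable ones (the paper gives the example $1-t-t^2+t^3-t^4+t^5-t^6 = (1-t-t^3)(1-t^2+t^3)$ of period $7$). You write that the reciprocals of the Galois conjugates ``all appear as zeros of $K_c$,'' which is true and gives $\Sigma\subseteq\Sigma_{kn}$, but your proposed parametrization produces $\Sigma_{kn}$, not $\Sigma$, and a substantial portion of the paper (the Eisenstein-type irreducibility lemmas, the lemma on $f(x^{2^n})$, the density of dominant parameters with irreducible kneading polynomial, and the renormalization analysis) is devoted precisely to bridging this gap.

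Second, the Hausdorff-continuity you flag as the ``main obstacle'' is not merely technically delicate but actually fails. The power series $K_c(t)$ converges only on the open unit disk, so Rouch\'e-type continuity controls only the zeros with $|z|<1$, which after inversion accounts for $\Sigma\cap\mathbb{E}$. The Galois conjugates \emph{inside} the unit disk correspond to zeros of the reciprocal polynomial with $|z|>1$, and these do not vary continuously in $c$ (the degree of $P_c$ jumps wildly; the paper states explicitly that this zero set ``need not (and probably does not) vary continuously''). No amount of exploiting admissibility will rescue this. The paper sidesteps the issue entirely by proving the exact identity $\Sigma\cap\mathbb{D}=\Sigma_{\pm 1}\cap\mathbb{D}$ with Bousch's set of zeros of $\pm 1$ power series, whose connectivity is already known; it then proves $\Sigma$ contains an annulus around $S^1$ so the two halves glue. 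A third, smaller point: the paper does not invoke Hahn--Mazurkiewicz; it proves path-connectedness directly via a path-lifting lemma (des Jardins--Knill, via Odlyzko--Poonen) applied fiber-by-fiber, and local connectedness via a separate lemma of Bousch, rather than trying to realize $\Sigma$ as a single continuous image of $[0,1]$ -- and in any case Hahn--Mazurkiewicz would require local connectedness as an input, so it would not save you any work.
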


\begin{figure}[h!]
\includegraphics[width=0.95\textwidth]{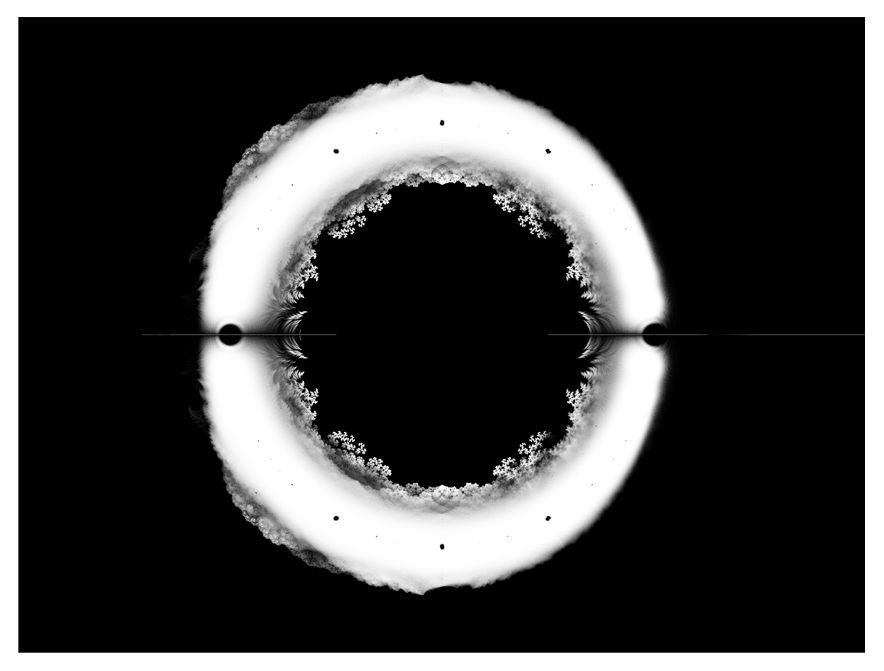}
\caption{The entropy spectrum $\Sigma$ for real quadratic polynomials. 
}
\label{hspectrum}
\end{figure}
The proof follows the techniques used by Bousch \cite{B1}, \cite{B2} and Odlyzko-Poonen \cite{OP}
to prove the path-connectivity of the sets of zeros of certain power series with prescribed coefficients
(see Figure \ref{pmu}).
The reason of this connection is the \emph{kneading theory} of Milnor and Thurston \cite{MT}. 
Indeed, they 
showed that for each map $f_c$ one can construct a certain power 
series $K_c(t)$, known as \emph{kneading determinant}, 
in such a way that the inverse of the growth rate of $f_c$ is a zero of $K_c(t)$.
Thus, the set $\Sigma$ is closely related to the set $\Sigma_{kn}$ of all 
zeros of all kneading determinants (see section \ref{section:kns}).

The proof of the main theorem will be split in several parts. 
We shall denote by $\mathbb{D} := \{ z \in \mathbb{C} \ : \ |z| < 1\}$ the unit disk in the complex plane, and 
by $\mathbb{E} := \{ z \in \mathbb{C} \ : \ |z| > 1\}$ the part of the plane 
outside the closed unit disk.
The part of $\Sigma$ inside the unit disk will be analyzed 
by comparing it to the set $\Sigma_{\pm 1}$ of zeros of all polynomials with coefficients $\pm 1$
(section \ref{section:pmu}); in fact, we shall prove that $\Sigma$ 
and $\Sigma_{\pm 1}$ coincide inside the unit disk (Proposition \ref{equalpmu}). 
On the other hand, the part outside the disk will require a different analysis. 
Namely, we shall first analyze the kneading set $\Sigma_{kn}$, proving that 
the intersection $\Sigma_{kn} \cap \mathbb{E}$ is connected and locally connected (section \ref{section:kns}). 
Finally, in order to prove the same statement about 
$\Sigma$, we shall then address the question of which polynomials given 
by the kneading theory are in fact irreducible. As we shall see (section \ref{section:irredrenorm}),  
this question is closely related to the combinatorics of renormalization. 
We shall also prove that $\Sigma$ and $\Sigma_{kn}$ both contain a neighbourhood 
of the unit circle (section \ref{section:neigh}), completing the proof. 

Finally, it is worth pointing out that fractal sets similar to $\Sigma$ can be constructed
using other families of quadratic polynomials. In particular, one can consider 
for each postcritically finite quadratic polynomial $f$ the restriction of $f$ to its Hubbard tree, 
and its growth rate will be an algebraic number.
Thus, one can construct for instance the set of Galois conjugates of growth rates of 
superattracting maps along any vein in the Mandelbrot set (see the Appendix for some pictures), 
or even consider all centers of all hyperbolic components at once.
The corresponding questions about the geometry of these sets are still open.




\subsection*{Acknowledgements} 
All the essential ideas go back to W. Thurston: this paper wants to be 
a step towards a more complete understanding of his last works, which are
extremely rich and deserve to be completed in detail.
I wish to thank C. T. McMullen for putting me in contact with Thurston's work, in particular 
the preprint \cite{Th}. I also thank S. Koch, Tan Lei and B. Poonen for useful conversations.

\section{Review of kneading theory}

Let $f(z) = z^2 + c$. We define the \emph{sign} $\epsilon(x)$ of a point $x \neq 0$ with respect to the partition 
given by the critical point to be  
$$\epsilon(x) := \left\{ \begin{array}{ll} -1 & \textup{ if }x < 0 \\
					+1 & \textup{ if }x > 0. 
\end{array} \right.$$
Moreover, for each $k \geq 1$ we define $\eta_k(x) := \epsilon(x) \epsilon(f(x)) \dots \epsilon(f^{k-1}(x))$.
If the forward orbit of $x$ does not contain the critical point, then the \emph{kneading series} of $x$ is defined as 
$$K(x, t) := 1 + \sum_{k = 1}^\infty \eta_k(x) t^k.$$
We now associate to each map $f_c$ a power series $K_c(t)$, known as \emph{kneading determinant}. 
If the critical point is not periodic, then we define
$$K_c(t) := K(c,t).$$
Otherwise, we define 
$$K_c(t) := \lim_{x \to c^+} K(x, t)$$
where the limit is taken over all subsequences such that $x$ does not map to the critical point.
The series $K_c(t)$ converges in the disk of unit radius, and its smallest real positive root is the inverse of the growth rate:
\begin{theorem}[\cite{MT}] \label{MTknead}
Let $s$ be the growth rate of $f_c$. Then the function $K_c(t)$ has no zeros 
on the interval $[0, 1/s)$, and if $s > 1$ we have 
$$K_c(1/s) = 0.$$
\end{theorem}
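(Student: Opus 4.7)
The plan is to relate the kneading determinant $K_c$ to a lap-counting generating function whose singularity structure is controlled by $s$.

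First, I would set up the lap-counting framework. Let $\ell_n$ denote the number of maximal monotonicity intervals (laps) of $f_c^n$. By the definition of topological entropy for piecewise monotone interval maps, $s = \lim_{n\to\infty}\ell_n^{1/n}$, so the generating function
$$L(t) := \sum_{n=0}^\infty \ell_{n+1}\, t^n$$
has radius of convergence exactly $1/s$ by Cauchy-Hadamard. Since its coefficients are positive integers, Pringsheim's theorem guarantees that $t = 1/s$ is a genuine singularity of $L$ on its circle of convergence.

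The combinatorial heart of the argument is the Milnor-Thurston identity expressing $L(t)$ in terms of $K_c(t)$. I would derive it by observing that each new lap boundary of $f_c^n$ arises from a preimage of the critical point $0$ under some $f_c^k$ with $k<n$, and the signs $\eta_k(c)$ record on which side of $0$ the orbit of $c$ lies at time $k-1$. A careful accounting of how laps are created under iteration, grouped according to these signs, produces an identity of the shape
$$(1-t)\,L(t)\,K_c(t) = U(t),$$
where $U(t)$ is an explicit power series analytic in the open unit disk and satisfying $U(1/s)\neq 0$ whenever $s>1$. In the Milnor-Thurston formalism this is the determinant computation for the $1\times 1$ kneading matrix of a unimodal map.

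From the identity the conclusion is immediate. For $|t|<1$ the factor $(1-t)$ is nonzero, so every singularity of $L$ inside the open unit disk must come from a zero of $K_c$, while conversely a zero of $K_c$ at a point where $U$ is nonzero forces $L$ to blow up. Assuming $s>1$ so that $1/s<1$, Pringsheim gives that $L$ is analytic on $[0,1/s)$ but singular at $1/s$; transported through the identity, this yields $K_c(t)\neq 0$ on $[0,1/s)$ together with $K_c(1/s)=0$, which is the statement.

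The main obstacle is deriving the identity in the middle step: translating the piecewise-monotone dynamical quantity $\ell_n$ into a symbolic generating series requires either an explicit recursion for $\ell_n$ in terms of the signs $\eta_k(c)$, or the equivalent Milnor-Thurston determinant computation. Neither step is conceptually deep, but the bookkeeping in the periodic critical case, where $K_c$ is defined by the one-sided limit $\lim_{x\to c^+}K(x,t)$, has to be handled with care so that the limiting series indeed matches the coefficients coming from $\ell_n$.
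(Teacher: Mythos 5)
The paper cites this theorem directly from [MT] with no internal proof, so there is nothing in the paper itself to compare your proposal against. Your sketch does reproduce the strategy of the original Milnor--Thurston argument: $\ell_n^{1/n}\to s$ for piecewise monotone interval maps, Pringsheim's theorem places a singularity of the positive-coefficient lap series at the real point $t=1/s$, and the Milnor--Thurston determinant identity transfers this singularity to a zero of $K_c$. Two points need tightening. First, to conclude $K_c\neq 0$ on all of $[0,1/s)$, not merely at $1/s$, you need the numerator $U$ nonvanishing on the whole interval; you assert only $U(1/s)\neq 0$. (In the unimodal case $U$ turns out to be an explicit nonvanishing factor, so this is fine, but the claim should be stated and checked.) Second, as you yourself observe, the identity $(1-t)L(t)K_c(t)=U(t)$ is the heart of the theorem --- it is the $1\times 1$ kneading-matrix computation in [MT] --- and stating it ``of the shape'' rather than deriving it leaves the proof incomplete at its center. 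Your scaffolding is correct; the work you defer is exactly the part that makes the theorem true.
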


If the critical point is periodic of period $p$, then the coefficients of $K_c(t)$ are 
periodic, so the function $K_c(t)$ can be written in the form 
$$K_c(t) = \frac{P(t)}{1 - t^p}$$
where $P(t)$ is a polynomial of degree $p-1$ with coefficients in $\{ \pm 1\}$. We shall call 
$P(t)$ the \emph{kneading polynomial} of $f_c$. A power series is \emph{admissible} if it is the kneading 
determinant of  some real quadratic polynomial. Similarly, a polynomial $P(t)$ of degree $n$ is admissible if the
power series expansion of $P(t)/(1-t^{n+1})$ is admissible.
Admissible power series can be characterized in terms of the action of the shift operator on its coefficients.
In order to recall the criterion, let us say that a formal power series $\phi(t)$ is \emph{positive} if its first 
non-zero coefficient is positive, and that two formal power series satisfy $\phi_1(t) < \phi_2(t)$ if 
$\phi_2(t) - \phi_1(t)$ is positive.
Moreover, the absolute value $|\phi(t)|$ of a formal power series will equal $\phi(t)$ is $\phi(t) \geq 0$ 
and $-\phi(t)$ if $\phi(t) < 0$. The admissibility criterion is the following.

\begin{theorem}[\cite{MT}, Theorem 12.1] \label{admthm}
Let $$\phi(t) := 1+  \sum_{k = 1}^\infty  \epsilon_k t^k$$
be a power series, with $\epsilon_k \in \{ \pm 1 \}$.
The power series $\phi(t)$ is admissible if and only if 
$$\phi(t) \leq |\sum_{k = n}^\infty \epsilon_k t^k|$$ 
for each $n \geq 1$.
\end{theorem}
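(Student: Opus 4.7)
The plan is to translate the admissibility inequality into a dynamical statement about the orbit of the critical value of $f_c$, and then to invoke monotonicity of the quadratic family in order to realize every admissible series.

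I would first set up an order-theoretic dictionary. To each point $x$ whose forward orbit avoids the critical point, associate the formal series
\begin{equation*}
\theta_x(t) := 1 + \sum_{k = 1}^\infty \eta_k(x) t^k.
\end{equation*}
A short manipulation from the definitions gives the shift identity
\begin{equation*}
\theta_{f(x)}(t) \;=\; \epsilon(x)\cdot\frac{\theta_x(t) - 1}{t},
\end{equation*}
so iteration of $f$ corresponds, up to a $\pm 1$ twist, to shifting the coefficients of $\theta_x$; this sign twist is precisely what is absorbed by the absolute value appearing in the theorem. The key lemma to establish is that the formal-series ordering on the $\theta_x$ agrees, modulo the twist, with the linear order on the real line; I would prove this by induction on the first index at which two series differ, keeping track of how often $f$ reverses orientation on the orbits of $x$ and $y$.

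For the forward direction, suppose $\phi = K_c$ for a real quadratic $f_c$. Since $c$ is the critical value of $f_c$, every forward iterate satisfies $f_c^n(c) \leq c$ on the invariant interval. The order-preserving dictionary then gives $\theta_{f_c^n(c)} \leq \theta_c$ up to the appropriate sign; iterating the shift identity to express $\theta_{f_c^n(c)}$ in terms of the tail $\sum_{k \geq n} \epsilon_k t^k$ and passing to formal absolute values translates the geometric inequality into exactly the admissibility inequality in the statement.

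The main obstacle is the converse: realizing every series $\phi$ satisfying the inequality as $K_c$ for some $c \in [-2, 1/4]$. My plan is an intermediate-value argument in parameter space. The assignment $c \mapsto K_c(t)$ is upper semicontinuous in the formal-series order and, crucially, \emph{monotone non-decreasing} in $c$; this is the content of the Milnor-Thurston monotonicity theorem for the real quadratic family. Given $\phi$ admissible, set $c_* := \sup\{c \in [-2,1/4] : K_c \leq \phi\}$; the admissibility inequality is then precisely what is needed to rule out both strict possibilities $K_{c_*} < \phi$ and $K_{c_*} > \phi$, forcing $K_{c_*} = \phi$. The hard part is the monotonicity input, which is by far the deepest ingredient; the combinatorial translation of the formal inequality into a dynamical constraint on orbits of $c$ is comparatively routine.
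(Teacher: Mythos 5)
The paper does not prove this statement: it is cited verbatim as \cite{MT}, Theorem~12.1, so there is no in-text argument to compare against. Your sketch is, in outline, the standard route Milnor and Thurston themselves take, and the key reductions you name are the right ones. The shift identity $\theta_{f(x)}(t) = \epsilon(x)\,(\theta_x(t)-1)/t$ is correct and is precisely what turns the tail sums $\sum_{k\geq n}\epsilon_k t^k$ in the admissibility inequality into the kneading series of the forward orbit of the critical value; and you correctly identify that the converse (realization) direction rests on monotonicity of the quadratic family, which is the genuinely deep input.

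A few points in the sketch deserve more care. First, there is a sign slip: for $f_c(z)=z^2+c$ the critical value $c$ is the \emph{minimum} of $f_c$ on the invariant interval, so the orbit constraint is $f_c^n(c)\geq c$, not $\leq c$; the $\leq$ in the admissibility inequality of the theorem is exactly the statement that $\theta_c$ is extremal (smallest up to the $\pm 1$ twist), so the two signs need to be reconciled consistently. Second, the claim that the formal-series order on $\theta_x$ tracks the real order on $x$, modulo the orientation twist, is stated as something to be ``proved by induction,'' but it is in fact the central combinatorial lemma (in \cite{MT} this is the content of the lemma comparing $\eta$-coordinates, and it is where most of the work of their \S 12 lives); asserting it without detail leaves the forward direction essentially as hard as the original theorem. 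Third, in the intermediate-value argument for $c_* := \sup\{c : K_c \leq \phi\}$ you should address the discontinuity of $c\mapsto K_c$ at superattracting parameters: the left and right limits differ (as the paper notes, $\lim_{s\to c^\pm}\Phi(s) = P(t)/(1\mp t^p)$), and the definition $K_c := \lim_{x\to c^+} K(x,t)$ is one-sided, so the sup/inf argument needs to pick the correct side to close. None of these is a fatal flaw — they are gaps in a sketch of a theorem that is indeed being cited rather than reproved — but as written the proposal outsources both of its two main claims (the order correspondence and monotonicity) to ``routine'' or ``deep'' without supplying either.
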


In particular, the theorem immediately implies the following sufficient condition, 
which we will use later.

\begin{corollary} \label{admissible}
Let $$\phi(t) := 1 + \sum_{k = 1}^\infty  \epsilon_k t^k$$
be a power series, with $\epsilon_k \in \{ \pm 1 \}$ and $\epsilon_1 = -1$.
Define the \emph{initial runlength} $N$ to be the number of consecutive equal 
symbols at the beginning of the sequence of coefficients:
$$N := \max \{ k \geq 1 : \epsilon_1 = \epsilon_{2} = \dots = \epsilon_{k} \}$$  
and the \emph{maximal runlength} $M$ to be the cardinality of the largest sequence 
of consecutive equal symbols, excluding the first one:
$$M := \max \{ k \geq 1 : \epsilon_n = \epsilon_{n+1} = \dots = \epsilon_{n+k-1} \textup{ for some }n \geq N+1\}.$$
If $N > M$, then the power series $\phi(t)$ is admissible. 
\end{corollary}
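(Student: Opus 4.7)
The plan is to apply the admissibility criterion of Theorem \ref{admthm} shift by shift. Fix $n \geq 1$ and set $\sigma_n(t) := \sum_{k \geq n} \epsilon_k t^k$. The leading term of $|\sigma_n(t)|$ sits at degree $n$ with coefficient $+1$; stripping this common leading factor reduces the required inequality to a comparison of the coefficient sequence $(\epsilon_k)_{k \geq 1}$ of $\phi$ against the sign-normalized shift $(\epsilon_n \epsilon_{n+k})_{k \geq 1}$. Let $k_0 \geq 1$ be the smallest index at which these two sequences disagree (if no such index exists, the two series coincide and there is nothing to prove). The admissibility inequality then boils down to the single statement $\epsilon_n \epsilon_{n+k_0} > \epsilon_{k_0}$.

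The heart of the proof is the bound $k_0 \leq N$. Once this is known, the initial-runlength hypothesis forces $\epsilon_{k_0} = -1$, and then the disagreement at $k_0$ can only mean $\epsilon_n \epsilon_{n+k_0} = +1$, giving the desired inequality with coefficient $+2$. To establish $k_0 \leq N$ I argue by contradiction. If $k_0 > N$, then $\epsilon_n \epsilon_{n+k} = \epsilon_k = -1$ for every $k = 1, \dots, N$, equivalently $\epsilon_{n+k} = -\epsilon_n$, so the entries $\epsilon_{n+1}, \epsilon_{n+2}, \dots, \epsilon_{n+N}$ form a block of $N$ consecutive equal signs. When $n \geq N$ this block lies entirely at indices $\geq N+1$, hence constitutes a ``later'' run of length $\geq N > M$, contradicting the hypothesis on $M$. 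When $n < N$ one has $n+1 \leq N$, so $\epsilon_n = \epsilon_{n+1} = -1$ by the initial run; but then $\epsilon_n \epsilon_{n+1} = +1 \neq -1 = \epsilon_1$ already at $k = 1$, so in fact $k_0 = 1 \leq N$ directly.

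I do not expect a serious obstacle: the argument is essentially a direct application of the admissibility criterion combined with runlength bookkeeping. The only point of care is the correct interpretation of the inequality in Theorem \ref{admthm} as a comparison of the kneading sequence with its (normalized) shifts, after which the case split at the boundary of the initial block of $-1$'s just needs to be handled cleanly.
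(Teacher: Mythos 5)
Your proof is correct. The paper states this corollary as ``immediate'' from Theorem \ref{admthm} and gives no further details, and your argument---normalizing each shifted series by its leading power of $t$, locating the first disagreement $k_0$, proving $k_0\leq N$ by the case split $n\geq N$ (which would manufacture a run of length $\geq N$ starting past index $N+1$, violating $M<N$) versus $n<N$ (which forces a disagreement already at $k=1$), and then reading off the required sign from $\epsilon_{k_0}=-1$---is exactly the verification the author leaves implicit.
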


Moreover, let us recall that kneading determinants behave nicely under tuning operations. 
Indeed, let $f_{c_0}$ be a superattracting real polynomial of period $p$ and kneading determinant
$K_{c_0}(t) = P_{c_0}(t)/(1-t^p)$, and $f_{c_1}$ another real polynomial. 
Then their Douady-Hubbard tuning $f_{c_2} = f_{c_0} \star f_{c_1}$ has kneading polynomial (see \cite{Do1})
\begin{equation} \label{eq:renorm}
P_{c_2}(t) = P_{c_1}(t^p)P_{c_0}(t). 
\end{equation}

Let us conclude the section with a few basic observations on the geometry of $\Sigma$.
 
\begin{lemma} \label{basicincl}
We have the inclusion
$$\Sigma \subseteq \{ z \ : \ 1/2 \leq |z| \leq 2 \}.$$
\end{lemma}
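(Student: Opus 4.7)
The plan is to establish the two inclusions $\Sigma \subseteq \{|z| \leq 2\}$ and $\Sigma \subseteq \{|z| \geq 1/2\}$ separately.

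For the outer bound, I would invoke the standard fact that any continuous interval map of degree two has topological entropy at most $\log 2$, so $s(f_c) \leq 2$ for every $c$. Since $s(f_c)$ is a weak Perron number (as noted in the introduction), each of its Galois conjugates has modulus at most $s(f_c) \leq 2$, and this inequality is preserved when passing to the closure.

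For the inner bound, I would use the kneading polynomial. Fix a superattracting parameter $c$ of period $p$. The case $s(f_c) = 1$ contributes only the Galois conjugate $1$, so we may assume $s := s(f_c) > 1$; then, combining the factorisation $K_c(t) = P(t)/(1-t^p)$ with Theorem \ref{MTknead}, the reciprocal polynomial
$$Q(t) := t^{p-1} P(1/t)$$
has coefficients in $\{\pm 1\}$ and satisfies $Q(s) = 0$. Consequently every Galois conjugate $\lambda$ of $s$ is also a root of $Q$. Writing $Q(t) = \sum_{j=0}^{p-1} b_j t^j$ with $b_j \in \{\pm 1\}$, if $|\lambda| < 1$ then isolating the constant term in $Q(\lambda) = 0$ and applying the triangle inequality yields
$$1 = |b_0| \leq \sum_{j=1}^{p-1} |\lambda|^j < \frac{|\lambda|}{1-|\lambda|},$$
whence $|\lambda| > 1/2$. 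The complementary case $|\lambda| \geq 1$ satisfies the bound trivially, and passing to the closure finishes the second inclusion.

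Both steps are elementary once the kneading polynomial is available: the essential ingredient is Theorem \ref{MTknead}, which produces a $\pm 1$ polynomial vanishing at $1/s$, after which the lower bound is the standard estimate on the moduli of roots of polynomials with uniformly bounded coefficients and nonzero constant term. I do not foresee any real obstacle; the only care needed is dealing with the degenerate $s = 1$ case separately.
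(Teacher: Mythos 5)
Your proof is correct, but it splits into a hybrid: the inner bound $|z|\geq 1/2$ is essentially the paper's argument (the triangle-inequality estimate on a $\pm 1$ polynomial vanishing at $\lambda$), while the outer bound $|z|\leq 2$ is obtained by a genuinely different route. The paper handles both bounds with one uniform algebraic observation: since the reciprocal polynomial $Q(t)=t^{p-1}P(1/t)$ is again a polynomial with unit-modulus coefficients, the lower bound $|t|\geq 1/2$ applies to its roots as well, and because the roots of $Q$ are the reciprocals of the roots of $P$, this is the same as the upper bound $|t|\leq 2$ on roots of $P$. You instead invoke the dynamical fact that a degree-two interval map has entropy at most $\log 2$, so $s(f_c)\leq 2$, together with the weak Perron property to bound all Galois conjugates by $s(f_c)$. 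This works, but it imports two nontrivial external facts where a one-line symmetry argument (the $\pm 1$ polynomial class is closed under reciprocation) suffices; the paper's version also has the virtue of applying equally to all roots of $K_c$, i.e.\ to the larger set $\Sigma_{kn}$, not just to Galois conjugates of the leading eigenvalue. Your treatment of the degenerate case $s(f_c)=1$ and the passage to the closure are both handled correctly.
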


\begin{proof}
Let $P(t) = \sum_{k = 0}^n \epsilon_k t^k$, with $|\epsilon_k| = 1$. Then 
if $|t| < 1/2$ we have 
$$|P(t)| \geq 1 - \sum_{k = 1}^n 2^{-n} > 0$$  
so there is no zero inside the disk of radius $1/2$. Taking the reciprocal polynomial 
proves that there is no root of modulus larger than $2$. The claim then follows 
because all kneading polynomials for superattracting maps have coefficients of unit modulus.
\end{proof}

Finally, one of the main results of Milnor and Thurston's kneading theory is the following 
monotonicity of entropy.

\begin{theorem}[\cite{MT}]
The growth rate $s(f_c)$ of $f_c(z) := z^2 + c$ is a continuous, decreasing function 
of the parameter $c \in [-2, 1/4]$.
\end{theorem}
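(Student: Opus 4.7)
The plan is to derive monotonicity and continuity of $s(f_c)$ from the kneading-theoretic description given by Theorem \ref{MTknead}: when $s(f_c) > 1$, the reciprocal $1/s(f_c)$ is the smallest positive real zero of $K_c(t)$, and $K_c > 0$ on $[0, 1/s(f_c))$ with $K_c(0) = 1$. Since $s(f_c)$ takes values in $[1,2]$ with boundary values $s(f_{-2}) = 2$ and $s(f_{1/4}) = 1$, the problem reduces to showing that the smallest positive zero of $K_c(t)$ moves continuously and in the correct (non-decreasing) direction with $c$ on the sublocus where $s(f_c) > 1$; the regime $s(f_c) = 1$ is then handled trivially.

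For the monotonicity statement, the core step is to show that the kneading invariant $\eta(c) = (\eta_k(c))_{k \geq 1}$ is monotone in $c$ with respect to the formal power series order defined before Theorem \ref{admthm}, and then to translate this into a pointwise functional inequality on $K_c(t)$. Given $c_1 < c_2$, I would compare the orbits $\{f_{c_i}^k(c_i)\}_{k \geq 0}$ term by term and locate the first index $n$ at which the signs $\epsilon(f_{c_i}^{n-1}(c_i))$ differ. Using that $f_c(z)$ is increasing in $c$ for fixed $z$, and that each iterate of $f_c$ reverses orientation on the negative branch, the parity of the sign discrepancy at index $n$ is forced to be exactly the one giving $K_{c_1}(t) \geq K_{c_2}(t)$ in the formal order. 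The delicate step is then promoting this to a genuine pointwise inequality of real-valued functions on $(0, \min(1/s(f_{c_1}), 1/s(f_{c_2})))$; this relies on the tail-domination encoded in the admissibility criterion (Theorem \ref{admthm}), which prevents the leading coefficient of the difference from being overwhelmed by the tail. Granted the pointwise inequality, evaluating at $t_2 := 1/s(f_{c_2})$ gives $K_{c_1}(t_2) \geq 0$, and strict positivity of $K_{c_1}$ on $[0, 1/s(f_{c_1}))$ forces $1/s(f_{c_1}) \leq t_2$, i.e. $s(f_{c_1}) \geq s(f_{c_2})$.

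For continuity, I would argue that the kneading series $K_c$ depends continuously on $c$ in the space of analytic functions on the unit disk, the one-sided jumps at superattracting parameters being absorbed into the one-sided limit in the definition of $K_c$. Continuity of the smallest positive zero of $K_c$ then follows from Hurwitz's theorem, using the uniform lower bound $|t| \geq 1/2$ from Lemma \ref{basicincl} to prevent the zero from escaping toward $0$. The main obstacle is the passage from formal-series ordering to pointwise functional ordering in the monotonicity step: naively a formal inequality of series does not imply a pointwise inequality of the sums, so one must exploit the specific combinatorial structure of admissible sign sequences to control the tail of $K_{c_1} - K_{c_2}$ and thereby rule out sign reversals on the interval where the comparison is needed. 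This is the point at which the machinery of Milnor and Thurston is genuinely being used, and it is also where one must argue carefully at parameters whose critical orbits pass through $0$.
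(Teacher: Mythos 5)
The paper does not prove this statement; it is imported from Milnor--Thurston, and in fact the monotonicity of entropy for the real quadratic family is one of the deeper results of that theory. Your proposal is therefore being measured against the known proofs rather than against anything in the present paper.

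There is a genuine gap, and it is exactly at the step you flag as delicate. You propose to show that the kneading invariant $\eta(c)$ is monotone in $c$ for the formal power series order by comparing the critical orbits $\{f_{c_i}^k(c_i)\}_{k\ge 0}$ term by term and arguing that, at the first index $n$ where the signs disagree, the disagreement is forced to have a definite parity. This cannot work as stated: the iterates $x_k(c) := f_c^k(c)$ are polynomials in $c$ whose derivatives change sign on $[-2,1/4]$ (already $x_1(c) = c^2+c$ has $x_1'(c)=2c+1$, which is negative for $c<-1/2$), so there is no uniform order relationship between $x_k(c_1)$ and $x_k(c_2)$ from which one could read off the sign of the first discrepancy. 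More structurally, the monotonicity of $c \mapsto \eta(c)$ \emph{is} the monotonicity theorem, modulo the elementary kneading-theoretic fact that entropy is a monotone function of the kneading invariant. What makes it hard is the possibility of the kneading invariant ``backtracking'' as $c$ varies, i.e.\ of $x_n(c)$ crossing $0$ transversally in the wrong direction, and ruling this out requires genuinely complex-analytic input: rigidity of postcritically finite polynomials (Thurston rigidity), or equivalently the Douady--Hubbard parametrization of hyperbolic components, or Sullivan's quasiconformal argument, or Tsujii's transversality of the critical orbit relation. None of this can be replaced by bookkeeping on signs.

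The auxiliary claim that a formal-order inequality $K_{c_1}(t) \ge K_{c_2}(t)$ becomes a pointwise inequality on $(0, \min(1/s_1, 1/s_2))$ ``by tail-domination from admissibility'' is also unsubstantiated: Theorem~\ref{admthm} constrains each individual $K_c$, not the difference $K_{c_1} - K_{c_2}$, and the latter has coefficients in $\{0,\pm 2\}$ with no a priori control. The correct kneading-theoretic lemma is weaker and more careful (one compares the two series only at the smallest zero, and the comparison is sensitive to which partial sums are nonnegative there), and in any case it only reduces the problem to the monotonicity of $\eta(c)$, which is the hard part you have not supplied. Your continuity argument via Hurwitz, by contrast, is essentially sound and matches the continuity argument the paper does make (for $\textup{Tr}$); the only thing to add there is a separate treatment of the endpoint regime $s(f_c)=1$, where $1/s(f_c)$ is not an interior zero of $K_c$ on the open disk.
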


Since $s(f_{1/4}) = 1$ and $s(f_{-2}) = 2$, by the density of hyperbolic components on the real 
line we get immediately the

\begin{corollary} \label{contains12}
The set $\Sigma$ contains the real interval $[1, 2]$. 
\end{corollary}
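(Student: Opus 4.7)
The plan is to combine three ingredients already stated in the excerpt: the continuous, monotone dependence of $s(f_c)$ on $c \in [-2, 1/4]$, the boundary values $s(f_{1/4}) = 1$ and $s(f_{-2}) = 2$, and the density of real hyperbolic components (i.e.\ of $M_0 \cap \mathbb{R}$) in $[-2, 1/4]$.

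First I would observe that for any algebraic integer $\lambda$ one has $\lambda \in Gal(\lambda)$, since $\lambda$ is a root of its own minimal polynomial. Applying this to $\lambda = s(f_c)$ gives the inclusion
\[
\{ s(f_c) : c \in M_0 \cap \mathbb{R}\} \subseteq \bigcup_{c \in M_0 \cap \mathbb{R}} Gal(s(f_c)) \subseteq \Sigma.
\]

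Next I would use continuity and monotonicity of $c \mapsto s(f_c)$ on $[-2, 1/4]$ together with $s(f_{-2}) = 2$ and $s(f_{1/4}) = 1$ to conclude, by the intermediate value theorem, that the image of $[-2, 1/4]$ under $c \mapsto s(f_c)$ is exactly the interval $[1, 2]$. Combining this with the density of $M_0 \cap \mathbb{R}$ in $[-2, 1/4]$ and the continuity of the entropy, the set $\{ s(f_c) : c \in M_0 \cap \mathbb{R}\}$ is dense in $[1, 2]$. Since $\Sigma$ is closed by definition, taking closures yields $[1,2] \subseteq \Sigma$.

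There is essentially no obstacle beyond invoking the quoted theorems: the only ingredient not proved in the excerpt is the density of real superattracting parameters, which is a classical fact about the real Mandelbrot set and can be cited directly. The argument is thus a two-line deduction from the monotonicity theorem and the definition of $\Sigma$, which matches the author's own phrasing \emph{we get immediately the} preceding the corollary.
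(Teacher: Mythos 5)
Your proof is correct and takes essentially the same route as the paper: monotonicity and continuity of $c \mapsto s(f_c)$, the boundary values $s(f_{1/4}) = 1$ and $s(f_{-2}) = 2$, density of real superattracting parameters, and then closure of $\Sigma$. The only thing you add is explicitly spelling out that $s(f_c) \in Gal(s(f_c))$, which the paper leaves implicit.
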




\section{Outside the unit disk : the kneading spectrum} \label{section:kns}

For the sake of exposition, let us first analyze a set which is related to $\Sigma$.
Let us define the \emph{kneading spectrum} $\Sigma_{kn}$ for real unimodal maps 
as the set of (inverses of) all zeros of kneading determinants: more precisely, we set

$$\Sigma_{kn} := \overline{\{ s \in \mathbb{C}^* \ : \ K_c(1/s) = 0 \textup{ for some superattracting }f_c\}}.$$
Since the growth rates are zeros of the reciprocals of kneading polynomials, then we have the inclusion 
$$\Sigma \subseteq \Sigma_{kn}.$$
However, it is not always true that kneading polynomials are irreducible (indeed, they are not inside 
small copies of the Mandelbrot set, see section \ref{section:irredrenorm}), so it is not obvious that the two sets are the same. 

In this section, we shall prove the following result.

\begin{proposition} \label{outside}
The set $\Sigma_{kn} \cap \{z : |z| \geq 1\}$ is connected, and the set 
$\Sigma_{kn} \cap \{z : |z| > 1\}$ is locally connected.
\end{proposition}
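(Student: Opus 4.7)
The strategy follows the Bousch/Odlyzko-Poonen framework. Passing to the reciprocal coordinate $t = 1/z$, the proposition becomes a statement about zeros in $\overline{\mathbb{D}}$ of admissible $\pm 1$ power series $\phi(t) = 1 + \sum_{k \geq 1} \epsilon_k t^k$: their closure should be connected, and their interior portion $\{|t| < 1\}$ should be locally connected. Since superattracting parameters are dense in $[-2, 1/4]$, we may let $\epsilon$ range over the compact space $\mathcal{A} \subset \{\pm 1\}^{\mathbb{N}}$ of admissible sequences characterized by Theorem \ref{admthm}.

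For connectivity, I would show that every zero $t_0$ of an admissible $\phi_\epsilon$ can be joined to the real spine $[1/2, 1]$ (the image of $[1, 2] \subset \Sigma_{kn}$ from Corollary \ref{contains12}) by an arc of admissible zeros. The arc is built by approximating the continuous family $\phi_s(t) = (1-s)\phi_\epsilon(t) + s\cdot \psi(t)$, where $\psi$ is an admissible series whose root lies on the spine, by admissible $\pm 1$ series of growing degree. The roots of $\phi_s$ near $t_0$ trace a continuous path via the implicit function theorem; at each $s$, a truncate-and-extend construction produces an admissible $\pm 1$ series with a nearby root, using Corollary \ref{admissible} to guarantee admissibility via a long initial run.

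For local connectivity on $\{|t| < 1\}$, I would apply the Bousch greedy algorithm: given a target $t_1$ close to a known zero $t_0$, choose each $\epsilon_k$ successively to minimize the current partial sum $|\phi_k(t_1)|$. Because $|t_1| < 1$, the tail decays geometrically and the two available choices $\pm 1$ at each step provide enough flexibility to drive the full series to $0$ at $t_1$ with prescribed accuracy. By making the choices depend continuously on $t_1$ (the greedy map is locally Lipschitz away from ties) one obtains a continuous local selection of admissible zeros, hence local connectivity.

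The principal obstacle is the admissibility constraint, which is a strong global shift-invariant condition rather than something checkable on finite pieces: unlike the unconstrained $\pm 1$ case of Odlyzko-Poonen, perturbing a single coefficient can destroy admissibility arbitrarily far ahead. The workaround in both parts of the argument is Corollary \ref{admissible}: by first arranging a long initial run $N$ strictly greater than any subsequent run $M$, admissibility becomes a \emph{local} condition on the head of the sequence, decoupling it from the tail, which is then free for the Bousch-type analytic perturbations. Balancing the length $N$ needed to secure admissibility against the flexibility required for accurate targeting will be the delicate quantitative point.
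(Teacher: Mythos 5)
Your proposal takes a genuinely different route from the paper, and it has significant gaps that would prevent it from going through as sketched.

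The paper's proof is far more economical because it keeps the real parameter $c\in[-2,1/4]$ as the base of the fibration: the map $c\mapsto K_c(t)$ is continuous into $\mathcal{O}(\mathbb{D})$ (this requires a small argument at superattracting parameters, where the left and right limits of $K_c$ differ only by a factor of $(1\pm t^p)$ and hence have the same zero set in $\mathbb{D}$), Rouch\'e's theorem then makes the trace map $c\mapsto\textup{Tr}(c)$ continuous into $Com\,\widetilde{\mathbb{D}}$, and Bousch's general topological lemma (Lemma~\ref{lclemma}) does the rest: connectedness of the union follows because the interval is connected and $\textup{Tr}(0)=\{\infty\}$ is connected, and local connectedness follows because each fiber is discrete in $\mathbb{D}$. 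By contrast, the moment you replace $[-2,1/4]$ by the space $\mathcal{A}\subset\{\pm1\}^{\mathbb{N}}$ of admissible coefficient sequences, you have traded a connected base for a totally disconnected one, and connectedness of the union of fibers no longer comes for free. That is exactly why your connectivity argument has to do all the work by hand via the interpolation $\phi_s=(1-s)\phi_\epsilon+s\psi$, and that step is not justified: $\phi_s$ is not a $\pm1$ series for $0<s<1$, and the truncate-and-extend step does not obviously produce a \emph{continuous} family of admissible zeros rather than just a sequence approximating a single root.

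The local connectivity sketch has a more specific problem. You invoke Corollary~\ref{admissible} to claim that a long initial run makes admissibility a ``local condition on the head, decoupled from the tail.'' That is not what the corollary says: the hypothesis $N>M$ requires the initial run $N$ to exceed the maximal run length $M$ \emph{taken over the entire tail}. A greedy choice of $\epsilon_k$ to minimize $|\phi_k(t_1)|$ gives no a priori bound on the run lengths it produces, so admissibility is not secured. (In the paper, the analogous bounded-run trick is used in Lemma~\ref{boundarycrit}, but there the $\pm1$ choices are grouped into blocks of a fixed length $m$ that are explicitly forbidden from being all equal, which caps the run lengths structurally rather than hoping the greedy algorithm cooperates.) You also wave at the issue of ties in the greedy selection; making this continuous is one of the genuinely delicate points in Bousch's original work, and you would need to produce a path-connected quotient of $\mathcal{A}$ parameterizing the zeros, not just a local selection defined off a measure-zero set. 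None of these are fatal to the spirit of the approach -- it is roughly how Bousch treats $\Sigma_{\pm1}$ -- but they are not details, they are the content, and the paper's fibered-over-the-interval argument sidesteps all of them.
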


Let us first observe that the set $\Sigma_{kn}$ has the remarkable property of 
being closed under taking $n^{th}$-roots, and this is precisely because of renormalization. 

\begin{lemma} \label{nth-roots}
If $s \in \mathbb{C}^*$ belongs to $\Sigma_{kn}$ and $t^n = s$ for some $t \in \mathbb{C}^*$ and $n$ a positive integer, 
then $t$ also belongs to $\Sigma_{kn}$. As a consequence, $\Sigma_{kn}$ contains the unit circle $S^1$. 
\end{lemma}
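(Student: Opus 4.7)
The plan is to exploit the tuning identity (\ref{eq:renorm}): tuning a period-$n$ superattracting map $f_{c_0}$ with a superattracting map $f_{c_1}$ produces a kneading polynomial of the form $P_{c_1}(t^n) P_{c_0}(t)$, whose zeros include every $n$-th root of every zero of $P_{c_1}$. In this way, starting from any element of $\Sigma_{kn}$, I can realize its $n$-th roots as zeros of a new, legitimate kneading polynomial.

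More concretely, for each $n \geq 1$ I would fix once and for all a real superattracting quadratic polynomial $f_{c_0}$ of period exactly $n$ (such parameters exist on the real axis for every $n$, by the standard period structure of the real quadratic family on $[-2,1/4]$), with kneading polynomial $P_{c_0}$ of degree $n-1$. Given $s$ in the pre-closure set of $\Sigma_{kn}$, say $P_{c_1}(1/s) = 0$ for some superattracting $f_{c_1}$ of period $q$, form the Douady--Hubbard tuning $f_{c_2} := f_{c_0} \star f_{c_1}$, which is superattracting of period $nq$; by (\ref{eq:renorm}),
$$P_{c_2}(t) = P_{c_1}(t^n) P_{c_0}(t).$$
Any $n$-th root $t$ of $s$ then satisfies $P_{c_2}(1/t) = P_{c_1}(1/s) P_{c_0}(1/t) = 0$. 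Provided $1/t$ is not cancelled by the denominator $1 - t^{nq}$ of $K_{c_2}$---which holds for instance whenever $|s| \neq 1$---we obtain $K_{c_2}(1/t) = 0$, and hence $t \in \Sigma_{kn}$.

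The remaining cases ($|s| = 1$, or exceptional cancellation at roots of unity) are dispatched by a closure argument: since the pre-closure set is dense in $\Sigma_{kn}$ and the $n$-th root has continuous local branches away from $0$, one approximates $s$ by a sequence $s_k \to s$ with $|s_k| \neq 1$, lifts to $t_k \to t$ with $t_k^n = s_k$ on a chosen branch, applies the previous step to each $t_k$, and invokes closedness of $\Sigma_{kn}$. For the consequence, Corollary \ref{contains12} gives $2 \in \Sigma_{kn}$, so by the first part the points $\{2^{1/n} e^{2\pi i k/n} : n \geq 1,\ 0 \leq k < n\}$ all lie in $\Sigma_{kn}$; as $n \to \infty$ these become dense on the unit circle, which is therefore contained in the closed set $\Sigma_{kn}$. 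The only mildly delicate step is the possible cancellation in the denominator of $K_{c_2}$ at roots of unity, and this is washed out by density and closure; the real content is just the tuning identity together with the existence of real superattracting components of every period.
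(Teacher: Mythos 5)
Your proof is correct and uses the same key idea as the paper: tune by a real superattracting map of period $n$ and apply the tuning identity \eqref{eq:renorm} to produce a kneading determinant vanishing at $1/t$, then pass to the closure. The paper's version is more terse --- it writes $K_{c_2}(t^{-1}) = K_c(t^{-n})P_{c_1}(t^{-1}) = 0$ without flagging the possible cancellation at roots of unity, and leaves the density argument for $S^1$ implicit --- so your extra care fills in the same argument rather than taking a different route.
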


\begin{proof}
Let $s$ be such that $K_c(1/s) =0$ with $f_c(z)$ a superattracting real quadratic polynomial, and let $t \in \mathbb{C}$ 
such that $t^n = s$. Now let us pick $f_{c_1}$ a superattracting real quadratic polynomial with critical orbit of period $n$, 
and construct the tuned map $f_{c_2} := f_{c_1} \star f_{c}$. Then by equation \eqref{eq:renorm} we have that 
$K_{c_2}(z) = K_{c}(z^n)P_{c_1}(z)$, and by evaluating it for $z = t^{-1}$ we get $K_{c_2}(t^{-1}) =  K_{c}(t^{-n})P_{c_1}(t^{-1}) = 0$
since $t^{-n} = s^{-1}$, hence $t$ also belongs to $\Sigma_{kn}$. The claim then follows by taking closures. 
\end{proof}

Let us first observe that, since periodic kneading sequences are dense in the set of 
admissible kneading sequences, we can drop the closure if we admit kneading determinants 
of all real maps: that is, we have the identity
$$\Sigma_{kn} \cap \mathbb{E} = \{ s \in \mathbb{E} \ : \ K_c(1/s) = 0 \textup{ for some }c \in [-2, 1/4]\}.$$

The fundamental idea then is that we can associate to each parameter $c$ a discrete subset of the disk, 
namely the set of zeros of $K_c(t)$, in a continuous way, and we are interested in studying the union of all such sets. 
It is thus natural to consider the three-dimensional set 
\begin{equation} \label{sigmahat}
\widehat{\Sigma} := \{ (c, z) \in [-2, 1/4] \times \mathbb{D} \ s.t. \ K_c(z) = 0 \}
\end{equation}
which ``fibers'' over $[-2, 1/4]$ by taking the projection $\pi_1$ onto the first coordinate, and 
each fiber of $\pi_1$ is the set of zeros of $K_c(t)$. 


We will actually prove that $\widehat{\Sigma}$ is connected and locally connected: 
the Proposition then follows since the set $\Sigma_{kn} \cap \mathbb{E}$ is just obtained by taking 
the projection of $\widehat{\Sigma}$ onto the second coordinate, and then inverting through the unit circle via the map $z \mapsto 1/z$.

Let $Com \ V$ denote the space of compact subsets of a compact metric space $V$, with the Hausdorff topology.
Moreover, let $\widetilde{\mathbb{D}} := \mathbb{D} \cup \{ \infty \}$ be the one-point compactification of the 
unit disk. 
If $f$ is a holomorphic function in the unit disk, the \emph{trace} of $f$ is defined as the set of zeros 
of $f$: 
$$\textup{tr }f := \{ z \in \mathbb{D} \ : \ f(z) = 0 \} \cup \{ \infty\}.$$
By Rouch\'e's theorem, the map $\textup{tr }f : \mathcal{O}(\mathbb{D}) \to Com \ \widetilde{\mathbb{D}}$ is continuous
at $f$ as long as $f$ is not identicallly $0$. 
Let us now verify continuity for kneading determinants:
\begin{proposition}
The map 
$\textup{Tr}: [-2, 1/4] \to Com \ \widetilde{\mathbb{D}}$ given by 
$$\textup{Tr}(c) := \{ z \in \mathbb{D} \ : \ K_c(z) = 0 \} \cup \{\infty \}$$
is continuous in the Hausdorff topology.
\end{proposition}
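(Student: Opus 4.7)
The plan is to reduce to the Rouché-based continuity statement recalled just before: the trace map $\textup{tr}: \mathcal{O}(\mathbb{D}) \setminus \{0\} \to \textup{Com}\,\widetilde{\mathbb{D}}$ is continuous. So it suffices to analyze how the power series $K_c \in \mathcal{O}(\mathbb{D})$ varies with $c$; note that full continuity of $c \mapsto K_c$ is not needed, only that any jump preserves the zero set inside $\mathbb{D}$.

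First I will verify continuity at any non-superattracting parameter $c_0$. Since $f_{c_0}^j(c_0) \neq 0$ for all $j \geq 1$, continuity of $c \mapsto f_c^j(c)$ implies that for each $N$ there is a neighborhood of $c_0$ on which $\eta_j(c) = \eta_j(c_0)$ for all $j \leq N$ simultaneously. Combined with the uniform bound $|\eta_j| = 1$, this yields
\[
|K_c(z) - K_{c_0}(z)| \leq \frac{2 r^{N+1}}{1 - r} \quad \text{for } |z| \leq r < 1,
\]
and hence continuity of $c \mapsto K_c$ at $c_0$ in $\mathcal{O}(\mathbb{D})$; Rouché then gives continuity of $\textup{Tr}$ at $c_0$.

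The delicate case is a superattracting parameter $c_0$ of period $p$. Right-continuity of $c \mapsto K_c$ at $c_0$ is automatic from the definition $K_{c_0} := \lim_{c \to c_0^+} K(c,t)$. I will show that the left limit $K_{c_0}^-(t) := \lim_{c \to c_0^-} K(c, t)$ also exists in $\mathcal{O}(\mathbb{D})$, by analyzing the critical orbit of $c = c_0 - \delta$ for small $\delta > 0$: it shadows the $p$-periodic orbit of $0$, giving approximate period-$p$ recurrences $\epsilon(f_c^{k+p}(c)) = \epsilon(f_c^k(c))$ for the sign symbols, with the only ambiguous entry sitting at index $p-1$ where $f_{c_0}^{p-1}(c_0) = 0$ and $\epsilon(f_c^{p-1}(c))$ flips across $c_0$. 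Passing this near-periodicity to the limit in the defining series produces an explicit formula
\[
K_{c_0}^\pm(t) = \frac{R(t)}{1 - \rho^\pm t^p},
\]
where $R(t) = \sum_{k=0}^{p-1} \eta_k(c_0) t^k$ and $\rho^+, \rho^- \in \{\pm 1\}$ are opposite signs.

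The crucial observation is then that the two denominators $1 \mp \rho^+ t^p$ have all their zeros on the unit circle, so they do not vanish in $\mathbb{D}$; hence $K_{c_0}^+$ and $K_{c_0}^-$ share the same zero set inside $\mathbb{D}$, namely the zero set of $R(t)$. Consequently, the left limit of $\textup{Tr}$ at $c_0$, computed by applying Rouché's theorem to $K_{c_0}^-$, coincides with $\textup{Tr}(c_0) = \textup{tr}(K_{c_0}^+) \cup \{\infty\}$, which combined with right-continuity yields continuity of $\textup{Tr}$ at $c_0$. I expect the main obstacle to be the derivation of the formula for $K_{c_0}^\pm$ and the verification that the jump in $K_c$ across $c_0$ is by a factor non-vanishing in $\mathbb{D}$; once that is established, the rest is a routine packaging via Rouché.
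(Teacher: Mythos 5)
Your proposal is correct and follows essentially the same route as the paper's proof: split into the non-superattracting case (where local-uniform continuity of $c \mapsto K_c$ follows from eventual local constancy of the kneading symbols together with the uniform bound $|\eta_k|=1$) and the superattracting case, where one computes the one-sided limits $K_{c_0}^{\pm}(t) = P(t)/(1 \mp t^p)$ and observes that they differ only by a factor nonvanishing in $\mathbb{D}$, so the trace still varies continuously through $c_0$. Your write-up just supplies more of the bookkeeping that the paper's terse proof leaves implicit (the propagation of the single ambiguous sign at index $p-1$ and why the two denominators have the same zero set inside $\mathbb{D}$).
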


\begin{proof}
Let us consider the map $\Phi : [-2, 1/4] \to \mathcal{O}(\mathbb{D})$ given by $\Phi(c) := K_c(t)$. 
If the critical point is not periodic for $f_c$, then $\Phi$ is continuous at $c$ because it is continuous in 
the topology of formal power series. 
Otherwise, if the critical point has period $p$, we have 
$$\lim_{s \to c^{+}} \Phi(s) = \frac{P(t)}{1-t^p} \qquad \lim_{s \to c^{-}} \Phi(s) = \frac{P(t)}{1+t^p}$$
where $P(t)$ is a polynomial of degree $p-1$; thus the two limit functions have the same zero sets inside the unit disk,
so the map is still continuous.
\end{proof}

\begin{lemma}[\cite{B2}] \label{lclemma}
Let $\Lambda$ a topological space and $V$ a compact metric space. Let $t : \Lambda \to Com \ V$ be a continuous map, 
and denote as $t(\Lambda)$ the union 
$$t(\Lambda) := \bigcup_{\lambda \in \Lambda} t(\lambda).$$
Then the following are true:
\begin{enumerate}
 \item suppose $\Lambda$ is connected and there exists $\lambda_0 \in \Lambda$ such that 
$t(\lambda_0)$ is connected; then $t(\Lambda)$ is connected.
\item Suppose $\Lambda$ is compact and locally connected, and let $U \subset V$ be an open subset 
such that $t(\lambda) \cap U$ is discrete for each $\lambda \in \Lambda$.
Then $t(\Lambda) \cap U$ is locally connected.
\end{enumerate}

\end{lemma}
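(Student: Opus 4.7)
My plan is to prove (1) first and then use it to establish (2).

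For (1), I would argue by contradiction. Suppose $t(\Lambda) = A \sqcup B$ is a separation with $A, B$ non-empty and relatively clopen in $t(\Lambda)$. Since $V$ is metric, the open sets (closures taken in $V$)
$$G := V \setminus \overline{B}, \qquad H := V \setminus \overline{A}$$
satisfy $A \subseteq G$, $B \subseteq H$, $t(\Lambda) \subseteq G \cup H$, and $G \cap H \cap t(\Lambda) = \emptyset$, although $G$ and $H$ need not be globally disjoint. Consider
$$\Lambda_G := \{\lambda \in \Lambda : t(\lambda) \subseteq G\}, \qquad \Lambda_H := \{\lambda \in \Lambda : t(\lambda) \cap H \neq \emptyset\}.$$
In the Hausdorff topology on $Com \ V$, sets of the form $\{K \subseteq G\}$ and $\{K \cap H \neq \emptyset\}$ are open for $G, H$ open in $V$, so $\Lambda_G$ and $\Lambda_H$ are open in $\Lambda$. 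They are disjoint, since $\lambda \in \Lambda_G \cap \Lambda_H$ would force $t(\lambda) \cap G \cap H \neq \emptyset$, contradicting $G \cap H \cap t(\Lambda) = \emptyset$. They cover $\Lambda$, because $t(\lambda) \subseteq G \cup H$ forces either $t(\lambda) \subseteq G$ or $t(\lambda) \cap H \neq \emptyset$. Finally, connectedness of $t(\lambda_0)$ inside $t(\Lambda) = A \sqcup B$ places $t(\lambda_0)$ entirely in $A$ or entirely in $B$, putting $\lambda_0$ in one of $\Lambda_G, \Lambda_H$, while non-emptiness of both $A$ and $B$ forces both $\Lambda_G$ and $\Lambda_H$ to be non-empty. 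This contradicts the connectedness of $\Lambda$.

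For (2), fix $v \in t(\Lambda) \cap U$ and $\varepsilon > 0$; the goal is a connected open neighborhood of $v$ in $t(\Lambda) \cap U$ contained in $B_\varepsilon(v)$. The set $\Lambda_v := \{\lambda \in \Lambda : v \in t(\lambda)\}$ is non-empty and compact, since $v \in K$ is a closed condition in $Com \ V$. For each $\lambda_0 \in \Lambda_v$, discreteness of $t(\lambda_0) \cap U$ supplies $r \in (0, \varepsilon)$ with $\overline{B_r(v)} \subseteq U$ and $t(\lambda_0) \cap \overline{B_r(v)} = \{v\}$; Hausdorff continuity of $t$ at $\lambda_0$ then gives some $\rho \in (0, r)$ and an open neighborhood of $\lambda_0$ on which
$$t(\lambda) \cap B_\rho(v) \neq \emptyset \qquad \textup{and} \qquad t(\lambda) \cap \{z : \rho \leq |z-v| \leq r\} = \emptyset.$$
Shrink this to an open connected neighborhood $\Omega_{\lambda_0}$ of $\lambda_0$ using local connectedness of $\Lambda$. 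The localized trace $\widetilde t(\lambda) := t(\lambda) \cap \overline{B_r(v)}$ is continuous on $\Omega_{\lambda_0}$ in the Hausdorff topology (the bounding sphere is uniformly avoided), and $\widetilde t(\lambda_0) = \{v\}$ is connected, so part (1) applied to $\widetilde t$ implies that $\widetilde t(\Omega_{\lambda_0})$ is a connected subset of $t(\Lambda) \cap U \cap B_\varepsilon(v)$ containing $v$.

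Now cover the compact set $\Lambda_v$ by finitely many such $\Omega_{\lambda_0^{(1)}}, \dots, \Omega_{\lambda_0^{(n)}}$ with corresponding radii $r_i, \rho_i$ and form $S := \bigcup_i \widetilde t(\Omega_{\lambda_0^{(i)}})$. Each summand is connected and contains $v$, so $S$ is a connected subset of $t(\Lambda) \cap U \cap B_\varepsilon(v)$. On the compact set $\Lambda \setminus \bigcup_i \Omega_{\lambda_0^{(i)}}$ the continuous function $\mu \mapsto \textup{dist}(v, t(\mu))$ is strictly positive, hence bounded below by some $\delta_0 > 0$; thus for $\gamma < \min(\delta_0, \rho_1, \dots, \rho_n)$, any point of $t(\Lambda) \cap B_\gamma(v)$ comes from some $t(\mu)$ with $\mu \in \Omega_{\lambda_0^{(i)}}$ and therefore lies in $\widetilde t(\Omega_{\lambda_0^{(i)}}) \subseteq S$. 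Hence $S$ contains an open neighborhood of $v$ in $t(\Lambda) \cap U$, so $v$ has arbitrarily small connected neighborhoods; the standard argument (the connected component of any open neighborhood of $v$ is itself open, by reapplying the construction at each of its points) upgrades this to local connectedness. The main technical difficulty is producing the continuous localized trace $\widetilde t$: a common radius $r$ must be arranged so that no $t(\lambda)$ in a neighborhood of $\lambda_0$ touches $\partial B_r(v)$, which blends the discreteness of $t(\lambda_0) \cap U$ with uniform Hausdorff control. Once this is in place, part (1) packages the local pieces and compactness of $\Lambda_v$ assembles them into a neighborhood of $v$.
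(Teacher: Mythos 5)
The paper states this lemma as a citation from Bousch \cite{B2} and gives no proof of its own, so there is nothing internal to compare against; I am therefore reviewing your argument on its own merits. Your overall strategy (use the Vietoris sub-basic open sets $\{K\subseteq G\}$, $\{K\cap H\neq\emptyset\}$ to pull a separation of $t(\Lambda)$ back to one of $\Lambda$, then in (2) build a locally defined continuous ``truncated trace'' and feed it into (1)) is sound and is essentially the standard route.

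There is, however, a real slip at the end of your argument for (1). You write that ``non-emptiness of both $A$ and $B$ forces both $\Lambda_G$ and $\Lambda_H$ to be non-empty.'' The definitions are asymmetric: $\Lambda_H=\{\lambda: t(\lambda)\cap H\neq\emptyset\}$ is indeed non-empty as soon as $B\neq\emptyset$, since $B\subseteq H$. But $\Lambda_G=\{\lambda: t(\lambda)\subseteq G\}$ is a much stronger condition, and the fact that some $t(\nu)$ meets $A\subseteq G$ gives no control on the rest of $t(\nu)$, which may also hit $B$ and thus lie outside $G$ (recall $B\cap G=\emptyset$). In that case $\nu\in\Lambda_H$, not $\Lambda_G$. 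So non-emptiness of $A$ alone does not populate $\Lambda_G$. What actually saves the argument is $\lambda_0$: if $t(\lambda_0)\subseteq A$ then $\lambda_0\in\Lambda_G$ and you are done; if instead $t(\lambda_0)\subseteq B$, your $\Lambda_G$ could genuinely be empty and the argument as written produces no separation. The fix is trivial---relabel $A\leftrightarrow B$ (equivalently swap the roles of $G$ and $H$ in the definitions of $\Lambda_G,\Lambda_H$) so that, without loss of generality, $t(\lambda_0)\subseteq A$---but that WLOG is the actual content and should be stated; the reason you give is not correct.

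Part (2) is carried out correctly. The points I checked in detail: $\Lambda_v$ is closed since $\{K: v\in K\}$ is closed in the Vietoris topology (its complement is $\{K: K\subseteq V\setminus\{v\}\}$); the choice of $\rho<r$ so that $t(\lambda)$ avoids the closed annulus $\{\rho\le d(\cdot,v)\le r\}$ and meets $B_\rho(v)$ holds on a neighborhood of $\lambda_0$ by Hausdorff continuity, because $t(\lambda_0)$ keeps a positive distance from that annulus; on such a neighborhood $\widetilde t(\lambda)=t(\lambda)\cap\overline{B_r(v)}=t(\lambda)\cap B_\rho(v)$ is non-empty, compact, and Hausdorff-continuous in $\lambda$, so part (1) applies; and the compactness/positivity argument giving $\gamma<\min(\delta_0,\rho_i)$ correctly shows $t(\Lambda)\cap B_\gamma(v)\subseteq S$. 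The final passage from ``arbitrarily small connected neighborhoods at every point'' to local connectedness is the standard ``connected im kleinen everywhere implies locally connected'' fact, and your parenthetical sketch of it is correct.
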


\begin{proof}[Proof of Proposition \ref{outside}]
We apply the Lemma to $\Lambda = [-2, 1/4]$ (which is obviously connected and locally connected), $V = \widetilde{\mathbb{D}}$, $U = \mathbb{D}$.
Since $K_0(t)$ has no zeros inside the unit disk, then $\textup{Tr}(0) = \{\infty\}$ is connected, so 
by (1) the one-point compactification of $\Sigma_{kn} \cap \mathbb{E}$ is connected. Since by Lemma \ref{nth-roots} $\Sigma_{kn}$ contains $S^1$ which is 
connected, then $(\Sigma_{kn} \cap \mathbb{E}) \cup S^1$ is also connected.
Since no kneading determinant is identically zero, for each $c$ the set of zeros of $K_c(t)$ inside the unit disk is discrete, 
so by (2) we get that that $\Sigma_{kn} \cap \mathbb{E}$ is locally connected.
\end{proof}

A (direct) proof of the path-connectivity of $\Sigma_{kn}$ will be given in section \ref{section:lifting}.
It is worth mentioning that a three-dimensional object very close to $\widehat{\Sigma}$ appears in Thurston's 
paper \cite{Th}, where it is called the ``master teapot''.

\section{Irreducible polynomials}

In order to study the Galois conjugates of growth rates we need to find
their minimal polynomials. In particular, since we know that 
kneading polynomials vanish on the growth rate, they coincide 
with the minimal polynomials once we prove they are irreducible.
To construct irreducible polynomials we shall use the next two algebraic lemmas.
The following observation is due to B. Poonen.

\begin{lemma} \label{irred}
Let $d = 2^n -1$ with $n \geq 1$, and choose a sequence $\epsilon_0, \epsilon_1, \dots, \epsilon_n$ with each 
$\epsilon_k \in \{\pm 1\}$, such that 
$\sum_{k = 0}^d \epsilon_k \equiv 2 \ \mod 4$. Then the polynomial 
$$f(x) := \epsilon_0 + \epsilon_1 x + \dots + \epsilon_{d} x^{d}$$
is irreducible in $\mathbb{Z}[x]$.
\end{lemma}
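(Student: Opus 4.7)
The proof hinges on reducing modulo $2$. Since every $\epsilon_k$ is $\pm 1$, its reduction mod $2$ is $1$, so
$$f(x) \equiv 1 + x + x^2 + \dots + x^{d} \pmod 2.$$
Because $d + 1 = 2^n$ and we are in characteristic $2$, we have $x^{2^n} - 1 = (x-1)^{2^n}$, which gives
$$f(x) \equiv \frac{x^{2^n}-1}{x-1} \equiv (x+1)^{d} \pmod 2.$$
So modulo $2$, the polynomial $f$ is a pure power of the unique irreducible $x+1 \in \mathbb{F}_2[x]$.

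The plan now is to suppose toward a contradiction that $f = g h$ is a nontrivial factorization in $\mathbb{Z}[x]$ (which is legitimate to consider, since $f$ is primitive: its coefficients are $\pm 1$, so Gauss's lemma reduces $\mathbb{Q}[x]$-irreducibility to $\mathbb{Z}[x]$-irreducibility). Reducing mod $2$, unique factorization in $\mathbb{F}_2[x]$ forces $g \equiv (x+1)^a$ and $h \equiv (x+1)^b$ with $a + b = d$ and $a, b \geq 1$.

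Evaluating at $x = 1$ then gives $g(1) \equiv 0 \pmod 2$ and $h(1) \equiv 0 \pmod 2$, and hence
$$f(1) = g(1) h(1) \equiv 0 \pmod 4.$$
But $f(1) = \sum_{k=0}^{d} \epsilon_k$, which by hypothesis is $\equiv 2 \pmod 4$, a contradiction. This finishes the argument.

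The only real subtlety is the mod-$2$ computation $f(x) \equiv (x+1)^{2^n - 1}$, which depends crucially on $d+1$ being a power of $2$; once this is in hand the rest is a one-line parity check on $f(1)$. I do not anticipate a genuine obstacle: the congruence condition $\sum \epsilon_k \equiv 2 \pmod 4$ is engineered precisely to rule out the only shape a factorization could have after reduction mod $2$.
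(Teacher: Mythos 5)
Your proof is correct and is mathematically equivalent to the paper's, which applies Eisenstein's criterion at the prime $2$ to the shifted polynomial $g(x) := f(x+1)$. The two facts both arguments hinge on are identical: that $f(x) \equiv (x+1)^d \pmod 2$ (equivalently, that $f(x+1) \equiv x^d \pmod 2$, using $d+1 = 2^n$), and that $f(1) = \sum_k \epsilon_k \equiv 2 \pmod 4$. You have simply unrolled the standard proof of Eisenstein's criterion rather than invoking it by name, and skipped the change of variable $x \mapsto x+1$ by working directly at the point $x=1$; what the paper's shift buys is a one-line reduction to a named theorem, while your version is more self-contained. One small point you handle correctly but implicitly: since $f$ has leading coefficient $\pm 1$, so do $g$ and $h$, hence $\deg \bar g = \deg g$ and $\deg \bar h = \deg h$ under reduction mod $2$, which together with primitivity of $f$ is what guarantees $a, b \ge 1$.
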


\begin{proof}
We apply Eisenstein's criterion to $g(x) := f(x + 1)$. Indeed, reducing modulo $2$, 
$$xf(x+1) \equiv \sum_{k=0}^d x(x+1)^k \equiv  (x+1)^{d+1} - 1 \equiv x^{d+1}$$
where in the last equation we used that $d+1$ is a power of $2$. Thus, we have $g(x) \equiv x^d$ modulo $2$, 
while $g(0) = \sum_{k = 0}^d \epsilon_k$ is divisible by $2$ but not by $4$ by hypothesis and Eisenstein's criterion 
can be applied.
\end{proof}

\begin{lemma} \label{irredhigh}
Let $f(x) := 1 + \sum_{k = 1}^d \epsilon_k x^k$ be a polynomial, with $\epsilon_k \in \{ \pm 1\}$ for all $1 \leq k \leq d$ and 
$\epsilon_k = -1$ for some $k$. If $f(x)$ is irreducible in $\mathbb{Z}[x]$, then for all $n \geq 1$ the polynomial $f(x^{2^n})$
is irreducible in $\mathbb{Z}[x]$.
\end{lemma}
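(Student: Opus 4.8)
The plan is to fix a root $\alpha$ of $f$, write $K := \mathbb{Q}(\alpha)$ and $d := \deg f = [K:\mathbb{Q}]$ (we may assume $d \geq 2$, which holds in all the situations where the lemma is applied), and reduce the statement to the irreducibility of a radical polynomial over $K$. If $\gamma$ is a root of $f(x^{2^n})$ then $\gamma^{2^n}$ is a root of $f$, so after relabeling $\gamma^{2^n} = \alpha$; hence $K \subseteq \mathbb{Q}(\gamma)$, and $f(x^{2^n})$ is irreducible over $\mathbb{Q}$ if and only if $[\mathbb{Q}(\gamma):K] = 2^n$, i.e.\ if and only if $X^{2^n} - \alpha$ is irreducible over $K$. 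By the Capelli--Kneser criterion, $X^{2^n} - \alpha$ is irreducible over $K$ precisely when (A) $\alpha \notin K^2$ and, when $n \geq 2$, also (B) $\alpha \notin -4K^4$. These two conditions do not involve $n$, so it suffices to verify them once.

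For (A) I would argue by contradiction. If $\alpha = \beta^2$ with $\beta \in K$, then $\mathbb{Q}(\beta) = K$ and the monic minimal polynomial $g \in \mathbb{Z}[x]$ of $\beta$ has degree $d$; since the roots of $f(x^2)$ are the $\pm$ square roots of the roots of $f$, and $g(x) \neq \pm g(-x)$ because $f(0) \neq 0$, comparing these roots together with the leading and constant coefficients forces $f(x^2) = g(x)\,g(-x)$ with $g(0) = \pm 1$. Writing $g(x) = E(x^2) + x\,O(x^2)$ with $E, O \in \mathbb{Z}[x]$, this becomes the polynomial identity $f(y) = E(y)^2 - y\,O(y)^2$. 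Reducing modulo $2$ and using the Frobenius, one sees that $E$ and $O$ must have all coefficients congruent to $1$; reducing then modulo $4$ and telescoping with $\sigma_m(y) := 1 + y + \dots + y^m$ (via $\sigma_m^2 - y\,\sigma_{m-1}^2 = \sigma_{2m}$ and $\sigma_m^2(1-y) = \sigma_m(1 - y^{m+1})$) yields $f(y) \equiv \sigma_d(y) \pmod 4$ when $d$ is even, and $f(y) = \sigma_m(y)(1 - y^{m+1})$ with $m = \tfrac{d-1}{2} \geq 1$ when $d$ is odd. In the first case every $\epsilon_k$ must equal $1$, contradicting the hypothesis that some $\epsilon_k = -1$; in the second, $f$ is a product of two non-units, contradicting its irreducibility. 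Hence $\alpha \notin K^2$.

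For (B), suppose $\alpha = -4\delta^4$ with $\delta \in K$. Since $f$ and its reciprocal polynomial $x^d f(1/x)$ are both monic up to sign with coefficients of absolute value $1$, $\alpha$ is a unit in $\mathcal{O}_K$, so $v_{\mathfrak{p}}(\alpha) = 0$ for every prime $\mathfrak{p} \mid 2$ of $K$; then $\alpha = -4\delta^4$ gives $2\,e(\mathfrak{p}\mid 2) = -4\,v_{\mathfrak{p}}(\delta)$, so every $e(\mathfrak{p}\mid 2)$ is even. When $d$ is odd this contradicts $\sum_{\mathfrak{p}\mid 2} e(\mathfrak{p}\mid 2)\,[\mathcal{O}_K/\mathfrak{p} : \mathbb{F}_2] = d$ (an odd number is not a sum of even ones); alternatively, $N_{K/\mathbb{Q}}(\alpha) = (-4)^d N_{K/\mathbb{Q}}(\delta)^4$ together with $N_{K/\mathbb{Q}}(\alpha) = \pm 1$ forces $N_{K/\mathbb{Q}}(\delta) = \pm 2^{-d/2} \notin \mathbb{Q}$. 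When $d$ is even, $\bar f(x) = 1 + x + \dots + x^d$ is separable over $\mathbb{F}_2$ — since $d+1$ is odd, $x^{d+1}-1$ is separable with $x = 1$ a simple root — so $2 \nmid \operatorname{disc}(f)$, $2$ is unramified in $K$, and every $e(\mathfrak{p}\mid 2) = 1$ is odd, again a contradiction. Hence $\alpha \notin -4K^4$, and with (A) and the Capelli--Kneser criterion the lemma follows.

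I expect the main obstacle to be the modular bookkeeping in (A): one has to pin down $E$ and $O$ modulo $2$ exactly and push the telescoping identities far enough to recognize the two degenerate shapes of $f$ — this is the step that actually consumes both the ``some $\epsilon_k = -1$'' hypothesis and the irreducibility of $f$. The even-degree case of (B) is the other delicate point, since there the norm argument is inconclusive and one must instead rule out ramification of $2$.
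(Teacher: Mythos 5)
Your argument is correct and takes a genuinely different route from the paper's. The paper proceeds by induction on $n$ (minimal counterexample), uses the substitution $x \mapsto -x$ and uniqueness of factorization in $\mathbb{Z}[x]$ to force $f(x^{2^n}) = g(x)g(-x)$, and then reaches a contradiction by direct coefficient comparison modulo $2$ and modulo $4$; nothing beyond elementary arithmetic in $\mathbb{Z}[x]$ is used. You instead pass to $K = \mathbb{Q}(\alpha)$ and reduce to the irreducibility of $x^{2^n}-\alpha$ over $K$ via the Vahlen--Capelli criterion, which treats all $n$ at once. Your verification of $\alpha \notin K^2$ is essentially the paper's $n=1$ case repackaged: the identity $f(y) = E(y)^2 - y\,O(y)^2$ plus the telescoping identities is a cleaner way to organize the same mod $2$/mod $4$ bookkeeping, and it terminates in the same two degenerate shapes (all coefficients $+1$, or $f(y) = (1 + y + \cdots + y^{(d-1)/2})(1 - y^{(d+1)/2})$). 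The condition $\alpha \notin -4K^4$, which has no counterpart in the paper because the induction on $n$ absorbs it, is handled correctly by observing that $\alpha$ is an algebraic unit, so $\alpha = -4\delta^4$ would force every ramification index of $2$ in $K$ to be even, which fails for $d$ odd by the norm (or degree-parity) argument and for $d$ even because $1 + x + \cdots + x^d$ is separable mod $2$ (as $d+1$ is odd), so $2$ is unramified. The trade-off is clear: the paper is elementary and self-contained, while your proof imports Capelli and some algebraic number theory but dispenses with the induction and the involution/unique-factorization step, and makes visible exactly where the two hypotheses (some $\epsilon_k = -1$, irreducibility of $f$) are consumed.

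Two small points. First, your parenthetical reason for $g(x) \neq \pm g(-x)$ (``because $f(0) \neq 0$'') only rules out $g(-x) = -g(x)$; for $g(-x) = g(x)$ one should note that $g(x) = h(x^2)$ would give $h(\alpha) = 0$ with $\deg h = d/2 < d$, contradicting $[K:\mathbb{Q}] = d$. Second, your standing assumption $d \geq 2$ is genuinely needed: for $f(x) = 1 - x$ the statement fails ($1 - x^2$ is reducible), and the same restriction is implicitly assumed in the paper, whose $d$-odd endgame concludes reducibility from $f(x) = (1 + \cdots + x^{(d-1)/2})(1 - x^{(d+1)/2})$, which is no contradiction when $d = 1$; as you observe, every application in the paper has $d \geq 2$, so this is a caveat on the statement rather than a flaw in your proof.
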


\begin{proof}
Suppose by contradiction $n \geq 1$ is the minimal integer for which $f(x^{2^n})$ is not irreducible. 
Thus, there exists a (unique) factorization 
$$f(x^{2^n}) = g_1(x) \dots g_r(x)$$ 
where $g_1(x), \dots, g_r(x)$ are irreducible polynomials with $g_i(0) = 1$ for each $i$.
By substituting $x$ with $-x$, we get
$$f(x^{2^n}) = g_1(-x) \dots g_r(-x)$$ 
so by uniqueness of the factorization there exists an involution 
$\sigma : \{1, \dots, r \} \to \{1, \dots, r \}$ such that for each $i$ we have 
$g_i(-x) = g_{\sigma(i)}(x)$. If the involution $\sigma$ has a fixed point $i$, then  
$g_i(x)$ is of the form $g_i(x) = h_1(x^2)$ for some $h_1(x) \in \mathbb{Z}[x]$, which implies that 
$f(x^{2^n})$ can be factored as 
$$f(x^{2^n}) = h_1(x^2) h_2(x^2) \qquad h_1(x), h_2(x) \in \mathbb{Z}[x]$$
so $f(x^{2^{n-1}})$ is also reducible, contradicting the minimality of $n$. Hence the involution $\sigma$ 
has no fixed point and, by grouping together the factors $g_k(x)$, we have the factorization 
\begin{equation} \label{factpoly}
f(x^{2^n}) = g(x) g(-x) 
\end{equation}
for some $g(x) \in \mathbb{Z}[x]$. We shall now see that this is impossible, by comparing coefficients 
on both sides of equation \eqref{factpoly}. 
Let us denote the coefficients by  
$f(x^{2^n}) := \sum_{k = 0}^{2^n d} b_{k} x^{k}$ and $g(x) := \sum_{k=0}^{2^{n-1}d} a_k x^k$.
Let us look at equation \eqref{factpoly}. Since $a_0^2 = b_0 = 1$, then $a_0 = \pm 1$.
For each $1 \leq k \leq 2^{n-1}d$, the coefficient of $x^{2k}$ is of type
\begin{equation} \label{comparecoeff}
b_{2k} = \sum_{j = j_0}^{k-1} \pm 2 a_j a_{2k-j} \pm a_{k}^2 
\end{equation}
where $j_0 := \max\{0, 2k - 2^{n-1}d\}$. As a consequence, we see that $a_k$ is even if and only if $b_{2k}$ is even. Thus we get the following congruences:
\begin{equation} \label{congruences}
\left\{\begin{array}{ll}
a_j \equiv 0 \mod 2 & \textup{ for } 1  \leq j \leq 2^{n-2} \textup{ (if }n \geq 2\textup{)}\\
a_{j 2^{n-1}} \equiv 1 \mod 2 & \textup{ for } 1\leq j \leq d.
\end{array}\right.
\end{equation}
We now have two cases:
\begin{itemize}
 \item Suppose $n \geq 2$. Then if we look at equation \eqref{comparecoeff} for $k = 2^{n-2}$, 
we get the equality 
$$ 2 a_0 a_{2^{n-1}} + \sum_{j = 1}^{2^{n-2}-1} \pm 2 a_j a_{2^{n-1}-j} \pm a_{2^{n-2}}^2 = 0. $$
By equation \eqref{congruences}, every term in the sum except possibly for the first one is multiple of $4$, so the first term 
$2 a_0 a_{2^{n-1}}  = \pm 2 a_{2^{n-1}}$ must also be multiple of $4$, so $a_{2^{n-1}}$ is even. However, this contradicts
the second line of equation \eqref{congruences}.
\item Suppose $n = 1$. Then from equation \eqref{congruences} we have for each $1 \leq k \leq d$ that $a_k$ is odd, and 
equation \eqref{comparecoeff} becomes of the form  
$$b_{2k} = 2 N_k + (-1)^k a_k^2$$
where $N_k$ has the same parity as the number of terms under the summation symbol in  \eqref{comparecoeff}, which is 
$\min \{k, d-k \}$. Now, by analyzing the previous equation modulo $4$ we realize that 
$b_{2k}$ cannot be $-1$ for any $1\leq k \leq d/2$, so we must have $b_{2k} = 1$ for all $1\leq k \leq d/2$.
Moreover, for $k > d/2$ either $d$ is even and $b_{2k} = 1$ for all $d/2 \leq k \leq d$,   
or $d$ is odd and $b_{2k} = -1$ for all $d/2 \leq k \leq d$. In the first case we contradict the initial hypothesis on $f(x)$
since all its coefficients equal $+1$; in the second case, we also get a contradiction 
because we obtain that $f(x) = (\sum_{k = 0}^{\frac{d-1}{2}} x^k) (1 - x^{\frac{d+1}{2}})$ is not irreducible.
\end{itemize}
\end{proof}

Note that the condition on the coefficients of $f(x)$ not being all equal is essential: indeed, 
the polynomial $f(x) := \sum_{k = 0}^{p-1} x^k$ can be irreducible (e.g. for $p = 5$), 
but $f(x^2)$ never is. Related issues on irreducibility and its relationship 
to renormalization will be discussed in sections \ref{section:irredrenorm} and \ref{s:remirr}.

\section{Inside the disk: roots of polynomials with coefficients $\pm 1$} \label{section:pmu}

Since the series $K_c(t)$ need not converge outside the unit disk, then the set 
of zeros of $K_c(t)$ outside the disk need not (and probably does not) vary continuously
as a function of $c$. However, it turns out that the set $\Sigma \cap \mathbb{D}$ coincides
with another set which has a natural parameterization by a path-connected set.
Let $\Sigma_{\pm 1}$ be the set of zeros of power series with coefficients $+1$ or $-1$:
$$\Sigma_{\pm 1} := \left\{ s \in \mathbb{D} \ : \  \sum_{k = 1}^\infty \epsilon_k s^k = 0 
\textup{ for some } \epsilon_k \in \{-1, +1\}^\mathbb{N} \right\}.$$
The set $\Sigma_{\pm 1}$ 
was considered by Bousch \cite{B1}, \cite{B2} in connection with the dynamics of certain 
iterated function systems (IFS). In fact, for each $s \in \mathbb{D}$, 
the IFS given by $z \mapsto sz \pm 1$ has a compact attractor $K_s$, and a parameter $s \in \mathbb{D}$ belongs to 
$\Sigma_{\pm 1}$ if and only if $K_s$ contains the ``critical point'' $z = 0$.

The set $\Sigma_{\pm 1}$ is naturally parameterized by the Cantor set $\{ \pm 1\}^\mathbb{N}$; 
by producing a path-connected quotient of the Cantor set which parameterizes 
$\Sigma_{\pm 1}$, Bousch proved the following

\begin{theorem} \label{pmulc}
The set $\Sigma_{\pm 1}$ is path-connected and locally connected. 
\end{theorem}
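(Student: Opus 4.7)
The plan is to adapt the template of the proof of Proposition~\ref{outside} and apply Lemma~\ref{lclemma}, but with a crucial modification to address the disconnectedness of the parameter space. The natural parameterization is the map
$$t : \{\pm 1\}^{\mathbb{N}} \longrightarrow \mathrm{Com}\,\widetilde{\mathbb{D}}, \qquad t(\epsilon) := \Bigl\{ s \in \mathbb{D} \ : \ \sum_{k \geq 1} \epsilon_k s^k = 0 \Bigr\} \cup \{\infty\},$$
whose image, after discarding $\infty$, is exactly $\Sigma_{\pm 1}$. By Rouché's theorem, $t$ is continuous in the Hausdorff topology, and each fiber is discrete inside $\mathbb{D}$ since none of the power series in question vanishes identically.

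The essential difficulty is that the domain $\{\pm 1\}^{\mathbb{N}}$ is a Cantor set, hence totally disconnected, so the connectedness part of Lemma~\ref{lclemma} cannot be applied directly. Following Bousch, I would factor $t$ through a well-chosen quotient $\pi : \{\pm 1\}^{\mathbb{N}} \to Q$, where the equivalence identifies $\epsilon \sim \epsilon'$ whenever $t(\epsilon) = t(\epsilon')$; the goal is to exhibit enough such identifications to ensure that $Q$ is compact, path-connected, and locally connected. Several elementary families of identifications arise naturally: the global sign change $\epsilon \leftrightarrow -\epsilon$ preserves the zero set; and if $\sum_{k \geq 1} \epsilon_k s_0^k = 0$, then for each $N \geq 1$ the tail $(\epsilon_{N+1}, \epsilon_{N+2}, \ldots)$ may be replaced by any $\pm 1$-sequence $(\eta_{N+1}, \eta_{N+2}, \ldots)$ satisfying $\sum_{k > N} \eta_k s_0^k = \sum_{k > N} \epsilon_k s_0^k$, preserving the zero at $s_0$. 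A careful combinatorial analysis of the equivalence relation generated by such surgery moves then produces a well-behaved locally connected topology on $Q$.

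Having constructed $Q$, the conclusion follows from Lemma~\ref{lclemma} applied to the induced continuous map $\bar t : Q \to \mathrm{Com}\,\widetilde{\mathbb{D}}$: taking $\epsilon_0 := (+1, +1, \ldots)$ gives $t(\epsilon_0) = \{\infty\}$, so part~(1) yields that $\Sigma_{\pm 1} \cup \{\infty\}$ is connected, while part~(2) applied with $U = \mathbb{D}$ yields local connectedness inside the open disk. Path-connectedness follows either by lifting explicit paths from $Q$ to zeros of the corresponding power series, or from the general principle that a compact, connected and locally connected metric space is path-connected.

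The main obstacle is the construction and topological analysis of $Q$: identifying the correct equivalence relation on $\{\pm 1\}^{\mathbb{N}}$, showing that the quotient inherits a locally connected topology from the Cantor set, and producing explicit continuous paths between arbitrary classes are combinatorially intricate. This is the core of Bousch's contribution in~\cite{B1},~\cite{B2}, and is the reason the result for $\Sigma_{\pm 1}$ is subtler than the corresponding statement for $\Sigma_{kn}$ in Proposition~\ref{outside}, where the parameter $c$ already varies over the connected interval $[-2, 1/4]$.
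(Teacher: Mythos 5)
The paper does not actually prove Theorem~\ref{pmulc}; it is cited to Bousch~\cite{B1},~\cite{B2}, with a one-sentence summary of his strategy (``by producing a path-connected quotient of the Cantor set which parameterizes $\Sigma_{\pm 1}$''). Your proposal is essentially an expansion of that one sentence, so at a strategic level it agrees with the paper; but you are honest that the heart of the argument --- constructing the quotient $Q$ and establishing its topology --- is left to Bousch, which means this is a summary rather than a proof.

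Two concrete problems with the sketch as written. First, the equivalence relation you propose, $\epsilon \sim \epsilon'$ iff $t(\epsilon) = t(\epsilon')$, is circular: the induced map $\bar t : Q \to \mathrm{Com}\,\widetilde{\mathbb{D}}$ is then injective, so the topological assertion you need about $Q$ (path-connected, locally connected) is literally equivalent to the theorem you are trying to prove; you have gained no leverage. Bousch's actual relation is a finer, explicitly combinatorial one (identifying sequences related by specific finite surgeries), and the content of his argument is to prove that \emph{this} relation already makes $Q$ a dendrite-like space and that it refines the zero-set equality. The ``tail surgery'' moves you mention are in the right spirit, but the hard direction --- showing that the resulting quotient is Hausdorff, compact, connected, and locally connected, and that $\bar t$ factors through it --- is precisely the part omitted. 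Second, the appeal to ``compact + connected + locally connected $\Rightarrow$ path-connected'' needs care: $\Sigma_{\pm 1}$ is not compact (it is a subset of the open disk which accumulates on the unit circle, e.g.\ it contains the annulus $\{2^{-1/4}<|z|<1\}$). One must work with $\Sigma_{\pm 1}\cup\{\infty\}$ in $\widetilde{\mathbb D}$ and verify local connectivity at $\infty$ separately, since Lemma~\ref{lclemma}(2) only gives local connectivity on the open set $U=\mathbb D$, not at the compactifying point.
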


We shall now see that the intersections of the two sets with the unit disk are the same:

\begin{figure}
\includegraphics[width=0.8\textwidth]{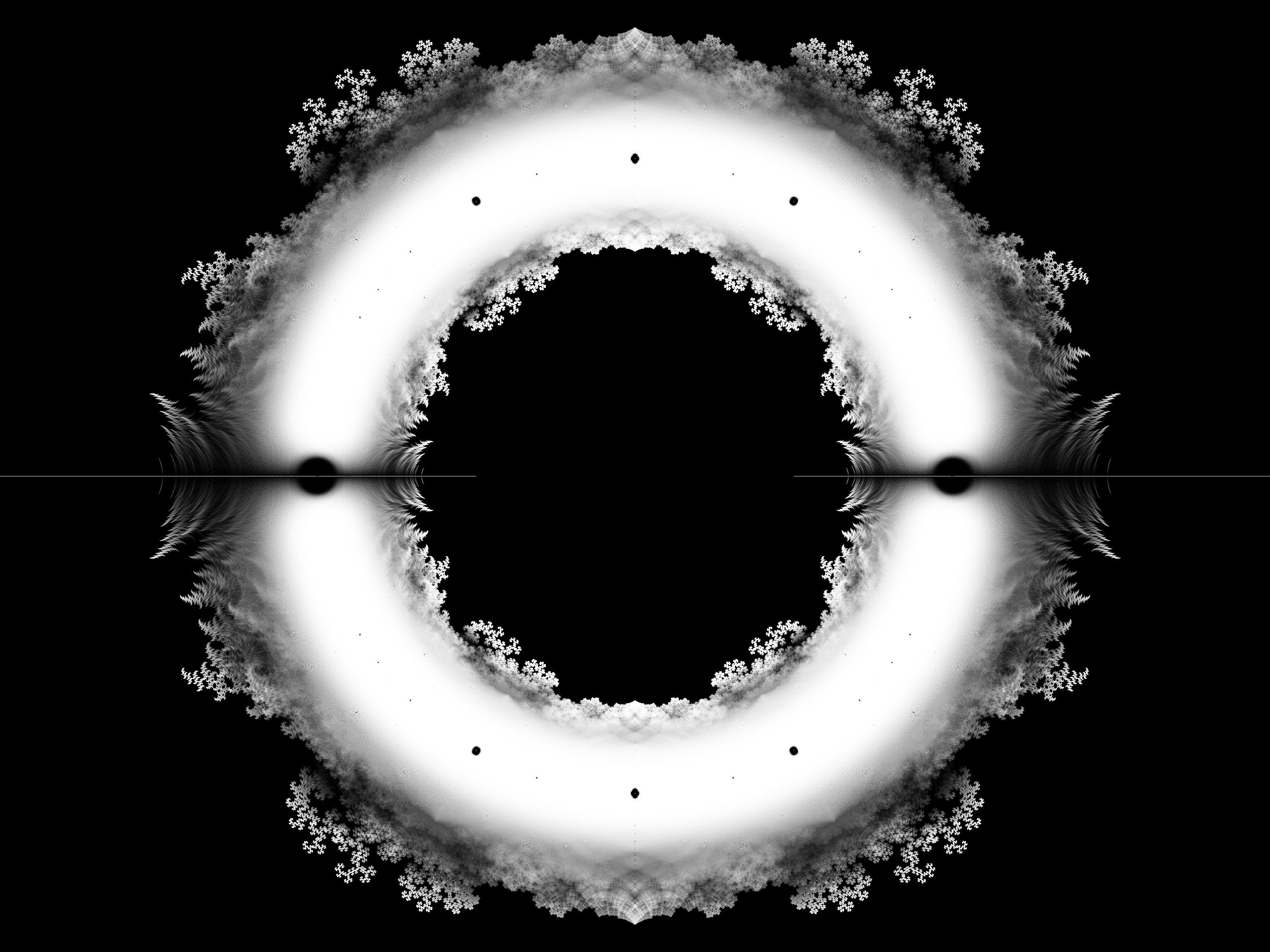}
\caption{The set of zeros of all polynomials with coefficients $\pm 1$.}
\label{pmu}
\end{figure}

\begin{proposition} \label{equalpmu}
We have the equality 
$$\Sigma \cap \mathbb{D} = \Sigma_{\pm 1} \cap \mathbb{D}.$$  
\end{proposition}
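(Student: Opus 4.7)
The plan is to prove the two inclusions $\Sigma \cap \mathbb{D} \subseteq \Sigma_{\pm 1} \cap \mathbb{D}$ and $\Sigma_{\pm 1} \cap \mathbb{D} \subseteq \Sigma \cap \mathbb{D}$ separately. Both directions exploit the fact that kneading polynomials have coefficients in $\{\pm 1\}$; the reverse direction additionally relies on the irreducibility lemmas of the previous section.

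For the easy inclusion, any Galois conjugate $\lambda \in \mathbb{D}$ of the growth rate $s$ of a superattracting map $f_c$ is a root of the reciprocal polynomial $\tilde{P}_c(t)$ of the kneading polynomial $P_c(t)$, which has $\pm 1$ coefficients and degree $p-1$. Since $|\lambda| < 1$, the periodic power series $\tilde{P}_c(t)/(1-t^p)$ has $\pm 1$ coefficients and vanishes at $\lambda$, so $\lambda \in \Sigma_{\pm 1}$. A normal families argument applied to coefficient sequences in the compact space $\{\pm 1\}^\mathbb{N}$ shows that $\Sigma_{\pm 1}$ is closed in $\mathbb{D}$, so passing to closures gives $\Sigma \cap \mathbb{D} \subseteq \Sigma_{\pm 1} \cap \mathbb{D}$.

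For the reverse inclusion, fix $w \in \Sigma_{\pm 1} \cap \mathbb{D}$, a root of some $\pm 1$ power series $\phi(t) = 1 + \sum_{k \geq 1} \eta_k t^k$. I will construct, for every large $N$, an admissible irreducible kneading polynomial $P^{(N)}(t) = \sum_{k=0}^{p-1} \epsilon_k t^k$ of period $p = 2^n$ (with $n$ much larger than $N$) whose reciprocal $\tilde{P}^{(N)}$ has a root $\lambda_N \in \mathbb{D}$ with $\lambda_N \to w$. Specifically, set $\epsilon_0 = 1$, $\epsilon_1 = \ldots = \epsilon_{N_0} = -1$ for a long initial run, and $\epsilon_{p-1-k} = \eta_k$ for $0 \leq k \leq N$ so that the prefix of $\phi$ (reversed) sits at the top of $P^{(N)}$, leaving the middle coefficients $\epsilon_{N_0+1}, \ldots, \epsilon_{p-N-2}$ free. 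The low-order coefficients of $\tilde{P}^{(N)}$ then agree with those of $\phi$ up to order $N$, whence
$$\tilde{P}^{(N)}(w) - \phi(w) = O\Bigl(\frac{|w|^{N+1}}{1-|w|}\Bigr) \longrightarrow 0 \quad\text{as}\quad N \to \infty,$$
so Rouch\'e's theorem produces a root $\lambda_N \to w$. Provided $P^{(N)}$ is irreducible, $\tilde{P}^{(N)}$ is the minimal polynomial of the growth rate of the associated superattracting map $f_{c_N}$, so $\lambda_N \in \Sigma$, and therefore $w \in \Sigma$.

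The heart of the proof is choosing the middle coefficients so that $P^{(N)}$ is simultaneously admissible and irreducible. Admissibility via Corollary \ref{admissible} holds once $N_0$ exceeds the maximal runlength in the middle block (controlled by using a near-alternating pattern), in the reversed prefix $\eta_N, \ldots, \eta_0$, and across the period wrap-around (which includes the initial $+1$-run of $\eta$); all of these are bounded functions of $N$ and $\phi$, so they can be dominated by taking $N_0$ large. Irreducibility via Lemma \ref{irred}, applied with $d = p - 1 = 2^n - 1$, requires $\sum_k \epsilon_k \equiv 2 \pmod 4$; a direct parity check confirms this is compatible with the admissibility choices, and since the middle block has length $p - N_0 - N - 2$ which grows without bound as $n \to \infty$, we can adjust its coefficient sum to hit the required residue modulo $4$. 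The main obstacle is precisely this simultaneous satisfaction of the combinatorial (runlength) admissibility criterion and the arithmetic (sum modulo $4$) irreducibility criterion, and it is resolved by the abundance of freedom in the middle block when $p$ is taken sufficiently large compared to $N$ and $N_0$.
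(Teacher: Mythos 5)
Your proposal is correct and follows essentially the same approach as the paper: a long initial run of $-1$s to guarantee admissibility via Corollary \ref{admissible}, the reversed prefix of $\phi$ placed at the top of the polynomial, degree chosen to be $2^m-1$ together with a parity adjustment so Lemma \ref{irred} applies, and Rouch\'e's theorem to push roots of the reciprocal polynomials to the given zero of $\phi$. The only cosmetic difference is that the paper uses exactly one free middle coefficient $\eta$ and varies the run length $N$ so that $n+N+3$ is a power of $2$, whereas you leave a longer free middle block to absorb both constraints; both variants work.
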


Then, using Theorem \ref{pmulc}, we get the

\begin{corollary} \label{inside}
The set $\Sigma \cap \mathbb{D}$ is path-connected
and locally connected.
\end{corollary}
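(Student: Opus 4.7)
The plan is that this corollary follows immediately by chaining the two results just stated. First I would invoke Proposition \ref{equalpmu} to rewrite
$$\Sigma \cap \mathbb{D} = \Sigma_{\pm 1} \cap \mathbb{D}.$$
Since $\Sigma_{\pm 1}$ is by definition contained in the open unit disk $\mathbb{D}$, intersecting with $\mathbb{D}$ changes nothing, so this further simplifies to the identification $\Sigma \cap \mathbb{D} = \Sigma_{\pm 1}$ of subsets of $\mathbb{C}$. In particular, the two sets agree as topological subspaces of the plane.

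Next I would invoke Theorem \ref{pmulc}, which asserts that $\Sigma_{\pm 1}$ is itself path-connected and locally connected. Since both properties are purely topological invariants of the underlying space, they transfer across the identification without any further argument, yielding the corollary.

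There is really no obstacle here: the substantive content has already been done elsewhere. Bousch's path-connected parameterization of $\Sigma_{\pm 1}$ by a quotient of $\{\pm 1\}^\mathbb{N}$ provides the topological input (Theorem \ref{pmulc}), while the genuinely novel comparison result is Proposition \ref{equalpmu}, whose proof uses the admissibility criterion (Theorem \ref{admthm}) to show that every zero inside $\mathbb{D}$ of a $\pm 1$ power series can in fact be realized by an admissible kneading sequence, and conversely. Granting both of these, the corollary is a one-line deduction.
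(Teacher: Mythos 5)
Your proposal is correct and matches the paper exactly: the corollary is stated as an immediate consequence of Proposition \ref{equalpmu} together with Theorem \ref{pmulc}, and your observation that $\Sigma_{\pm 1}$ is already contained in $\mathbb{D}$ (so $\Sigma \cap \mathbb{D} = \Sigma_{\pm 1}$) closes the deduction cleanly. Nothing more is needed.
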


\begin{proof}[Proof of Proposition \ref{equalpmu}]
Since the kneading determinants have coefficients $\pm 1$, it is clear that 
$\Sigma \cap \mathbb{D} \subseteq \Sigma_{\pm 1} \cap \mathbb{D}$.
In order to prove the other inclusion, let
$$\phi(t) := \sum_{k = 0}^\infty \epsilon_k t^k$$
be any power series with $\epsilon_i \in \{ \pm 1 \}$, and fix $n \geq 1$.
Let $N_0$ be the maximum number of consecutive equal digits in the sequence $(\epsilon_0, \dots, \epsilon_n)$:
$$N_0 := \max\{ k \ : \  \epsilon_i = \epsilon_{i+1} = \dots = \epsilon_{i+k-1} \textup{ for some } 0 \leq i \leq n-k+1 \}.$$
Then for each $N > N_0 + 1$ and each choice of $\eta \in \{\pm 1\}$, the polynomial
$$P_n(t) := 1 - \sum_{k = 1}^{N} t^k + \eta t^{N+1} + \sum_{k = 0}^{n} \epsilon_{n-k} t^{N+2+k}$$
is admissible. 
Moreover, by construction the first $n$  coefficients of its 
reciprocal polynomial $Q_n(t) := t^{N+n+2}P_n(t^{-1})$ coincide with the first $n$ coefficients 
of $\phi(t)$; thus by Rouch\'e's theorem each zero of $\phi(t)$ inside the unit disk is approximated by a sequence of zeros of $Q_n(t)$.
In addition, for each $n$ we can pick $N > N_0 +1$ and such that $n+N+3$ is a power of $2$, and we are free to choose
$\eta \in \{\pm 1\}$ such that $1 - N - \eta + \sum_{k = 0}^n \epsilon_k \equiv 2 \mod 4$. 
This way, by Lemma \ref{irred} the polynomials $Q_n(t)$ are irreducible, so the zeros of $Q_n(t)$ belong 
to $\Sigma$ and the claim is proven.
\end{proof}

The essential idea in the previous proof is that every sequence arises as suffix of an admissible 
kneading sequence: note that we cannot prove such an identity for the part of $\Sigma$ outside the disk because 
not every sequence arises as prefix of an admissible sequence, and indeed the pictures suggest that $\Sigma$ is smaller 
than $\Sigma_{\pm 1}$.



\section{A neighbourhood of the circle} \label{section:neigh}

Let us now prove that the set $\Sigma$ (hence also $\Sigma_{kn}$) contains a neighbourhood of the unit circle, 
as can be seen from Figure \ref{third}.

\begin{proposition} \label{annulus}
There exists $R > 1$ such that the inclusion
$$\{z :  R^{-1} < |z| < R \} \subseteq \Sigma$$
holds.
\end{proposition}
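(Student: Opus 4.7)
The plan is to show that $\Sigma$ is dense in some open annulus $\{R^{-1}<|z|<R\}$ around the unit circle; since $\Sigma$ is closed as the closure of a subset of $\mathbb{C}$, density will yield the claimed containment. I would handle the interior and exterior of $\mathbb{D}$ separately, and expect the exterior to absorb most of the work.

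For density inside $\mathbb{D}$, I would invoke Proposition \ref{equalpmu}, which reduces the problem to the analogous density of $\Sigma_{\pm 1}$ in an inner neighbourhood of $S^1$. This is a classical statement for Littlewood polynomials: given $w_0\in\mathbb{D}$ with $|w_0|$ close to $1$, one truncates and rounds the coefficients of the geometric series $\sum_k w_0^{-k}x^k$ to $\pm 1$ and applies Rouch\'e's theorem to locate a zero of the resulting polynomial near $w_0$. The construction in the proof of Proposition \ref{equalpmu}, via Lemma \ref{irred} and Corollary \ref{admissible}, in fact arranges this polynomial to be an admissible irreducible kneading polynomial, putting the approximating zero directly in $\Sigma$.

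For density outside $\mathbb{D}$, I would combine the renormalization identity \eqref{eq:renorm} with Lemma \ref{irredhigh}. Fix an inner parameter $c_1$ whose kneading polynomial $P_{c_1}$ is irreducible and has $\pm 1$ coefficients not all equal; by Lemma \ref{irred} together with Corollary \ref{admissible}, such $c_1$ is abundant. For any period-$2^n$ outer parameter $c_0$, equation \eqref{eq:renorm} yields $P_{c_1}(t^{2^n})$ as a factor of the kneading polynomial of $f_{c_0}\star f_{c_1}$, and by Lemma \ref{irredhigh} this factor is irreducible; since the growth rate of $f_{c_0}\star f_{c_1}$ is $s(f_{c_1})^{1/2^n}$, this is its minimal polynomial. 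Hence every $2^n$-th root of every Galois conjugate of $s(f_{c_1})$ lies in $\Sigma$. I would now let $c_1$ vary in a continuous admissibility- and irreducibility-preserving family so that some complex Galois conjugate $\rho(c_1)$ has $|\rho(c_1)|$ sweeping an interval $[a,b]\subset(1,2]$ with $b\geq a^2$; the nested intervals $[a^{1/2^n},b^{1/2^n}]$ of accessible moduli then overlap and cover a right-neighbourhood $(1,b]$ of $1$, while for each modulus the angular gap $2\pi/2^n$ can be made arbitrarily small by taking $n$ large.

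The hardest part will be the outside construction: exhibiting an explicit continuous family of admissible irreducible parameters along which some conjugate's modulus sweeps an interval wide enough to satisfy the overlap condition $b\geq a^2$, and ensuring that for each target modulus there is a compatible $n$ giving fine enough angular resolution. Once density inside and outside $\mathbb{D}$ has been established, closedness of $\Sigma$ yields $\{R^{-1}<|z|<R\}\subseteq\Sigma$ for some $R>1$.
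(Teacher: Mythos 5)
Your handling of the interior part is correct and mirrors the paper: Proposition \ref{equalpmu} reduces it to Bousch's result that $\Sigma_{\pm 1}$ contains $\{2^{-1/4}<|z|<1\}$, so only an outer annulus remains.

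For the exterior part, however, your approach has a genuine gap, and it is not the one you flag. The problem is a dimension count. You start from a one-parameter continuous family of parameters $c_1$, so for a fixed Galois conjugate $\rho(c_1)$ the image $\{\rho(c_1)\}$ is a curve in $\mathbb{C}$, and its $2^n$-th roots form $2^n$ curves. The union over all $n$ and all conjugates is a countable union of curves, hence a set of planar Lebesgue measure zero. Such a set can certainly have closure of positive measure, but that is precisely what needs to be proved, and your argument does not establish it. Concretely: fix a modulus $r\in(1,R)$. Then $n$ is essentially forced by the requirement $r^{2^n}\in[a,b]$ (one or two values of $n$ at most), and once $n$ and $r$ are fixed, $c_1$ is pinned down to a level set of $|\rho(\cdot)|$ and the accessible arguments at modulus $r$ are $\arg\rho(c_1)/2^n+2\pi k/2^n$, $k=0,\dots,2^n-1$ --- a finite set with gaps of size $2\pi/2^n$ for that fixed, bounded $n$. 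The phrase ``for each modulus the angular gap $2\pi/2^n$ can be made arbitrarily small by taking $n$ large'' is therefore not available: for a given $r$ you cannot increase $n$ freely. So at best you obtain density along a family of curves, not in an open set. This is exactly why the paper's Lemma \ref{nth-roots} (which is the closed-under-roots observation your argument rests on) is used only to show $\Sigma_{kn}\supseteq S^1$, a one-dimensional conclusion, and not to produce an annulus.

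The paper takes an entirely different route for the exterior, one that supplies the missing two-dimensional spreading by building in the self-similar structure directly. Lemma \ref{boundarycrit} gives a checkable criterion for $z^{-1}\in\Sigma$ in terms of an IFS-type covering: one must exhibit a bounded set $B$ stable under the maps $w\mapsto v+z^{m}w$ for $v$ ranging over a finite set $\Lambda$ of short $\pm 1$-combinations of powers of $z$, together with a membership condition guaranteeing admissibility and irreducibility (via Lemma \ref{irred}). Lemma \ref{paracrit} (proved in the appendix) shows that a large ball works for $B$ whenever $\Lambda$ spans and its convex hull contains the origin. Since the hypotheses of Lemma \ref{boundarycrit} are open in $z$, this places a full neighbourhood of each $\xi\in S^1\setminus\{\pm1\}$ inside $\Sigma$; the exceptional points $\pm1$ are handled separately (Proposition \ref{onecontained}, in the appendix) by replacing the ball with a parallelogram whose shape varies with $z$, following Odlyzko–Poonen. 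Compactness of $S^1$ then produces the annulus. If you want to pursue your renormalization idea, you would need an additional mechanism creating genuine two-dimensional density near each modulus, which is what the covering lemma provides; without it, the sketch cannot close.
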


\begin{figure} 
\includegraphics[width=0.8\textwidth]{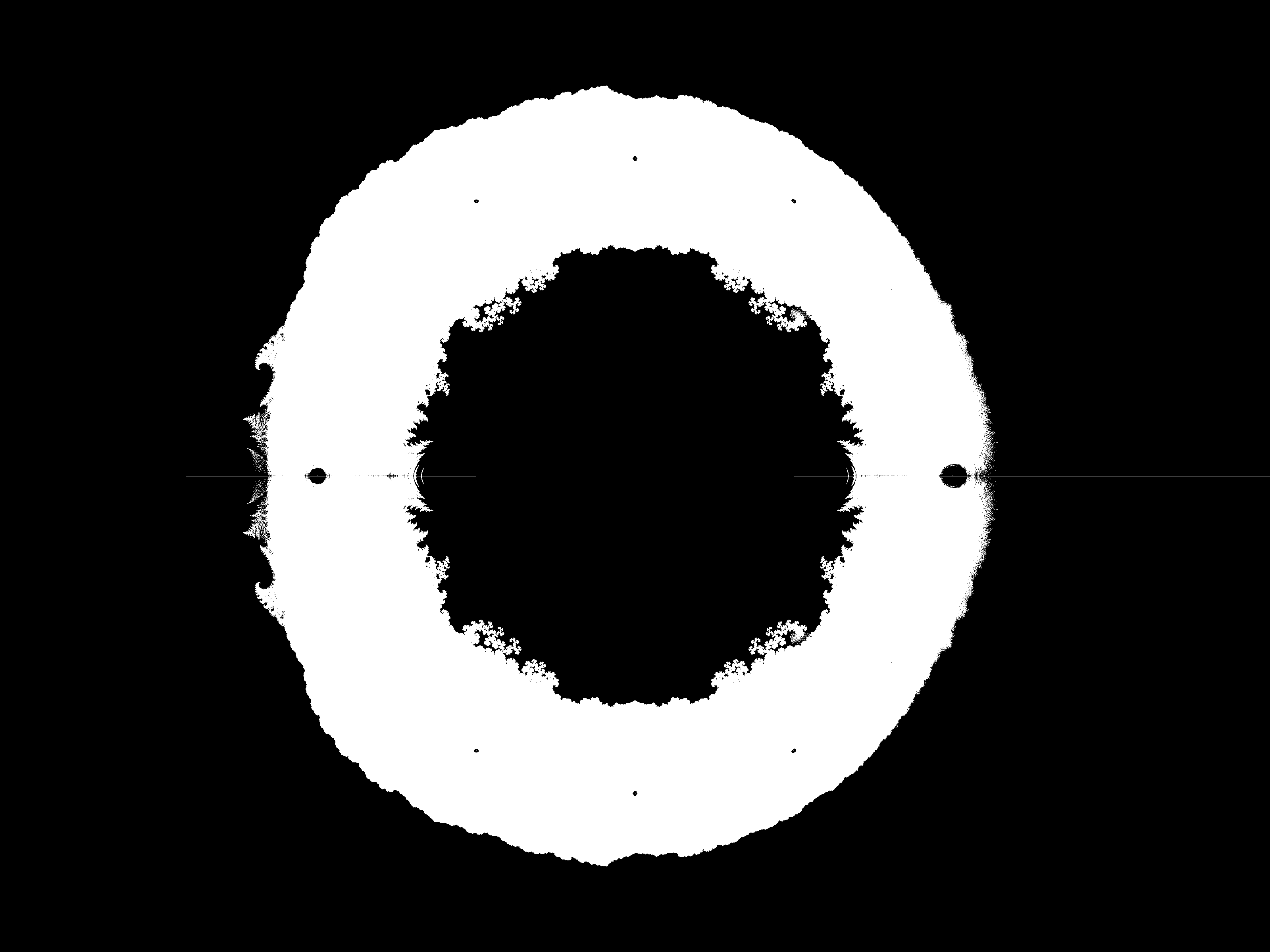} 
\caption{The boundary of the set $\Sigma$.}
\label{third}
\end{figure}

Bousch (\cite{B1}, Proposition 2) proves that the set $\Sigma_{\pm 1}$ contains the annulus $\{ z : 2^{-1/4} < |z| < 1 \}$, so by Proposition \ref{equalpmu} 
it is enough to prove that $\Sigma$ contains an annulus outside the unit circle, i.e. a set of the form $\{z : 1 < |z| < R \}$ for some $R > 1$. 
We shall use the following lemma (in the spirit of \cite{OP}, Lemma 3.1):

\begin{lemma} \label{boundarycrit}
Let $z \in \mathbb{D}$, and $m \geq 3$ an integer. Denote by $\Lambda$ the finite set
$$\Lambda := \{ \epsilon_0 + \epsilon_1 z + \dots + \epsilon_{m-1} z^{m-1} \ : \ \epsilon_k \in \{ \pm 1 \} \textup{, not all }\epsilon_k \textup{ equal}\}$$
and suppose there exists a bounded subset $B$ of $\mathbb{C}$ and an integer $n \geq 1$ such that the following hold: 
\begin{enumerate}
 \item 
we have the inclusion
$$B \subseteq \bigcup_{(w_1, \dots, w_n) \in \Lambda^n} w_1 + w_2 z^m + \dots + w_n z^{m(n-1)} + z^{mn} B;$$
\item 
$B$ contains the point 
$u_{2m}(z) := \frac{1 - z - z^2 - \dots - z^{2m-1}}{z^{2m}}.$ 
\end{enumerate}
Then $z^{-1}$ belongs to $\Sigma$.
\end{lemma}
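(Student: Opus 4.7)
The plan is to use the self-similarity in (1), starting from the distinguished point $u_{2m}(z)$ of (2), to exhibit $z$ as a zero of an admissible $\pm 1$ power series, and then approximate this series by admissible irreducible polynomials whose reciprocal zeros converge to $z^{-1}$. Iterating (1) produces sequences $(w_1^{(j)}, \dots, w_n^{(j)}) \in \Lambda^n$ and $b_j \in B$ with
$$u_{2m}(z) = \sum_{j=1}^r z^{mn(j-1)}\bigl(w_1^{(j)} + w_2^{(j)} z^m + \dots + w_n^{(j)} z^{m(n-1)}\bigr) + z^{mnr} b_r$$
for every $r \geq 0$. Because $|z|<1$ and $B$ is bounded, the error tends to zero and I obtain a convergent expansion $u_{2m}(z) = \sum_{k\geq 0} W_k z^{mk}$ with $W_k \in \Lambda$. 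Writing $W_k = \sum_{l=0}^{m-1} \epsilon_{l,k} z^l$ and re-indexing gives $u_{2m}(z) = \sum_{j\geq 0} \epsilon'_j z^j$ for certain signs $\epsilon'_j \in \{\pm 1\}$. Substituting the explicit formula for $u_{2m}(z)$ and clearing $z^{2m}$ yields the identity
$$\phi(z) := 1 - z - z^2 - \dots - z^{2m-1} - \sum_{j=0}^\infty \epsilon'_j z^{2m+j} = 0,$$
exhibiting $z$ as a zero of a $\pm 1$ power series $\phi(t)$ with $\phi(0)=1$.

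Next, I would verify admissibility of $\phi$ via Corollary \ref{admissible}. By construction $\phi$ begins with the coefficients $1, -1, -1, \dots, -1$ up through degree $2m-1$, so its initial runlength satisfies $N \geq 2m-1$. For the tail, every block $W_k$ contains at least one sign change by the defining property of $\Lambda$, so no constant run in the sequence $(\epsilon'_j)$ can cover an entire block of length $m$; hence every such run is confined to a suffix of one $W_k$ together with a prefix of the next, giving maximal runlength at most $2(m-1) = 2m-2$. Since $m \geq 3$, this gives $N \geq 2m-1 > 2m-2 \geq M$, and $\phi$ is admissible.

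Finally, to produce genuine elements of $\Sigma$, I would truncate the series and insert a free sign exactly as in the proof of Proposition \ref{equalpmu}, defining
$$P_R(t) := 1 - t - \dots - t^{2m-1} - \sum_{j=0}^{R-1} \epsilon'_j t^{2m+j} + \eta\, t^{2m+R},$$
with $R$ chosen of the form $2^K - 2m - 1$ so that $\deg P_R + 1$ is a power of $2$, and with $\eta \in \{\pm 1\}$ selected, via the same parity argument as in Proposition \ref{equalpmu}, so that the sum of the coefficients of $P_R$ is congruent to $2$ modulo $4$. Then Lemma \ref{irred} makes $P_R$ irreducible in $\mathbb{Z}[t]$, while the long initial run (together with the maximal runlength bound $2m-2$) makes $P_R$ an admissible kneading polynomial. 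Since $|\phi(z) - P_R(z)| = O(|z|^{2m+R})$, Rouch\'e's theorem applied in a small disk around $z$ produces a zero of $P_R$ converging to $z$ as $R \to \infty$; its reciprocal is therefore a Galois conjugate of the growth rate of the corresponding superattracting quadratic polynomial, and passing to the limit gives $z^{-1} \in \Sigma$. The main obstacle is this last step, where one must simultaneously guarantee the Eisenstein hypotheses of Lemma \ref{irred}, the runlength criterion on the periodic extension of $P_R$, and Rouch\'e-type quantitative control of the zero near $z$; everything else reduces to bookkeeping of the iteration in step one and of the runlengths in step two.
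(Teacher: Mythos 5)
Your overall strategy is the same as the paper's: iterate inclusion (1) starting from $u_{2m}(z)$ to produce a $\pm 1$ power series $\phi$ with $\phi(z)=0$; bound the initial run ($\geq 2m-1$) and the later runs ($\leq 2m-2$, from the ``not all equal'' constraint in $\Lambda$) to get admissibility; truncate at degree $2^K-1$ and adjust a sign to force the coefficient sum $\equiv 2 \pmod 4$ so Lemma \ref{irred} gives irreducibility; finish with Rouch\'e. The first two steps and the Rouch\'e step match the paper exactly.

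The difference, and the gap, is in the truncation-and-adjustment step. You append a fresh sign $\eta$ at the \emph{last} position $2^K-1$ and claim that ``the long initial run together with the maximal runlength bound $2m-2$'' makes $P_R$ admissible. But $\eta$ is then forced by the parity constraint, and you do not check that this forced value preserves the runlength bound. If the final run of the tail already has the maximal length $2m-2$ and the forced $\eta$ equals its sign, the run grows to $2m-1$; and since the periodic extension of $P_R$ restarts with the constant term $+1$, if that sign is $+1$ the run spills across the period boundary to length $2m$, while the initial runlength $N$ may be only $2m-1$. This defeats the criterion $N > M$ of Corollary \ref{admissible}. The paper handles exactly this point differently: instead of appending a sign at the end, it \emph{flips one existing} coefficient $\eta_{k_0}$ at a \emph{flexible} position $k_0$ with $\max\{2m, 2^{N-1}\} \leq k_0 \leq 2^N-1$, and invokes the combinatorial observation (in the footnote to that proof) that the flip position can always be chosen so the maximal runlength of the tail stays $\leq \max\{3,\, 2m-2\} = 2m-2$ (here $m \geq 3$ is used). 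That flexibility in where to flip is precisely what lets one satisfy the parity condition for Lemma \ref{irred} and the runlength condition for admissibility simultaneously; your proposal needs an analogous device, since fixing the adjustment at the terminal coefficient removes the room to maneuver.
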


\begin{proof}
From (2) and (1), we can write 
$$u = w_1 + w_2 z^m + \dots + w_n z^{m(n-1)} + z^{mn} b_1$$
with $u = u_{2m}(z)$ and some $(w_1, \dots, w_n) \in \Lambda^n$ and $b_1 \in B$; now, applying (1) recursively to $b_1$ we can find 
a sequence $\{b_N\}_{N \geq 1}$ of elements of $B$ and a sequence $\{w_k\}_{k \geq 1}$ of elements of $\Lambda$ such that for each $N$ we can write
$$u = \sum_{k = 1}^{n N} w_k z^{m(k-1)} + z^{m n N} b_N; $$
now, since $|z| < 1$ and $B$ is bounded we have in the limit
$$u = \sum_{k = 1}^\infty w_k z^{mk}$$
which can be rewritten as
$$0 = 1 - z - z^2 - \dots - z^{2m-1} + \sum_{k = 2m}^\infty \eta_k z^{k} \qquad \textup{with }\eta_k \in \{ \pm 1 \}.$$
Since we initially chose the $\epsilon_k$ not to be all equal, then the sequence $(\eta_k)_{k \geq 2m}$ does not 
contain any subsequence of $2m -1$ consecutive equal symbols, so the above power series is admissible and $z^{-1}$ belongs to the kneading 
spectrum $\Sigma_{kn}$.

In order to prove that $z^{-1}$ belongs to $\Sigma$, we still need to check that 
we can construct a sequence of admissible, irreducible polynomials whose coefficients converge to the 
sequence $(\eta_k)_{k \geq 0}$.
For each $N$, let us consider the truncation $(\eta_k)_{0 \leq k \leq 2^N-1}$ of the sequence $\eta$: 
if the sum $S := \sum_{k = 0}^{2^N-1} \eta_k$ is congruent to $2$ modulo $4$, then by Lemma \ref{irred} 
the polynomial $P_N(t) := \sum_{k = 0}^{2^N-1} \eta_k t^k$ is irreducible and admissible.
If the sum $S$ of the coefficients is instead divisible by $4$, we can flip one of the symbols $\eta_k$ so 
that the sum becomes congruent to $2$ and the sequence remains admissible. Precisely, 
we can find an index $k_0$ with $\max\{2m, 2^{N-1}\} \leq k_0 \leq 2^N-1$ such that the sequence
$(\eta'_k)_{k \geq 2m}$ defined as 
$$\eta'_k = \left\{ \begin{array}{ll} \eta_k & \textup{if }k \neq k_0 \\
                     -\eta_k & \textup{if }k = k_0
                    \end{array} \right.$$
still has at most $2m-2$ consecutive equal symbols\footnote{In general, the following 
is true: if $\sigma \in \{\pm 1 \}^n$ is any finite sequence and its maximum number of 
consecutive equal symbols is $M$, then we can flip one digit of $\sigma$ such that the new sequence $\sigma'$ has at most 
$\max\{3, M\}$ consecutive equal symbols.}, so that now
$\sum \eta'_k \equiv 2 \mod 4$ and the polynomial $P_N(t) := \sum_{k = 0}^{2^N-1} \eta'_k t^k$
is irreducible and admissible.
\end{proof}

We shall apply the lemma by taking the set $B$ to be a large ball around the origin: we shall 
need the following elementary lemma about convex sets, whose proof we postpone to the appendix.

\begin{lemma} \label{paracrit}
Let $v_1, \dots, v_n \in \mathbb{R}^d$ be non-zero vectors which span $\mathbb{R}^d$, and suppose 
that their convex hull $\Lambda$ contains the origin in its interior. Then there exists $R > 0$
such that any ball $B$ of radius at least $R$ centered at the origin satisfies the inclusion
$$\overline{B} \subseteq \textup{ int }\bigcup_{i = 1}^n (v_i + B)$$
where $\overline{B}$ denotes the closure and $\textup{int}$ the interior part.
\end{lemma}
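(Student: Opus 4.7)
The plan is to reduce the inclusion to an elementary convex-geometry estimate whose only nontrivial input is that the hypothesis $0 \in \text{int}(\Lambda)$ forces a uniform positivity of $\max_i\langle u, v_i\rangle$ over unit directions. Denote by $R' \geq R$ the radius of $B$. Since each translate $v_i + \text{int}(B)$ is open, a point $x$ lies in $\text{int}\bigcup_i(v_i + B)$ as soon as $|x - v_i| < R'$ for some $i$; hence it suffices to show that, for $R$ large enough, every $x$ with $|x| \leq R'$ satisfies $|x - v_i| < R'$ for some $i$.

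The key scalar invariant is
$$c_{\min} := \min_{|u|=1}\,\max_i\,\langle u, v_i\rangle,$$
which I claim is strictly positive. Indeed, if $\max_i\langle u_0, v_i\rangle \leq 0$ for some unit $u_0$, then every $v_i$, and therefore $\Lambda$, would lie in the closed half-space $\{y : \langle u_0, y\rangle \leq 0\}$, contradicting the assumption that $\Lambda$ contains a neighbourhood of the origin. Continuity of $u \mapsto \max_i\langle u, v_i\rangle$ on the compact unit sphere upgrades this to the desired uniform bound $c_{\min} > 0$.

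Given $x = ru$ with $|u|=1$ and $0 \leq r \leq R'$, I would select an index $i$ with $\langle u, v_i\rangle \geq c_{\min}$, set $V := \max_j|v_j|$, and expand
$$(R')^2 - |x - v_i|^2 \;=\; \bigl((R')^2 - r^2\bigr) + 2r\langle u, v_i\rangle - |v_i|^2 \;\geq\; \bigl((R')^2 - r^2\bigr) + 2r c_{\min} - V^2.$$
The right-hand side is a concave quadratic in $r$ on $[0, R']$, so its minimum on this interval is attained at an endpoint, with values $(R')^2 - V^2$ and $2R' c_{\min} - V^2$ respectively. Both are positive provided $R > \max\{V,\, V^2/(2c_{\min})\}$, which, since $R' \geq R$, yields $|x - v_i| < R'$ as required.

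The only delicate case is the boundary $|x| = R'$: there the slack factor $(R')^2 - r^2$ vanishes, and the strict inequality $|x - v_i| < R'$ has to be produced directly from the positivity $c_{\min} > 0$. This is exactly where the interior-point hypothesis on $\Lambda$ enters; for strictly interior $x$ the same calculation goes through with room to spare, so I expect this boundary case to be the only real obstacle.
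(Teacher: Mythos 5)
Your argument is correct and shares the paper's key step: both establish the uniform positivity of $c := \min_{|u|=1}\max_i\langle u, v_i\rangle$ from the hypothesis $0 \in \operatorname{int}\Lambda$ (the paper by writing $0 = \sum\lambda_i v_i$ and pairing with $u$, you by a separating-hyperplane contradiction), and both then deduce $|x-v_i| < R'$ for a suitable $i$; the paper closes with a dichotomy that bounds $|x|$ absolutely and then uses $v_1$, while you compute directly with the concave quadratic in $r$ — a minor variation. Your final paragraph is unfounded hedging: the endpoint evaluation $2R'c_{\min} - V^2 > 0$, guaranteed by $R' \geq R > V^2/(2c_{\min})$, already produces the strict inequality at $|x| = R'$, so the calculation you wrote is complete and there is no remaining obstacle.
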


\begin{proposition}
Let $z \in S^1$, $z \neq \pm 1$. Then a neighbourhood of $z$ is contained in $\Sigma$. 
\end{proposition}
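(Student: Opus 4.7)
The plan is to combine two tools. On one side, Proposition \ref{equalpmu} together with Bousch's theorem (cited from \cite{B1}) that $\Sigma_{\pm 1}$ contains the annulus $\{2^{-1/4} < |w| < 1\}$ shows that every $w \in \mathbb{D}$ with $|w|$ sufficiently close to $1$ lies in $\Sigma$; this covers the one-sided neighbourhood of $z$ lying inside the unit disk. On the other side, I apply Lemma \ref{boundarycrit} to parameters $\zeta \in \mathbb{D}$ close to $z^{-1}$: each such $\zeta$ produces $\zeta^{-1} \in \Sigma \cap \mathbb{E}$, and by varying $\zeta$ one obtains every point of $\mathbb{E}$ sufficiently close to $z$. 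Since $\Sigma$ is closed (being defined as a closure), the union of the inside one-sided neighbourhood, the outside one-sided neighbourhood, and $z$ itself is a full two-dimensional open neighbourhood of $z$ contained in $\Sigma$.

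The main task is thus to verify the hypotheses of Lemma \ref{boundarycrit} for $\zeta \in \mathbb{D}$ arbitrarily close to $z^{-1}$. Condition (2), $u_{2m}(\zeta) \in B$, is satisfied by choosing $B$ a sufficiently large closed disk centered at the origin, since $u_{2m}$ is bounded in a compact neighbourhood of $z^{-1}$. For the covering condition (1) I would invoke Lemma \ref{paracrit}. The crucial geometric input is that $z \in S^1 \setminus \{\pm 1\}$ forces $z^{-1} \notin \mathbb{R}$, so that $\{1, z^{-1}\}$ is an $\mathbb{R}$-basis of $\mathbb{C}$; consequently, the finite set $\Lambda(z^{-1})$ is centrally symmetric (under $\underline{\epsilon} \mapsto -\underline{\epsilon}$), spans $\mathbb{R}^2$, and thus contains the origin in the interior of its convex hull. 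These properties persist for the enlarged set of translates
$$V_n(\zeta) := \Big\{ \sum_{k = 1}^{n} w_k \zeta^{m(k-1)} \ : \ \underline{w} \in \Lambda(\zeta)^n \Big\}$$
and for $\zeta$ in a neighbourhood of $z^{-1}$, by continuity.

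The main obstacle is the mismatch between Lemma \ref{paracrit}, which guarantees $\overline{B} \subseteq \textup{int} \bigcup (v_i + B)$, and the actual requirement $B \subseteq \bigcup (v_{\underline{w}} + \zeta^{mn} B)$ of Lemma \ref{boundarycrit}, in which $B$ is shrunk by the factor $|\zeta^{mn}| < 1$. As $\zeta$ approaches $S^1$ this factor tends to $1$ and the covering becomes progressively harder. The remedy is to exploit the exponential size $|V_n(\zeta)| = (2^m-2)^n$: for any fixed $\zeta \in \mathbb{D}$ one has $|\zeta^{mn}| \to 0$ as $n \to \infty$, while $V_n(\zeta)$ grows exponentially, so with $n$ large enough the translates become dense enough in $B$ to absorb the shrinkage. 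The required $(m,n,B)$ depends on $\zeta$, but for each fixed choice the set of admissible $\zeta$ is open in $\mathbb{D}$, and the union of these open sets covers an open one-sided neighbourhood of $z^{-1}$ in $\mathbb{D}$; passing through $\zeta \mapsto \zeta^{-1}$ produces the desired open one-sided neighbourhood of $z$ in $\mathbb{E}$, completing the argument.
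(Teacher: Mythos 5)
Your high-level decomposition (inside from Bousch via Proposition \ref{equalpmu}, outside from Lemma \ref{boundarycrit}, closure to fill in $S^1$) matches the paper, and invoking Lemma \ref{paracrit} for the covering hypothesis is also the right tool. However, the crux of the argument — verifying condition (1) of Lemma \ref{boundarycrit} — is not correctly handled, and this is a genuine gap.

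The difficulty you correctly identify is the contraction factor $|\zeta^{mn}| < 1$, but your proposed remedy (fix $\zeta \in \mathbb{D}$, let $n \to \infty$, and argue that the exponentially many translates $V_n(\zeta)$ become dense enough to offset the shrinkage) is a heuristic, not a proof: an exponential count of translate centers does not by itself imply that every point of $B$ is within distance $|\zeta^{mn}|R$ of one of them, and that density claim is precisely what would need to be established. The paper sidesteps the issue entirely by the following observation you missed: choose the parameter \emph{on} the circle, $\xi \in S^1$. Then $\xi^{2m}$ has modulus $1$, so since $B$ is a ball centered at the origin we have $\xi^{2m}B = B$, and the covering condition becomes exactly $B \subseteq \textup{int}\bigcup_{w \in \Lambda}(w+B)$, which is what Lemma \ref{paracrit} delivers. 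The ``$\textup{int}$'' gives an open amount of slack, and by continuity (of $\Lambda(z)$, of $z^{2m}B$, and of $u_{2m}(z)$) the inclusions of Lemma \ref{boundarycrit} persist, without interior, for all $z \in \mathbb{D}$ in a neighbourhood of $\xi$. This works already with $n$ fixed (in fact $n = 2$), so no growing-$n$ argument is needed. Your remark that the covering ``becomes progressively harder as $\zeta$ approaches $S^1$'' is also backwards: $|\zeta^{mn}| \to 1$ makes the covering easier, and at $S^1$ itself it reduces to the case paracrit handles.

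A second, smaller gap: you assert that $\Lambda(z^{-1})$ spans $\mathbb{R}^2$ and has the origin in the interior of its convex hull merely because $\{1, z^{-1}\}$ is an $\mathbb{R}$-basis. This does not follow automatically — the higher powers $\zeta^2, \dots, \zeta^{m-1}$ can be expressed in terms of $1$ and $\zeta$, and for some roots of unity and some choices of $m$ the set $\Lambda$ can be degenerate or even contain $0$ (try $\zeta$ a primitive $6$th root of unity with $m=3$: then $1 - \zeta + \zeta^2 = 0$ gives $0 \in \Lambda$). The paper avoids this by choosing $m$ and the coefficients $\epsilon_k$ \emph{depending on} $\xi$, so that $\xi^m \neq \pm 1$ and $v := \sum\epsilon_k\xi^k \neq 0$; then the four points $\{\pm v \pm \xi^m v\}$ form a nondegenerate parallelogram and Lemma \ref{paracrit} applies cleanly. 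You should make an analogous case-by-case choice rather than relying on the unproven assertion that the full $\Lambda$ is nondegenerate for every $m \geq 3$.
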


\begin{proof}
Given $\xi \in S^1$, $\xi \neq \pm 1$, let us choose an integer $m \geq 3$  and coefficients $\epsilon_0, \epsilon_2, \dots, \epsilon_{m-1} \in \{ \pm 1 \}$ 
such that $\xi^m \neq \pm 1$, the vector $v := \sum_{k = 0}^{m-1} \epsilon_k \xi^k$ is non-zero and 
the $\epsilon_k$ are not all equal\footnote{This can always be done: e.g., if $\xi$ is not a $6^{th}$ root of unity we can choose $m = 3$ and $v = 1 - \xi + \xi^2$; 
if $\xi^3 = 1$, pick $m = 4$ and $v = 1 + \xi + \xi^2 - \xi^3$; finally, if $\xi^3  = -1$, pick $m = 4$ and $v = 1 - \xi + \xi^2 + \xi^3$.}.
Now, the four points in the set  
$$\Lambda := \{ \pm v \pm \xi^m v \}$$
are the vertices of a parallelogram which contains the origin in its interior, hence by Lemma \ref{paracrit} there exists a ball $B$ 
centered at the origin such that the inclusion
\begin{equation} \label{eq:incl}
B \subseteq \textup{ int }\bigcup_{w \in \Lambda} \left( w + \xi^{2m} B \right)
\end{equation}
holds. Moreover, we can choose the radius of $B$ to be large enough so that the point 
$u := \frac{1 - \xi - \xi^2 - \dots - \xi^{2m-1}}{\xi^{2m}}$ belongs to the interior of $B$.
Now, we see that the conditions of Lemma \ref{boundarycrit} are verified for each $z \in \mathbb{D}$
in a neighbourhood of $\xi$, so by the Lemma the set $\Sigma$ contains a neighbourhood of $\xi^{-1}$
and since this holds for all $\xi \in S^1 \setminus \{ \pm 1 \}$ the claim is proven.  
\end{proof}

\begin{proposition} \label{onecontained}
The points $z = \pm 1$ belong to the interior of the set $\Sigma$.
\end{proposition}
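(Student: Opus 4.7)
The plan is to produce an open two-dimensional neighbourhood of $z = 1$ inside $\Sigma$ by covering a small disk $D_\rho(1) := \{w : |w - 1| < \rho\}$ with four overlapping pieces handled by results already in hand; the conclusion for $z = -1$ will then follow from the fact that $\Sigma$ is closed under $w \mapsto \pm\sqrt{w}$, a consequence of Lemma \ref{irredhigh}.

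Three of the four pieces are immediate. First, points $w \in D_\rho(1)$ with $|w| < 1$ lie in $\Sigma$ by Proposition \ref{equalpmu} together with Bousch's inclusion $\{2^{-1/4} < |z| < 1\} \subset \Sigma_{\pm 1}$. Second, real points $w \in D_\rho(1) \cap [1, 2]$ lie in $\Sigma$ by Corollary \ref{contains12}. Third, points $w \in D_\rho(1) \cap (S^1 \setminus \{1\})$ lie in $\Sigma$ by the preceding proposition.

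The remaining piece consists of non-real $w \in \mathbb{E}$ close to $1$. For each such $w$ I would set $z := w^{-1}$, which lies in $\mathbb{D} \setminus \mathbb{R}$ close to $1$, and repeat the construction from the preceding proposition but anchored at $\xi = 1$: take $m = 3$ and $v := 1 - z + z^2$, noting that $v \neq 0$ for $z$ close to $1$ and that $v, z^m v$ are $\mathbb{R}$-linearly independent as soon as $z^m \notin \mathbb{R}$, which for $m = 3$ is automatic off the real axis in a neighbourhood of $1$. The parallelogram $\{\pm v \pm z^m v\}$ contains the origin in its interior, so Lemma \ref{paracrit} yields a ball $B$ (with radius growing, but still finite, as $\operatorname{Im} z \to 0$) satisfying the required self-similar inclusion, and the point $u_{2m}(z)$, which tends to the bounded value $u_{2m}(1) = 2 - 2m$, lies inside $B$. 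Lemma \ref{boundarycrit} then gives $w = z^{-1} \in \Sigma$. Varying $w$ over all non-real points of $D_\rho(1) \cap \mathbb{E}$ and taking the union with the previous three pieces shows $D_\rho(1) \subset \Sigma$, so $1$ is interior.

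For $z = -1$, observe that any irreducible admissible $\pm 1$-coefficient polynomial $P(x)$ with not-all-equal coefficients gives, by Lemma \ref{irredhigh}, an irreducible polynomial $P(x^2)$; by the tuning formula \eqref{eq:renorm} applied to the period-$2$ superattractor, $P(x^2)$ appears as an irreducible factor of an admissible kneading polynomial, and its roots (the two square roots of each root of $P$) therefore lie in $\Sigma$. Hence $\Sigma$ is closed under $w \mapsto \pm\sqrt{w}$, and the already-constructed neighbourhood of $1$ lifts under this map to open neighbourhoods of both $\pm 1$.

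The main obstacle is the fourth piece: as $z \in \mathbb{D}\setminus\mathbb{R}$ approaches the real axis, the parallelogram $\{\pm v \pm z^m v\}$ degenerates and the ball $B$ produced by Lemma \ref{paracrit} blows up. The argument survives because $u_{2m}(z)$ remains uniformly bounded for $z$ close to $1$, so condition (2) of Lemma \ref{boundarycrit} is preserved, while condition (1) continues to hold because the parallelogram remains non-degenerate as long as $z$ stays off the real axis.
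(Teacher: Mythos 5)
The decomposition of the neighbourhood of $1$ into four pieces is sound, and pieces 1–3 are indeed immediate from the results you cite. The gap is in piece 4, and it is a real one. You write that ``Lemma \ref{paracrit} yields a ball $B$ ... satisfying the required self-similar inclusion,'' but that is not what Lemma \ref{paracrit} gives: it gives $\overline{B} \subseteq \operatorname{int} \bigcup_i (v_i + B)$, whereas condition (1) of Lemma \ref{boundarycrit} requires $B \subseteq \bigcup (v_i + z^{2m}B)$, and for $|z|<1$ the set $z^{2m}B$ is a \emph{strictly smaller} ball than $B$. In the earlier proposition this mismatch was invisible because there $\xi\in S^1$, so $\xi^{2m}B=B$ exactly, and the needed inclusion for nearby $z$ followed by an open-condition perturbation around a fixed $\xi$ and a fixed $B$. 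You cannot replay that argument here: you are varying $z$ off the axis, so both the ball $B=B(z)$ and the contraction factor $|z|^{2m}$ depend on $z$, and you must check the quantitative inequality yourself. Tracking constants through the proof of Lemma \ref{paracrit}, the ``margin'' in $\overline{B}\subseteq\operatorname{int}\bigcup(v_i+B)$ is of order $c$ (the inradius-type constant of the parallelogram $\{\pm v\pm z^m v\}$), while the required radius is of order $1/c$, so the inclusion with $z^{2m}B$ requires roughly $c \gtrsim (1-|z|^{2m})\cdot(1/c)$, i.e. $c^2 \gtrsim 1-|z|^{2m}$. As $z\to 1$ off the real axis, $c \sim |\operatorname{Im}(z^m)| \sim |z-1||\sin\arg(z-1)|$ while $1-|z|^{2m}\sim |z-1||\cos\arg(z-1)|$, so the inequality fails for all $z$ in the sector except a vanishingly thin sliver near the unit circle. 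In short, non-degeneracy of the parallelogram is necessary but nowhere near sufficient; the ball produced by Lemma \ref{paracrit} does not satisfy the hypothesis of Lemma \ref{boundarycrit} for most $z$ near $1$.

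This is exactly the obstruction the paper flags in the sentence preceding the appendix: near $\pm 1$ one ``can no longer choose a large ball to play the role of the bounded set $B$,'' and must instead take a parallelogram $B_{\delta,R}$ whose shape co-varies with $z$. The reason a parallelogram succeeds where a ball fails is that, written in the basis $\{1,\delta\}$ with $\delta=z-1$, multiplication by $z$ acts (to first order) by the unipotent shear $T=\left(\begin{smallmatrix}1&0\\1&1\end{smallmatrix}\right)$, which preserves areas and maps a suitable square $B$ to a long thin parallelogram in a controlled way; the covering lemma then has to be re-proved for this shear action, which is the content of Lemma \ref{lattice}. Your ball-based argument has no replacement for Lemma \ref{lattice}, and Lemma \ref{paracrit} alone cannot supply it.

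A smaller remark on your $z=-1$ reduction: the claim that $\Sigma$ is closed under $w\mapsto\pm\sqrt{w}$ is plausible but not free. Lemma \ref{irredhigh} applies to the kneading polynomial, which has $\pm 1$ coefficients, not to an arbitrary irreducible factor of it; so your square-root argument needs the additional input that points coming from superattracting parameters with \emph{irreducible} kneading polynomial are dense in $\Sigma$, which in turn rests on Lemma \ref{densityprimes} and Proposition \ref{alguntuned}. That could be made to work, but it is not a one-line corollary. The paper avoids this by treating $z=-1$ directly with the mirrored parallelogram $\widetilde{B}_{\delta,R}$ and the shear $\widetilde{T}$.
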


The proof in this case is a bit more complicated, so it will be postponed to the appendix.
It is still based on Lemma \ref{boundarycrit}, but we can  no longer choose a large ball 
to play the role of the bounded set $B$: instead, 
as in the proof of (\cite{OP}, Proposition 3.3), we have to choose a parallelogram whose shape
 varies with $z$.

\section{$\Sigma$ is path-connected}

Let us finally turn to the proof of the following 

\begin{theorem} \label{sigmapc}
The set $\Sigma \cap \{ z \ : \ |z| \geq 1 \}$ is path-connected.
\end{theorem}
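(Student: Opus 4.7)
The plan is to apply the Hahn--Mazurkiewicz theorem: every compact, connected, locally connected metric space is path-connected (indeed a continuous image of $[0,1]$). Compactness of $\Sigma \cap \{|z| \geq 1\}$ is immediate from Lemma \ref{basicincl}, so it suffices to establish connectedness and local connectedness.

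For connectedness, Proposition \ref{annulus} provides the annulus $A := \{1 \leq |z| < R\} \subseteq \Sigma$ as a connected backbone sitting inside $\Sigma \cap \{|z| \geq 1\}$. The component $C$ of $\Sigma \cap \{|z| \geq 1\}$ containing $A$ must exhaust the whole set. To see this, I would pick any $z_0 \in \Sigma$ with $|z_0| \geq R$; by definition $z_0$ is a limit of Galois conjugates of $1/s(f_{c_n})$ for superattracting parameters $c_n$. Using the continuous trace map $c \mapsto \textup{Tr}(c)$ of Section \ref{section:kns}, one can follow any chosen zero of $K_{c_n}$ along the parameter interval from $c_n$ down to a parameter whose growth rate lies just above $1$, at which point the zero enters $A$. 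The resulting continuous arc lies in $\Sigma_{kn} \cap \{|z| \geq 1\}$ but need not lie in $\Sigma$. I would therefore approximate each point of the arc by actual points of $\Sigma$, using the irreducibility lemmas of Section~4: truncating the admissible kneading series and flipping one sign to achieve the $2 \pmod 4$ congruence of Lemma \ref{irred} produces nearby irreducible admissible polynomials whose zeros lie in $\Sigma$, while Lemma \ref{irredhigh} ensures that irreducibility is preserved under the substitutions $t \mapsto t^{2^n}$ coming from renormalization \eqref{eq:renorm}. In this way one chains $z_0$ to $A$ within $\Sigma$.

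Local connectedness is the harder step. At interior points of $\Sigma$ (in particular all of $A$) nothing is to be proved. At $z_0$ with $|z_0| \geq R$ I would invoke Lemma \ref{lclemma}(2), as in the proof of Proposition \ref{outside}. The naive assignment $c \mapsto$ (zeros of the minimal polynomial of $1/s(f_c)$) is only defined at superattracting parameters and jumps in cardinality whenever $c$ enters a small copy of the Mandelbrot set, because renormalization makes kneading polynomials reducible through \eqref{eq:renorm}. The remedy is to replace this discontinuous family by a continuously varying family of \emph{irreducible} admissible polynomials, constructed by the truncate-and-flip scheme already used in Propositions \ref{equalpmu} and \ref{annulus}; the resulting compact-set-valued map satisfies the hypotheses of Lemma \ref{lclemma}(2), and the conclusion follows.

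The main obstacle throughout is precisely this discontinuity of irreducibility. Unlike the trace map $c \mapsto \textup{Tr}(c)$, which is automatically continuous and softly delivers Proposition \ref{outside} for $\Sigma_{kn}$, the set $\Sigma$ is sensitive to the factorization of kneading polynomials into irreducibles, and this factorization behaves wildly across the parameter interval $[-2,1/4]$. Bypassing this by manufacturing continuous families of irreducible approximants via Lemmas \ref{irred} and \ref{irredhigh} is where the technical weight of the proof lies.
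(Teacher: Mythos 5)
Your overall strategy (compact + connected + locally connected $\Rightarrow$ path-connected via Hahn--Mazurkiewicz) is in principle a valid shortcut, and it is genuinely different from the paper's approach, which establishes path-connectivity \emph{directly} by lifting paths using the des Jardins--Knill lemma (Lemma \ref{lifting} / Lemma \ref{liftknead}). However, both of the two pillars you need are left essentially unproved, and there are concrete gaps in what you sketch.

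\textbf{Connectedness.} The step ``approximate each point of the arc by actual points of $\Sigma$, using the irreducibility lemmas'' does not establish connectedness. The arc you construct lives in $\Sigma_{kn}$; showing that each of its points is a limit of points of $\Sigma$ only proves the arc lies in $\overline{\Sigma}$. It does not prove that $z_0$ and the annulus $A$ lie in the same connected component of $\Sigma \cap \{|z|\geq 1\}$. What the paper does instead is first prove the structural Lemma \ref{finitecomp}: there exist $0<R_0<R<1$ and integers $p,n$ such that
$$\Sigma^{-1}\cap\mathbb{D} = \{R_0<|z|<1\} \cup \bigcup_{k=1}^r \Sigma_{I_k,R},$$
where the $I_k$ are the finitely many connected components of $N_{p,n}$. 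This reduction, together with Proposition \ref{alguntuned} (characterizing $\Sigma\cap\mathbb{E}$ via the set $N^\star$), is what makes the parameter space effectively a finite union of intervals. Your proposal never produces anything like this decomposition, which is the heart of the argument.

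\textbf{Chaining across gaps.} Even granting the decomposition, one needs to pass continuously from one component $I_{k-1}$ to its neighbor $I_k$, i.e.\ across a removed small Mandelbrot interval $(c_1,c_0)$. The paper's key observation here is the fiber inclusion $F_{c_0}\subseteq F_{c_1}$, which follows from the explicit computation $K_{c_1}(t) = K_{c_0}(t)(1-2t^p)$. Nothing in your sketch addresses how to get across these gaps; ``truncate and flip'' produces a nearby irreducible polynomial but gives no mechanism for continuity in the parameter.

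\textbf{Local connectedness.} Your proposed ``continuously varying family of irreducible admissible polynomials'' is not constructed, and it is very unlikely to exist in the form you describe: the truncate-and-flip scheme of Lemma \ref{boundarycrit} is a pointwise approximation device, not a continuous parametrization. The paper instead applies Lemma \ref{lclemma}(2) directly to each $\Sigma_{I_k,R}$, using the continuity of the ordinary trace map $c\mapsto F_c$ restricted to the connected interval $I_k$; the finiteness of the decomposition then upgrades this to local connectedness of the full union. No continuous family of \emph{irreducible} polynomials is ever needed once Lemma \ref{finitecomp} is in place, because irreducibility has already been absorbed into the set-theoretic identity $\Sigma^{-1}\cap\mathbb{D} = \text{annulus} \cup \bigcup_k \Sigma_{I_k,R}$.

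In short: your reduction to Hahn--Mazurkiewicz is a legitimate reorganization, but as written the proposal postpones all the real work to two unproved claims, and the ideas you gesture at (approximating an arc pointwise, a continuous family of irreducible polynomials) do not supply the needed arguments. The missing ingredients are Proposition \ref{alguntuned}, Lemma \ref{finitecomp}, and the fiber nesting $F_{c_0}\subseteq F_{c_1}$ at the boundary of small Mandelbrot sets.
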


\subsection{Lifting lemma and path-connectivity} \label{section:lifting}

The essential idea to prove path-connectivity is that the three-dimensional 
set $\widehat{\Sigma}$ defined in equation \eqref{sigmahat} ``fibers'' over an interval which is path-connected, so 
we can lift continuous paths in the base to continuous paths in $\widehat{\Sigma}$, 
and then project them to the other coordinate to get a continuous path in 
$\Sigma_{kn}$ or $\Sigma$. However, the issue of irreducibility of kneading 
determinants creates further complications.
 
The following topological tool is proven in \cite{OP}, where it is attributed
to D. des Jardins and E. Knill.

\begin{lemma}[\cite{OP}, Lemma 5.1] \label{lifting}
Let $M$ be a Hausdorff topological space and let $\pi : M^n \to M^n/S_n$ be the projection map onto the 
set of unordered $n$-tuples.
Then every continuous map $f : [0, 1] \to M^n/S_n$ can be lifted to 
a continuous map $g : [0, 1] \to M^n$ such that $f = \pi \circ g$.
\end{lemma}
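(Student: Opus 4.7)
The plan is to prove the lifting lemma by a local-to-global argument: first I would establish that $f$ admits a continuous lift on an open neighborhood of each $t \in [0,1]$; then I would extract a finite such cover using compactness of $[0,1]$; and finally I would glue adjacent local lifts using the $S_n$-action on the fibers of $\pi$.

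For the local lifting, I fix $t_0 \in [0,1]$ and write the multiset $f(t_0)$ as $\{q_1^{m_1}, \ldots, q_r^{m_r}\}$, where $q_1, \ldots, q_r$ are the distinct underlying points with multiplicities $m_j$ satisfying $\sum_j m_j = n$. Using the Hausdorff hypothesis I pick pairwise disjoint open neighborhoods $U_1, \ldots, U_r$ of the $q_j$'s and consider the subset $W \subseteq M^n/S_n$ consisting of multisets having exactly $m_j$ points in $U_j$ for each $j$. This $W$ is open because $\pi^{-1}(W)$ is the union of products $U_{j(1)} \times \cdots \times U_{j(n)}$ taken over all slot-assignments $j : \{1, \ldots, n\} \to \{1, \ldots, r\}$ with $\#\{i : j(i) = k\} = m_k$; hence $I := f^{-1}(W)$ is an open neighborhood of $t_0$. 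Fixing one such slot-assignment reduces the problem to independently lifting, within each block, a continuous path into $U_j^{m_j}/S_{m_j}$ starting from the fully diagonal point $(q_j, \ldots, q_j)$. Off the branch locus (times where the block multiset has a repeated entry) the projection $\pi$ restricts to a covering map, so the lift extends uniquely and continuously by covering-space theory; at a branch time the value of the lift is forced onto the appropriate diagonal stratum, and continuity there is automatic because all collided coordinates converge to a common value regardless of which ordering was used on either side.

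Given the local lifts, the intervals $\{I(t_0)\}_{t_0 \in [0,1]}$ form an open cover of the compact space $[0,1]$, so a Lebesgue-number argument produces a partition $0 = s_0 < s_1 < \ldots < s_N = 1$ with each closed interval $[s_{i-1}, s_i]$ contained in some $I(t_0)$, and hence carrying a continuous lift $h_i : [s_{i-1}, s_i] \to M^n$ of $f$. I then glue inductively: set $g := h_1$ on $[0, s_1]$ and, assuming $g$ is a continuous lift on $[0, s_i]$, use $\pi(g(s_i)) = f(s_i) = \pi(h_{i+1}(s_i))$ to pick $\sigma \in S_n$ with $g(s_i) = \sigma \cdot h_{i+1}(s_i)$. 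Replacing $h_{i+1}$ by the permuted lift $\sigma \cdot h_{i+1}$ (still continuous, since coordinate permutation is a self-homeomorphism of $M^n$) makes it agree with $g$ at $s_i$, so concatenation produces a continuous extension to $[0, s_{i+1}]$, and iteration all the way to $i = N$ finishes the construction. The main obstacle is the local lifting step: the continuous choice of ordering among the $m_j$ collided points inside $U_j^{m_j}$ is genuinely subtle because the branch locus within $I$ may be topologically complicated, but the diagonal-convergence observation handles continuity at branch times while the covering-map structure handles everything else, after which the compactness and gluing steps are routine.
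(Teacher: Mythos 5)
Your overall skeleton (local lifts, a Lebesgue-number subdivision, and gluing adjacent lifts by a permutation, which is indeed legitimate since permuting coordinates is a homeomorphism of $M^n$) is fine; note that the paper itself gives no proof and simply cites (\cite{OP}, Lemma 5.1). The genuine gap is in your local lifting step, which is exactly where the whole difficulty of the lemma sits. The block decomposition at $t_0$ only separates points that are distinct \emph{at} $t_0$; inside a block, at times arbitrarily close to $t_0$, the $m_j$ points can partially collide in patterns that change infinitely often, and the ``branch locus'' $C\subseteq I$ can be any closed set (a Cantor set, say), so $I\setminus C$ has infinitely many components, on each of which covering-space theory produces a lift only after an independent choice of starting point. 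Your claim that continuity at a branch time is ``automatic because all collided coordinates converge to a common value'' is valid only at the \emph{totally} diagonal stratum, where every coordinate of any lift is trapped in any prescribed neighbourhood of the common point. At a partial collision, say the block multiset is $\{p,p,q\}$ with $p\neq q$, the fibre consists of the three distinct points $(p,p,q)$, $(p,q,p)$, $(q,p,p)$: nothing is forced, and the one-sided limits of lifts chosen independently on the (possibly infinitely many) neighbouring components need not agree on which slot carries $q$. Since such partial-collision times can accumulate on one another, this cannot be repaired junction by junction; and in the case $r=1$ (all $n$ points equal at $t_0$) your ``reduction'' returns the original problem at the same size $n$, so the local step is in effect circular.

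The proof of des Jardins and Knill reproduced in (\cite{OP}, Lemma 5.1) avoids this by inducting on $n$ and stratifying by the set $D$ of times at which $f(t)$ is $n$ copies of a single point, rather than by the fat diagonal. Off $D$, every time has a neighbourhood on which the multiset splits into at least two blocks of size strictly less than $n$ (this is where the strict decrease comes from), so the inductive hypothesis applies blockwise; one lifts over compact subintervals of each component of $[0,1]\setminus D$, glues with permutations exactly as in your final step, and exhausts the component by compacts. On $D$ the lift is defined to be the diagonal value, and there --- and only there --- your automatic-continuity observation is correct. If you reorganize the argument along these lines (total diagonal as the distinguished stratum, induction on $n$ elsewhere), your gluing machinery completes the proof.
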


Given $R < 1$, we shall denote by $\mathbb{D}_R$ the disk centered at the origin of radius $R$, and by $\widetilde{\mathbb{D}}_R$ 
the one-point compactification of $\mathbb{D}_R$. 
Moreover, we shall use the following set:
$$\widehat{\Sigma}_{I, R} := \{ (c, z) \in I \times \mathbb{D}_R \ : \ K_c(z) = 0 \} \cup I \times \{\infty\}$$
where $\infty$ denotes the boundary point of the one-point compactication of $\mathbb{D}_R$.

By applying the previous lemma to kneading determinants, we get the following path lifting 
property.

\begin{lemma} \label{liftknead}
Let $I := [a, b] \subseteq [-2, 1/4]$ a closed interval, $R < 1$ and $z \in \mathbb{D}_R$ such that 
$K_a(z) = 0$.
Then there exists a continuous path $\gamma : [a, b] \rightarrow \widetilde{\mathbb{D}}_R$ such that 
$\gamma(a) = z$ and for each $x \in [a, b]$ we have that 
$(x, \gamma(x))$ belongs to $\widehat{\Sigma}_{I, R}$.
\end{lemma}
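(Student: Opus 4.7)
The plan is to view the lemma as a path-lifting problem and apply Lemma \ref{lifting}. First I would establish a uniform bound $n$ on the number of zeros (counted with multiplicity) of $K_c$ inside $\mathbb{D}_R$ as $c$ ranges over $[a,b]$. Since every coefficient of $K_c$ has absolute value $1$, the estimate $|K_c(t)| \leq 1/(1-|t|)$ holds on any disk of radius less than $1$; combined with $K_c(0) = 1$, Jensen's formula on a slightly larger disk $\overline{\mathbb{D}_{R'}}$ with $R < R' < 1$ produces such a bound $n$, independent of $c \in [a,b]$.

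Next I would define a map $\Phi : [a,b] \to \widetilde{\mathbb{D}}_R^n/S_n$ by sending $c$ to the unordered $n$-tuple consisting of the zeros of $K_c$ in $\mathbb{D}_R$ counted with multiplicity, padded by copies of the point $\infty$ so that the total count equals $n$. The main step is to verify that $\Phi$ is continuous. At any $c_0$ for which the critical point is not periodic, $K_c$ depends continuously on $c$ in the topology of formal power series, hence in the compact-open topology on $\mathbb{D}$, and Rouché's theorem on a disk of radius slightly larger than $R$ delivers multiplicity-preserving continuity of zeros. The hard part will be the superattracting parameters: at such a $c_0$ of period $p$, the kneading determinant fails to extend continuously, and as recalled in the proposition preceding the lemma one has $\lim_{c \to c_0^\pm} K_c = P(t)/(1 \mp t^p)$ for the kneading polynomial $P$ of degree $p-1$. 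The point is that the denominators $1 \mp t^p$ have no zero in $\mathbb{D}$, so both one-sided limits are holomorphic on $\mathbb{D}_R$ with the same zero multiset there, which equals the zero multiset of $P$ in $\mathbb{D}_R$ and agrees with that of $K_{c_0}$ by definition. Applying Rouché from each side on $\mathbb{D}_{R'}$ then gives continuity of $\Phi$ at $c_0$.

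Finally I would invoke Lemma \ref{lifting} with $M = \widetilde{\mathbb{D}}_R$ to lift $\Phi$ to a continuous map $g = (g_1, \dots, g_n) : [a,b] \to \widetilde{\mathbb{D}}_R^n$. The hypothesis $K_a(z) = 0$ forces $z$ to appear among the components of $g(a)$; choosing an index $i$ with $g_i(a) = z$ and setting $\gamma := g_i$ yields a continuous path with $\gamma(a) = z$ such that for every $x \in [a,b]$ the value $\gamma(x)$ is either the point $\infty$ or a zero of $K_x$ in $\mathbb{D}_R$, so $(x, \gamma(x)) \in \widehat{\Sigma}_{I,R}$ in either case. The only real obstacle is the continuity of $\Phi$ across superattracting parameters, and the pole structure of the one-sided limits of $K_c$ being confined to the unit circle is exactly what makes it work on $\mathbb{D}_R$ with $R<1$.
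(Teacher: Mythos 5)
Your proof is correct and follows essentially the same route as the paper: bound the zero count via Jensen's formula, define the map to unordered $N$-tuples in $\widetilde{\mathbb{D}}_R^N/S_N$ padding with $\infty$, verify continuity (the paper delegates the superattracting case to the earlier proposition showing $\mathrm{Tr}$ is continuous, whereas you re-derive it in place), then lift via Lemma~\ref{lifting} and project to the coordinate matching $z$. The extra care you give to continuity across superattracting parameters is exactly the content the paper cites from its preceding proposition, so the arguments coincide.
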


\begin{proof} 
Since the coefficients of kneading determinants are universally bounded, then there exists 
(e.g. by Jensen's theorem, see \cite{OP}, Proposition 2.1)
a constant $N$, depending only on $R$, such that any kneading determinant $K_c(z)$ has at most $N$ roots, 
counted with multiplicities, inside the disk of radius $R$.
Thus we can 
define the map 
$$\Phi : I \to \underbrace{(\widetilde{\mathbb{D}}_R \times \dots \times \widetilde{\mathbb{D}}_R)}_{N \textup{ times}}/S_N$$
by taking $\Phi(c)$ to be the roots of $K_c(z)$ which lie inside $\mathbb{D}_R$, counted with multiplicities; if there are fewer 
than $N$ roots, then we take the remaining points to be $\infty$. The map $\Phi$ is continuous by Rouch\'e's theorem, hence 
 by Lemma \ref{lifting} there exists a continuous lift 
$$\Psi : I \to \widetilde{\mathbb{D}}_R \times \dots \times \widetilde{\mathbb{D}}_R.$$
Now, there exists an index $k$ between $1$ and $N$ such that $z$ 
is the $k^{th}$ coordinate of $\Psi(a)$; then the projection of 
$\Psi$ to the $k^{th}$ coordinate is the desired path $\gamma : I \to \widetilde{\mathbb{D}}_R$.
\end{proof}

Now, let us note that as an application of the previous Lemma, we can directly prove the 
\begin{proposition}
The set $\Sigma_{kn} \cap \mathbb{E}$ is path-connected. 
\end{proposition}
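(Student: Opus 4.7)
The plan is to reduce path-connectivity of $\Sigma_{kn}\cap \mathbb{E}$ to path-connectivity of the open annulus $A := \{ w \in \mathbb{C} : 1 < |w| < R_* \}$, where $R_* > 1$ is the constant furnished by Proposition \ref{annulus}. Since $A \subseteq \Sigma \subseteq \Sigma_{kn}$ and $A$ is an open annulus in $\mathbb{C}$, hence path-connected, it will suffice to show that every $s \in \Sigma_{kn}\cap\mathbb{E}$ can be joined to some point of $A$ by a path inside $\Sigma_{kn}\cap\mathbb{E}$.

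Given such an $s$, I would first use the identity $\Sigma_{kn} \cap \mathbb{E} = \{ s \in \mathbb{E} : K_c(1/s) = 0 \textup{ for some } c \in [-2, 1/4] \}$ recorded in section \ref{section:kns} to fix some $c_0$ with $K_{c_0}(1/s) = 0$. If $|s| < R_*$ then $s \in A$ and there is nothing to prove; otherwise $|1/s| \leq 1/R_* < 1$, and I would choose $R$ with $\max(|1/s|, 1/R_*) < R < 1$, so that the circle $\{ |w| = 1/R \}$ lies inside $A$. Applying Lemma \ref{liftknead} with $a = c_0$, $b = 1/4$, and initial point $1/s \in \mathbb{D}_R$ produces a continuous lift $\gamma : [c_0, 1/4] \to \widetilde{\mathbb{D}}_R$ with $\gamma(c_0) = 1/s$. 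Since $K_{1/4}(t) = 1/(1-t)$ has no zeros in $\mathbb{D}$, the fiber of $\widehat{\Sigma}_{I,R}$ over $c = 1/4$ consists of the single point $\infty$, so $\gamma(1/4) = \infty$. Setting $c^* := \inf\{ c \in [c_0, 1/4] : \gamma(c) = \infty \}$, closedness of $\gamma^{-1}(\{\infty\})$ gives $\gamma(c^*) = \infty$, while $\gamma(c_0) \in \mathbb{D}_R$ forces $c^* > c_0$.

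The inverted path $\delta(c) := 1/\gamma(c)$ on $[c_0, c^*)$ is then continuous into $\Sigma_{kn}\cap\mathbb{E}$, with $\delta(c_0) = s$ and $|\delta(c)| > 1/R$. Because $\gamma(c) \to \infty$ in $\widetilde{\mathbb{D}}_R$ as $c \to c^{*-}$ means precisely $|\gamma(c)| \to R$, we obtain $|\delta(c)| \to 1/R$, and the accumulation set of $\delta$ at $c^*$ is a compact (in fact connected) subset of the circle $\{ |w| = 1/R \} \subset A$. Openness of $A$ in $\mathbb{C}$ then provides a neighborhood of this compact set inside $A$, so for $c$ sufficiently close to $c^*$ the point $\delta(c)$ already lies in $A$; restricting $\delta$ to such a subinterval yields the required path from $s$ to $A$ inside $\Sigma_{kn}\cap\mathbb{E}$. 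The delicate point in this argument is that $\delta$ need not extend continuously to any specific endpoint on $\{ |w| = 1/R \}$, since the lift may wander along the boundary of $\mathbb{D}_R$; but we never need a prescribed limit, only to enter the open set $A$, which is automatic once the accumulation set is known to lie inside it.
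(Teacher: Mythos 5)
Your proof is correct and follows essentially the same route as the paper: invoke Proposition \ref{annulus} for the annulus, lift via Lemma \ref{liftknead} to a parameter whose kneading determinant has no zeros in the disk, and project down. The choice of $c=1/4$ as the target fiber rather than the paper's $c=0$ is immaterial, since $K_{1/4}(t)=K_0(t)=1/(1-t)$. One small place where you are more careful than the paper's own write-up of this proposition: you insist that the disk radius $R$ satisfy $1/R_* < R < 1$ strictly, so that the circle $\{|w|=1/R\}$ lies in the interior of the open annulus $A$ and the inverted lift is guaranteed to enter $A$ before escaping to $\infty$; the paper here takes the disk radius to be exactly $R_*^{-1}$, which places the target circle on the boundary of the annulus and glosses over this point (though the same care you take is applied explicitly a bit later, in the proof of Lemma \ref{finitecomp}, where $R$ is chosen strictly inside $(R_0,1)$).
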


\begin{proof}
First, we know by Proposition \ref{annulus} that $\Sigma_{kn}$ 
contains an annulus of type $\{ 1 < |z| < R \}$ for some $R>1$: thus, if we pick 
$s \in \Sigma_{kn} \cap \mathbb{E}$ inside the annulus, then $s$ can 
be connected to the unit circle by a continuous path inside the annulus, which is contained in $\Sigma_{kn}$.
Otherwise, let us suppose $|s| > R$: then $z := s^{-1}$ belongs to the set 
$\widetilde{\Sigma} := \widehat{\Sigma}_{[-2, 1/4], R^{-1}}$.
By the previous Lemma, each element $(c, z) \in [-2, 1/4] \times \mathbb{D}_{R^{-1}}$
such that $K_c(z) = 0$ can be connected via a path inside $\widetilde{\Sigma}$ to an element $(0, w)$ of the 
fiber over $0$. Now, since $K_0(t)$ has no zeros inside the unit disk, then the fiber over $0$ contains only 
the point at infinity, thus $z$ is connected by a continuous path inside $\Sigma_{kn}^{-1}$ to a point on the boundary of the 
unit disk. 
\end{proof}

\subsection{Irreducibility and renormalization} \label{section:irredrenorm}

Now, in order to prove the path-connectivity of $\Sigma$ rather than of $\Sigma_{kn}$, 
we need to check what are the minimal polynomials of growth rates and whether they coincide
with the kneading polynomials.
It turns out that the set of all Galois conjugates is essentially the same as the 
set of zeros of kneading determinants of non-renormalizable parameters (see Proposition \ref{alguntuned}).

Recall that each superattracting map $f_c$ is the center of a 
small copy of the Mandelbrot set, which is the image of the Mandelbrot
set via a \emph{tuning} homeomorphism, as constructed by Douady and Hubbard. 
Let us denote by $\mathcal{M}_c$ the small Mandelbrot set with center $c$, and 
$U_c$ the interior of the real section of $\mathcal{M}_c$, i.e. the open real interval whose 
closure is $\mathcal{M}_c \cap \mathbb{R}$. Note that small Mandelbrot sets are either 
disjoint or nested, and in fact $c_1 \in \mathcal{M}_{c_2}$ implies 
$\mathcal{M}_{c_1} \subseteq \mathcal{M}_{c_2}$.

Recall moreover that $f_{-1}$ is the unique superattracting map
of period $2$, known as the \emph{basilica}. 
Let us denote by $\tau$ the operator given by tuning with the basilica, i.e. such that 
$$f_{\tau(c)} := f_{-1} \star f_c.$$
The operator $\tau$ will be also called \emph{period doubling} tuning operator
and will play a special role in the following; note the fixed point of $\tau$ is the Feigenbaum parameter
$c_{Feig}$.
Let us moreover define the set $N$ of \emph{non-renormalizable} parameters as 
$$N := [-2, 1/4] \setminus \bigcup_{c \in M_0 \cap \mathbb{R}} U_c,$$
that is the parameters which are not contained in the interior of any small Mandelbrot set; 
finally, let $N^\star$ be the set of successive period doublings of non-renormalizable 
parameters, i.e. 
$$N^\star := \bigcup_{n \geq 0} \tau^n(N).$$
As Douady pointed out, entropy behaves nicely with respect to renormalization; more precisely, 
as soon as the root of a small Mandelbrot set has positive entropy, then all maps in the same small Mandelbrot set 
have the same entropy:

\begin{proposition}[\cite{Do}] \label{sameentropy}
Let $c \in [-2, 1/4]$ belong to the small Mandelbrot set $\mathcal{M}_{c_0}$, and suppose that $s(f_{c_0}) > 1$. Then we have 
$$s(f_c) = s(f_{c_0}).$$
\end{proposition}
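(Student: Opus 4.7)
The plan is to exploit the renormalization identity \eqref{eq:renorm}, $P_c(t) = P_{c_1}(t^p) P_{c_0}(t)$ for $c = c_0 \star c_1$, and to pin down the smallest positive real zero of $K_c$, which by Theorem \ref{MTknead} is $1/s(f_c)$.

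I would first reduce to the superattracting case: the superattracting parameters are dense in $\mathcal{M}_{c_0} \cap \mathbb{R}$ and $c \mapsto s(f_c)$ is continuous on $[-2, 1/4]$, so it is enough to treat $c = c_0 \star c_1$ with $c_1 \in [-2, 1/4]$ superattracting of some period $q$. The renormalization identity then exhibits the positive real zeros of $P_c$ in $(0,1)$ as the union of two families: the positive real zeros of $P_{c_0}$, whose minimum is $1/s(f_{c_0})$ (which lies in $(0,1)$ because $s(f_{c_0}) > 1$), and the $p$-th roots of the positive real zeros of $P_{c_1}$ in $(0,1)$, whose minimum is $(1/s(f_{c_1}))^{1/p}$ (this second family is empty when $s(f_{c_1}) = 1$). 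Taking reciprocals yields
\[
s(f_c) \;=\; \max\!\left(s(f_{c_0}),\; s(f_{c_1})^{1/p}\right).
\]
Since Lemma \ref{basicincl} gives $s(f_{c_1}) \leq 2$, the desired identity $s(f_c) = s(f_{c_0})$ reduces to the single inequality $s(f_{c_0})^p \geq 2$.

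The main obstacle is to establish this bound, i.e.\ that every superattracting real quadratic polynomial of period $p$ with positive topological entropy satisfies $s^p \geq 2$. My approach would be to show directly that $P_{c_0}(2^{-1/p}) \leq 0$: combined with $P_{c_0}(0) = 1$ this produces a zero of $P_{c_0}$ in $(0, 2^{-1/p}]$, and hence $1/s(f_{c_0}) \leq 2^{-1/p}$. The sign of $P_{c_0}(2^{-1/p}) = \sum_{k=0}^{p-1} \epsilon_k 2^{-k/p}$ can be controlled by means of the Milnor--Thurston admissibility criterion (Theorem \ref{admthm}): the hypothesis $s > 1$ excludes the kneading sequences obtained by period doubling a lower-period map of zero entropy, and the remaining admissible period-$p$ patterns are forced to carry enough negative $\epsilon_k$'s to dominate the positive terms of the geometric series. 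A more dynamical alternative is to exhibit a topological horseshoe on two symbols inside $f_{c_0}^p$ when $h(f_{c_0}) > 0$, which gives $h(f_{c_0}^p) \geq \log 2$ and hence $s(f_{c_0})^p \geq 2$ at once.
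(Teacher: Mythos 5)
The paper does not prove this proposition; it is cited directly from Douady \cite{Do}, so there is no internal proof to measure your attempt against. Your reduction step is correct and clean: the factorization $P_{c_0 \star c_1}(t) = P_{c_1}(t^p)P_{c_0}(t)$ combined with Theorem \ref{MTknead} does give $s(f_{c_0\star c_1}) = \max\bigl(s(f_{c_0}),\, s(f_{c_1})^{1/p}\bigr)$, and density plus continuity of entropy reduce the statement to the superattracting case. You then correctly isolate what remains to be shown, namely that every superattracting real parameter $c_0$ of period $p$ with $s(f_{c_0})>1$ satisfies $s(f_{c_0})^p \geq 2$.

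The problem is that this inequality is where your argument stops being a proof, and it is not a peripheral technical lemma: it is \emph{equivalent} to the special case $c = c_0 \star (-2)$ of the proposition (the Chebyshev tip, where $s(f_{c_1})^{1/p} = 2^{1/p}$ exactly), so proving it carries essentially the full content of the claim. Neither of your two sketches closes that gap. The sign analysis of $P_{c_0}(2^{-1/p})$ is a genuinely delicate statement about admissible $\pm1$-sequences; the phrase ``enough negative $\epsilon_k$'s to dominate'' is not an argument, and the dangerous regime --- a high-period superattracting parameter just past the Feigenbaum point, where $s$ is barely above $1$ and the kneading coefficients are close to the alternating zero-entropy pattern --- is exactly where $P_{c_0}(2^{-1/p})$ hovers near $0$ and a one-line estimate will not do. (Also note the inequality $P_{c_0}(2^{-1/p}) \leq 0$ is merely sufficient, not equivalent, to $1/s(f_{c_0}) \leq 2^{-1/p}$, since $P_{c_0}$ could cross zero and come back.) The horseshoe alternative is closer in spirit to the actual proof, but ``exhibit a topological horseshoe on two symbols inside $f_{c_0}^p$'' \emph{is} the missing theorem: Misiurewicz-type results guarantee a horseshoe for some iterate $f^N$, with no control of $N$ relative to the period $p$, and tying $N \leq p$ to the combinatorics of the superattracting orbit requires a concrete argument on the Hubbard tree which you do not supply. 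As written, the proposal reformulates the proposition rather than proves it.
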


Note that the only real hyperbolic components with zero entropy are the ones which arise from 
the main cardioid after finitely many period doubling bifurcations, which explains why we need to 
consider the set $N^\star$. 
We claim that $\Sigma$ can be given the following characterization in terms of kneading determinants. 

\begin{proposition} \label{alguntuned}
The set $\Sigma \cap \mathbb{E}$ can be characterized in the following way: 
$$\Sigma \cap \mathbb{E} = \{ z \in \mathbb{E} \ : \ K_c(1/z) = 0 \ \textup{for some } c \in N^\star\}.$$
\end{proposition}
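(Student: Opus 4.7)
The plan is to prove the two inclusions separately, combining the renormalization structure of real quadratic maps with the irreducibility techniques of Section 4.

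For the inclusion $\subseteq$, by continuity of zeros (Rouché, as in Proposition \ref{outside}) it suffices to handle the case where $z \in \mathbb{E}$ is a Galois conjugate of $s(f_c)$ for some superattracting $c$ with $s(f_c) > 1$. Locate the outermost small Mandelbrot copy $\mathcal{M}_{c_0}$ containing $c$ with $s(f_{c_0}) > 1$: Proposition \ref{sameentropy} yields $s(f_c) = s(f_{c_0})$, and by the tuning formula \eqref{eq:renorm} the minimal polynomial of $s(f_c)$ divides $P_{c_0}(t)$, so $1/z \in \mathbb{D}$ is a root of $P_{c_0}$. Set $c_0^- := c_0 \star (-2)$; since $K_{-2}(t) = (1-2t)/(1-t)$ is rational, the natural extension of \eqref{eq:renorm} to kneading power series gives $K_{c_0^-}(t) = K_{-2}(t^p) P_{c_0}(t)$ with $p$ the period of $c_0$, so $K_{c_0^-}(1/z) = 0$. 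The identity $(\tau^m(c))^- = \tau^m(c^-)$, together with the fact that the tip of a primitive copy lies in $N$, places $c_0^- \in N^\star$ in all cases.

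For the inclusion $\supseteq$, given $c \in N^\star$ and $z \in \mathbb{E}$ with $K_c(1/z) = 0$, I would construct admissible irreducible polynomials approximating the zeros of $K_c$, following the template of Proposition \ref{equalpmu}. Writing $K_c(t) = 1 + \sum_k \epsilon_k t^k$, form for each $n$ the polynomial
\[P_n(t) := 1 - \sum_{k = 1}^{N} t^k + \eta\, t^{N+1} + \sum_{k = 0}^{n} \epsilon_{n-k}\, t^{N+2+k},\]
with $N$ large enough to guarantee admissibility via Corollary \ref{admissible}, $N + n + 3$ a power of two, and $\eta \in \{\pm 1\}$ chosen so that the coefficient sum of $P_n$ is $\equiv 2 \pmod 4$. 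Lemma \ref{irred} then gives irreducibility of $P_n$, which is thereby the kneading polynomial of some superattracting $c_n \in M_0 \cap \mathbb{R}$. The initial coefficients of the reciprocal polynomial $Q_n(t) := t^{N+n+2} P_n(t^{-1})$ match those of $K_c$, so by Rouché the zeros of $Q_n$ in $\mathbb{D}$ approximate the zeros of $K_c$ in $\mathbb{D}$. Picking $\alpha_n \to 1/z$ among these zeros, irreducibility makes $1/\alpha_n$ a Galois conjugate of $s(f_{c_n})$ lying in $\mathbb{E}$, and $1/\alpha_n \to z$ yields $z \in \Sigma \cap \mathbb{E}$.

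The principal obstacle is coordinating the three combinatorial constraints of the $\supseteq$ construction simultaneously: admissibility (Corollary \ref{admissible}), degree of the form $2^m - 1$ (for Lemma \ref{irred}), and coefficient-sum parity $\equiv 2 \pmod 4$. This is precisely the technical mechanism already used in Bousch's proof of Proposition \ref{equalpmu}, and it transfers to the present setting because both the admissible prefix and the parity-adjusting digit can be chosen after the prescribed kneading-sequence prefix coming from $c$, leaving the renormalization structure of $c \in N^\star$ untouched.
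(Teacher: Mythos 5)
Your $\supseteq$ direction has a genuine gap, and it is precisely the point where the paper needs Lemma \ref{densityprimes} (dominant strings).

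In the construction borrowed from Proposition \ref{equalpmu}, the polynomial $P_n$ is the admissible (kneading) polynomial, and $Q_n(t) = t^{N+n+2}P_n(1/t)$ is its reciprocal. The Milnor--Thurston theorem gives $P_n(1/s(f_{c_n})) = 0$, so $s(f_{c_n})$ is a root of $Q_n$; therefore $Q_n$, not $P_n$, is the minimal polynomial of $s(f_{c_n})$ once irreducibility is established, and $Gal(s(f_{c_n}))$ is the root set of $Q_n$. Your $\alpha_n$ are chosen as roots of $Q_n$ inside $\mathbb{D}$ approximating $1/z$; so it is $\alpha_n$, not $1/\alpha_n$, that belongs to $Gal(s(f_{c_n})) \subseteq \Sigma$. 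The conclusion you can draw is $1/z \in \Sigma \cap \overline{\mathbb{D}}$, which is not what is wanted: $\Sigma$ is not invariant under $z \mapsto 1/z$ (the reciprocal of an irreducible $Q_n$ is $P_n$, and roots of $P_n$ are Galois conjugates of $1/s(f_{c_n})$, not of $s(f_{c_n})$). This is not a fixable typo. The Bousch-style trick of Proposition \ref{equalpmu} pads an arbitrary sequence with a long initial block of $-1$'s to force admissibility, which makes the \emph{suffix} of $P_n$ free; after reciprocation this is a free \emph{prefix} of $Q_n$. That is exactly what you want inside $\mathbb{D}$, but outside the disk you need kneading polynomials whose own prefix approximates $K_c$, and you cannot freely append a tail and stay admissible. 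The paper makes this asymmetry explicit at the end of Section \ref{section:pmu}. This is why the actual proof invokes Lemma \ref{densityprimes}: for non-renormalizable $c$ there is a sequence of superattracting parameters $c_n \to c$ with $P_{c_n}$ irreducible, so $P_{c_n} \to K_c$ coefficientwise, and then the reciprocal roots $1/\alpha_n \in Gal(s(f_{c_n}))$ converge to $z$. The case $c = \tau^k(c_\star)$ with $c_\star \in N$ is then reduced to the non-renormalizable case using the tuning formula together with Lemma \ref{irredhigh}. Your proposal handles neither of these.

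For the $\subseteq$ direction your route is a genuine variant of the paper's: you pass to the tip $c_0^- := c_0 \star (-2)$ of the maximal small copy, use the extension of \eqref{eq:renorm} to kneading determinants to get $K_{c_0^-}(t) = K_{-2}(t^p)P_{c_0}(t)$, and observe that the tip of a maximal copy lies in $N$. The paper instead uses the root (and then $\tau^n$ of it) and appeals to Proposition \ref{sameentropy}, which you also use; both arrive at a parameter in $N^\star$ whose kneading determinant vanishes at $1/z$. Your version is fine modulo some care with the tuning formula for the non-superattracting factor $-2$, and modulo being explicit that the maximal copy with $s(f_{c_0})>1$ must be chosen \emph{after} first stripping off the period-doubling part $\tau^n$ (otherwise the maximal copy is the $2^n$-copy with entropy zero, and the entropy hypothesis of Proposition \ref{sameentropy} fails). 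But the $\supseteq$ gap is the serious one and would need Lemma \ref{densityprimes} or an equivalent to close.
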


The proof will use the following lemma, whose proof we postpone to section \ref{section:density}.

\begin{lemma} \label{densityprimes}
Let $c \in N$ be a non-renormalizable parameter, with $c < -1$. 
Then there exists a sequence $c_n \to c$ of superattracting parameters whose kneading polynomials 
$P_{c_n}(t)$ are irreducible.
\end{lemma}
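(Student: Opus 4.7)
The plan is to build a sequence of superattracting parameters $c_n \to c$ whose kneading polynomials $P_{c_n}(t)$ have degree of the form $2^{k_n}-1$ and whose coefficient sums lie in the residue class $2 \pmod 4$; then Lemma \ref{irred} immediately gives irreducibility. By the monotonicity-of-entropy theorem, the kneading sequence of any parameter close to $c$ agrees with the kneading sequence $\eta=(1,\eta_1,\eta_2,\dots)$ of $f_c$ on an initial segment whose length tends to infinity as one approaches $c$; so it suffices, for each large $L$, to produce an admissible periodic $\pm 1$ sequence of period $2^{k_L}$ matching $\eta$ on the first $L$ entries and having coefficient sum congruent to $2 \pmod 4$.

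The heart of the argument is the existence of such admissible periodic sequences, and this is where the non-renormalizability hypothesis enters. If $c$ lay in a nontrivial tuned small Mandelbrot set $\mathcal{M}_{c_0}$, then by formula (\ref{eq:renorm}) every superattracting kneading polynomial with prefix matching $\eta$ would factor through $P_{c_0}(t)$, blocking irreducibility; but for $c \in N$ this obstruction is absent and the combinatorial space of admissible extensions of prefixes of $\eta$ is rich enough to contain, for infinitely many $k$, periodic sequences of period exactly $2^k$. Admissibility must be checked against all cyclic shift inequalities of Theorem \ref{admthm}; in favorable situations Corollary \ref{admissible} reduces this to comparing the initial runlength of the candidate sequence to the maximal runlength arising after the fixed prefix. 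Having constructed such a sequence, I would flip at most one symbol in a position past the $L$-th coordinate to shift the coefficient sum by $\pm 2$ into the correct residue class; by the runlength bound in the footnote to Lemma \ref{boundarycrit}, this flip can be arranged without destroying admissibility. The resulting polynomial is $P_{c_L}(t)$ for a unique superattracting $c_L$ of period $2^{k_L}$, irreducibility follows from Lemma \ref{irred}, and by the entropy monotonicity $c_L \to c$.

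The main obstacle I anticipate is guaranteeing admissible periodic extensions of period exactly a power of two. Two complementary approaches seem available. One is an explicit combinatorial construction: locate a long run inside $\eta$ and use it as the initial runlength of a periodic sequence padded out to length $2^k$, then apply Corollary \ref{admissible} to certify admissibility. The other is a density argument: the period-$2^k$ superattracting centers are the roots of $f_c^{2^k}(0)=0$, a polynomial equation of degree doubly exponential in $k$, and their distribution on the real axis should approach the bifurcation measure, whose support contains all of $N$; a pigeonhole over the two possible parities of the coefficient sum would then place a period-$2^k$ center of the required parity in any prescribed neighborhood of $c$. Either route, once made precise, reduces the lemma to a direct application of Lemma \ref{irred}.
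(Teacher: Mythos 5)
Your reduction to Lemma \ref{irred} is correct, and you have correctly identified the heart of the matter: producing, arbitrarily close to $c$, an admissible periodic $\pm 1$ sequence of period exactly $2^k$ with coefficient sum $\equiv 2 \pmod 4$. But neither of your two suggested routes actually closes that gap, and the obstacle is not a technicality.

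Your first route (prefix $\eta$ and pad out to period $2^k$, certifying admissibility via Corollary \ref{admissible}) cannot work as stated, because Corollary \ref{admissible} needs the \emph{initial} runlength $N$ of the candidate sequence to strictly exceed every later runlength $M$, and that initial runlength is dictated by the fixed prefix of $\eta$ — which for a generic non-renormalizable $c$ is as small as $1$ (e.g.\ for $c=-2$ the kneading sequence starts $1,-1,1,1,\dots$, so $N=1$ and Corollary \ref{admissible} is vacuous). In the proof of Lemma \ref{boundarycrit} this issue is dodged by deliberately planting a long initial run $1-z-\cdots-z^{2m-1}$ and controlling the tail, but here you have no freedom in the prefix: it must agree with $\eta$ on an arbitrarily long initial segment. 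Your footnote-based ``flip one symbol'' fix has the same scope problem — it keeps runlengths bounded, but boundedness of runlengths is not the admissibility criterion outside the $N>M$ regime. Your second route (bifurcation-measure density plus a parity pigeonhole) is not actually an argument: even granted equidistribution of period-$2^k$ centers, nothing controls the residue of the coefficient sum modulo $4$, and the flip you would use to correct it lands you back in the same admissibility problem.

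The paper escapes this by working with a finer combinatorial admissibility criterion than Corollary \ref{admissible}: the notion of \emph{dominant strings}, in which the full shift comparison of Theorem \ref{admthm} is repackaged as a condition ($S \ll Y$ for every suffix $Y$) stable under concatenation. The crucial input you do not have is an external density result, \cite{Ti} Lemma 11.5, stating that every non-renormalizable parameter $c<-1$ is a limit of \emph{dominant} parameters. Once $c$ is replaced by a dominant parameter with string $S$, the paper builds explicit admissible periodic extensions $S^nZ_{a,b}$ (with $Z_{a,b}=2\,1^a\,2\,1^b$, $a<b$ odd) whose dominance is a stable, checkable property; the period is then tuned to a power of $2$ by choosing $b=b_n$, and the index congruence $\equiv 2 \pmod 4$ is arranged algebraically by taking $n\equiv 0\pmod 4$, so that $\llbracket S^nZ_{a,b}\rrbracket = n\llbracket S\rrbracket + 2$. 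This is the step your proposal lacks a substitute for; without it, the ``admissible extension of period $2^k$'' you need simply isn't produced.
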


\begin{proof}[Proof of Proposition \ref{alguntuned}.]
Let $z \in Gal(s(f_c))$, with $|z| > 1$ and $f_c$ superattracting. Then 
we can write $c = \tau^n(c_\star)$ with $n \geq 0$ and $c_\star \in [-2, \tau(-2))$. 
Moreover, let $c_0$ be the root of the maximal small Mandelbrot set which contains
$c_\star$; note that $c_0$ belongs to $N$, and $c_1 := \tau^n(c_0)$ belongs to $N^\star$. 
Moreover, note that $c_1 < c_{Feig}$ so $s(f_{c_1}) > 1$ and 
by Proposition \ref{sameentropy} we have that 
$$s(f_c) = s(f_{c_1}).$$ 
Thus, the growth rate $s(f_c)$ is a root of the reciprocal of the kneading polynomial $P_{c_1}(t)$, 
which has integer coordinates, so its Galois conjugate $z$ is also root of the same polynomial
and $K_{c_1}(1/z) = 0$.

Conversely, let $z \in \mathbb{E}$ such that $K_c(1/z) = 0$ for some $c \in N^\star$. 
Suppose first that $c$ in non-renormalizable: then, by Lemma \ref{densityprimes} there exists a sequence 
$c_n \to c$ of superattracting parameters such that the period $p_n$ of $f_{c_n}$ is a power of $2$ 
and the polynomials $P_{c_n}(t)$ are irreducible. 
Since $K_{c_n}(z) \to K_c(z)$ inside the unit disk, by Rouch\'e's theorem 
there exists a sequence $z_n \to z$ such that for each $n$ we have $K_{c_n}(z_n^{-1}) = 0$, 
so each $z_n$ is a root of the reciprocal of the polynomial $P_{c_n}(t)$,
 which is also irreducible, 
so $z_n$ belongs to $Gal(s(f_{c_n})) \subseteq \Sigma$, and the claim follows by taking the limit. 

If instead $c$ is of the form $c = \tau^k(c_\star)$, with $c_\star \in N$, then by applying the 
same reasoning for $c_\star$ we can find a sequence $c_n \to c_\star$ of superattracting maps
with irreducible kneading polynomials $P_{c_n}(t)$; then we have also $\widetilde{c}_n := \tau^k(c_n) \to c$ and that
there exists a sequence $z_n \to z$ such that $K_{\widetilde{c}_n}(z_n^{-1}) = 0$. 
Now, for each $n$ we have that 
$$P_{\widetilde{c}_n}(t) = (1-t)(1-t^2)\dots(1-t^{2^{k-1}})P_{c_n}(t^{2^k})$$
so the growth rate $s_n := s(f_{\widetilde{c}_n})$ is a zero of the reciprocal of the kneading 
polynomial $P_{c_n}(t^{2^k})$, which is irreducible by Lemma \ref{irredhigh}, so $z_n$ 
belongs to $Gal(s_n) \subseteq \Sigma$, and the claim follows. 
\end{proof}

Our goal to show path-connectivity is to apply the path-lifting lemma by using $N^\star$
as the base space; however, $N^\star$ is totally disconnected, so we cannot apply the argument directly 
as in section \ref{section:lifting}. 

We shall now see that we can reduce ourselves to taking as our base space a set of parameters with 
finitely many connected components, and then we shall apply the argument to each component. 
For an integer $p$, define $N_p$ to be the complement of the interiors of the small 
Mandelbrot sets of period less than $p$:
$$N_p := [-2, 1/4] \setminus \bigcup_{\stackrel{c\in M_0 \cap \mathbb{R}}{\textup{Per}(f_c) < p}} U_c.$$
Since there are only finitely many hyperbolic components of a 
given period, the set $N_p$ is a finite union of closed intervals. 
Moreover, given $n > 0$ let us define $N_{p, n}$ to be the union
$$N_{p, n} := \bigcup_{k = 0}^n \tau^k(N_p).$$
The set $N_{p, n}$ is a finite union of closed intervals.
Given $I \subseteq [-2, 1/4]$ a closed interval, we define the set
$$\Sigma_{I, R} :=  \{ z \in \mathbb{D}_R \ : \ K_c(z) = 0 \textup{ for some }c \in I\}.$$

\begin{lemma} \label{finitecomp}
There exist real constants $0 < R_0 < R < 1$ and positive integers $p, n$ such that 
we have the equality
$$\Sigma^{-1} \cap \mathbb{D} = \{ z \ : \  R_0 < |z| < 1 \} \cup \bigcup_{k = 1}^r \Sigma_{I_k, R}$$
where $I_1, \dots, I_r$ are the connected components of $N_{p, n}$.
\end{lemma}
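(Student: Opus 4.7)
The plan is to combine Proposition \ref{annulus}, which provides an annular neighbourhood of $S^1$ inside $\Sigma^{-1}$, with the characterization of $\Sigma \cap \mathbb{E}$ in terms of $N^\star$ from Proposition \ref{alguntuned}, so as to replace the totally disconnected parameter set $N^\star$ by the finite union of closed intervals $N_{p,n}$ at the cost of an annular term. By Proposition \ref{annulus} we may pick $R_0 \in (0,1)$ so that $\{R_0 < |z| < 1\} \subseteq \Sigma^{-1}$, and then choose any $R \in (R_0, 1)$; the integers $p$ and $n$ are fixed below.

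For the inclusion $\subseteq$, take $z \in \Sigma^{-1} \cap \mathbb{D}$. If $|z| > R_0$ it lies in the annulus; otherwise, Proposition \ref{alguntuned} gives some $c = \tau^k(c_\star) \in N^\star$ with $c_\star \in N$ such that $K_c(z) = 0$. Iterating the tuning formula \eqref{eq:renorm} with the basilica (for which $P_{-1}(t) = 1 - t$) yields
$$P_c(t) = P_{c_\star}(t^{2^k}) \prod_{i=0}^{k-1}(1 - t^{2^i}).$$
The product factor contributes only roots of unity on $S^1$, while a zero of the first factor satisfies $P_{c_\star}(z^{2^k}) = 0$; by the proof of Lemma \ref{basicincl}, every root of $P_{c_\star}$ has modulus at least $1/2$, so $|z| \geq 2^{-1/2^k}$. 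The bound $|z| \leq R_0 < 1$ then forces $k \leq n$ for some integer $n = n(R_0)$. Since $N \subseteq N_p$ for every $p \geq 1$, we get $c \in \tau^k(N) \subseteq \tau^k(N_p) \subseteq N_{p,n}$, so $c$ belongs to some connected component $I_j$ of $N_{p,n}$ and $z \in \Sigma_{I_j, R}$.

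For the reverse inclusion $\supseteq$, the annulus part is exactly Proposition \ref{annulus}. Given $z \in \Sigma_{I_j, R}$ arising from $K_c(z) = 0$ with $c \in I_j \subseteq N_{p,n}$, pick a sequence $c_m \in N^\star \cap I_j$ with $c_m \to c$. Continuity of the map $c \mapsto K_c$ in $\mathcal{O}(\mathbb{D})$ (established in Section \ref{section:kns}), together with Rouch\'e's theorem, produces a sequence $z_m \to z$ with $K_{c_m}(z_m) = 0$. By Proposition \ref{alguntuned} each $z_m$ lies in $\Sigma^{-1}$, and closedness of $\Sigma$ (hence of $\Sigma^{-1}$ in $\mathbb{C}^*$) yields $z \in \Sigma^{-1} \cap \mathbb{D}$.

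The main obstacle is the density step in the reverse inclusion: one must ensure that $N^\star$ is dense in each connected component $I_j$ of $N_{p,n}$, and this is what forces a careful choice of $p$. The underlying fact is the classical density of Misiurewicz (and more generally non-renormalizable) real parameters in any real interval inside the Mandelbrot set, so that $N \subseteq N^\star$ is dense in each $I_j$ provided $p$ is large enough that no component $I_j$ is ``trapped'' by a small Mandelbrot set of period $\geq p$ in a manner that destroys this density. The remaining bookkeeping is routine: select $R_0$ from Proposition \ref{annulus}, then $n$ from the tuning-depth bound, then $p$ from the density requirement, and finally $R \in (R_0, 1)$ to accommodate the overlap between the annulus and the $\Sigma_{I_j, R}$ pieces.
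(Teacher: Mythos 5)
Your forward inclusion is essentially the paper's: once $|z| \leq R_0$, the bound $|z^{2^k}| \geq 1/2$ (from Lemma \ref{basicincl} applied to $P_{c_\star}$) forces $k \leq n$ provided $n$ is chosen with $2^{-1/2^n} > R$, and $N \subseteq N_p$ holds trivially for every $p$.

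The reverse inclusion, however, has a genuine gap. You propose to approximate an arbitrary $c \in N_{p,n}$ with $K_c(z)=0$ by a sequence $c_m \in N^\star$, appealing to ``density of non-renormalizable parameters.'' This is false: by the (now proven) density of hyperbolicity for real quadratic maps, every subinterval of $[-2,-3/4]$ contains a hyperbolic window, and every hyperbolic window of period $>1$ is contained in the interior of some small Mandelbrot set; hence $N$, and a fortiori $N^\star$, is nowhere dense in the region that matters. Misiurewicz parameters are dense, but they can be (and usually are) renormalizable, so invoking them does not help; nor can any choice of $p$ repair this, since enlarging $p$ only enlarges $N_{p,n}$ and its components, so there are \emph{more} renormalizable parameters one would need to approximate, not fewer.

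The paper avoids the density issue entirely via a renormalization factorization, and this is precisely where the condition on $p$ enters: one chooses $p$ so that $2^{-1/p} > R$. Given $c = \tau^k(c_\star) \in N_{p,n}$ with $c_\star$ renormalizable, let $c_0$ be the root of the maximal small Mandelbrot set containing $c_\star$; by the definition of $N_p$ its period is $\geq p$, and $c_1 := \tau^k(c_0)$ lies in $N^\star$ with period $p_0 \geq p$. Writing $f_c = f_{c_1} \star f_{c_2}$ gives the factorization $K_c(t) = P_{c_1}(t)\,K_{c_2}(t^{p_0})$; by Lemma \ref{basicincl} the second factor has no zeros of modulus $< 2^{-1/p_0} \geq 2^{-1/p} > R$, so any zero $z \in \mathbb{D}_R$ of $K_c$ is already a zero of $P_{c_1}$, hence of $K_{c_1}$ with $c_1 \in N^\star$, and Proposition \ref{alguntuned} applies. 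Your proposal needs some such algebraic factorization argument; a limiting argument along $N^\star$ cannot replace it.
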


\begin{proof}
By Proposition \ref{annulus}, the set $\Sigma$ contains an annulus, so we can choose $R_0 < 1$ such that 
$\Sigma^{-1}$ contains the set $\{ z \ : \ R_0 < |z| < 1 \}$, and let us choose any 
$R \in (R_0, 1)$. Moreover, there exists a positive integer $p$ such that $2^{-1/p} > R$, and similarly 
there exists $n$ such that $2^{-1/2^n} > R$. We shall see that the claim holds with these choices.

Let now $z \in \Sigma^{-1} \cap \mathbb{D}$. By Proposition \ref{alguntuned} we have that 
there exists $c \in N^\star$ such that $K_c(z) = 0$. Now either $|z| > R_0$, or $|z| \leq R_0 < R$; 
in the latter case, we have $c = \tau^k(c_\star)$ with $k \geq 0$ and $c_\star \in N \subseteq N_p$.
Thus, $z$ is a root of $K_{c_\star}(t^{2^k})$ hence by Lemma \ref{basicincl} we have  $|z^{2^k}| \geq 1/2$
so by our choice of $n$ we must have $k \leq n$ and $c \in N_{p, n}$.

Conversely, if $z$ belongs to $\Sigma_{I_k, R}$ then there exists $c \in N_{p, n}$ such that $K_c(z) = 0$.
Thus we can write $c = \tau^k(c_\star)$, with $k \leq n$, so that $c_\star$ does not lie in the interior of a 
small Mandelbrot set of period less than $p$. If $c_\star$ is non-renormalizable, then $c$ belongs to $\tau^k(N) \subseteq N^\star$, 
hence $z^{-1}$ belongs to $\Sigma \cap \mathbb{E}$ by Proposition \ref{alguntuned}.
Otherwise, let $c_0$ be the root of the maximal small Mandelbrot set containing $c_\star$, 
and let $c_1 := \tau^k(c_0)$. Note that by construction 
the period of $c_0$ is at least $p$, so also the period $p_0$ of $c_1$ is at least $p$; 
moreover, $c_1$ belongs to $N^\star$.
Now we can write $f_{c} = f_{c_1} \star f_{c_2}$ for some $c_2$, hence the kneading determinant is 
$$K_{c}(t) = P_{c_1}(t) K_{c_2}(t^{p_0}).$$
Now, by Lemma \ref{basicincl} all zeros of $K_{c_2}(t^{p_0})$ lie outside the circle of radius 
$2^{-1/p_0} \geq 2^{-1/p} > R$, so $z$
must be a zero of $P_{c_1}(t)$ and hence of $K_{c_1}(t)$ with $c_1 \in N^\star$, thus it belongs to 
$\Sigma^{-1}$ by Proposition \ref{alguntuned}.
\end{proof}

In order to prove the path-connectivity of $\Sigma \cap \mathbb{E}$ we will need to apply 
the path-lifting lemma to each $\Sigma_{I_k, R}$. Let us see the proof in detail.



\begin{proof}[Proof of Theorem \ref{sigmapc}]
Recall that $\Sigma$ contains the annulus $A_{R_0} := \{ 1 < |z| < R_0 \}$ for some $R_0 >1$. 
We shall show that every point of $\Sigma \cap \mathbb{E}$ can be connected 
to the annulus via a continuous path contained in $\Sigma$. 
Let $z \in \Sigma$, which we can assume such that $|z| \geq R_0$. By Lemma \ref{finitecomp}, 
there exist $R < 1$ and integers $n, p$ such that $z^{-1}$ belongs to 
$$\bigcup_{k = 1}^r \Sigma_{I_k, R}$$
where $I_1, \dots, I_r$ are the connected components of $N_{p, n}$ (labelled so that 
$I_1 < I_2 < \dots < I_r$ in the ordering of the real line). We shall
denote by $[\alpha_k, \beta_k]$ the endpoints of $I_k$, and for each parameter $c$ 
we will denote as
$$F_c := \{ z \in \mathbb{D}_R \ : \ K_c(z) = 0 \} \cup \{ \infty \}$$ 
the fiber over $c$. 
Thus $z^{-1}$ belongs to some $\Sigma_{I_k, R}$, and there exists $c \in I_k$ such that 
$(c, z^{-1})$ belongs to $\widehat{\Sigma}_{I_k, R}$.
Using Lemma \ref{liftknead}, let us lift the interval $[\alpha_k, c]$ to a continuous 
path in $\widehat{\Sigma}_{I_k, R}$ joining $(c, z^{-1})$ to a point $(\alpha_k, z_1)$ on the fiber
over $\alpha_k$. If $k = 1$ we stop, otherwise we wish to ``continue'' the path 
to the interval $I_{k-1}$ to the left.
In order to do so, note that if $U_c := (c_1, c_0)$ is the real section of a 
small Mandelbrot set of period $p$, 
then the kneading determinants have the following form:
$$K_{c_0}(t) = \frac{P_{c_0}(t)}{1-t^p} \qquad K_{c_1}(t) = \frac{P_{c_0}(t)(1-2t^p)}{1-t^p}$$
so we have the inclusion between the fibers $F_{c_0} \subseteq F_{c_1}$. This is the key 
step to continue the path to the neighbouring component. 
Indeed, since the fiber $F_{\alpha_k}$ is a subset of the fiber $F_{\beta_{k-1}}$, 
we can lift the interval $[\alpha_{k-1}, \beta_{k-1}]$
to a continuous path in $\widehat{\Sigma}_{I_{k-1}, R}$ joining $(\beta_{k-1}, z_1)$ to some point $(\alpha_{k-1}, z_2)$
on the fiber over $\alpha_{k-1}$. 
By iterating this procedure, we find a sequence of $k$ continuous paths $\gamma_a : [0,1] \to \widehat{\Sigma}_{I_{k-a}, R}$
with $a = 0, \dots, k-1$, and points $z_1, \dots, z_k$ such that 
$$\begin{array}{ll}
\gamma_0(0) = (c, z^{-1}) & \\
\gamma_a(0) = (\beta_{k-a}, z_{a}) & \textup{ for }1 \leq a \leq k-1 \\ 
\gamma_a(1) = (\alpha_{k-a}, z_{a+1}) & \textup{ for }0 \leq a \leq k-1.  
\end{array}$$
Now, if we denote $\pi_2 : [-2, 1/4] \times \widetilde{\mathbb{D}}_R \to \widetilde{\mathbb{D}}_R$ the projection onto the 
second coordinate, 
then the projected path 
$\gamma := \pi_2(\gamma_0 \cup\gamma_2\cup \dots \cup \gamma_{k-1})$ is a continuous path inside $\Sigma^{-1} \cap \widetilde{\mathbb{D}}_R$
starting from $z^{-1}$: if $\gamma$ hits the boundary of $\mathbb{D}_R$, then by taking inverses we get that $z$ is connected 
via a path inside $\Sigma$ to 
the annulus $\{ z : 1< |z| < R_0 \}$. Otherwise, $z^{-1}$ is connected to the projection of the endpoint $\gamma_{k-1}(1)$, 
which by definition belongs to the fiber $F_{-2}$. However, we know by computation that  
$$K_{-2}(t) = \frac{1-2t}{1-t},$$
so the fiber $F_{-2}$ is the union of the boundary of $\mathbb{D}_R$ with the singleton $1/2$.
If $\pi_2(\gamma_{k-1}(1))$ does not lie on the boundary of $\mathbb{D}_R$, then $z^{-1}$ is connected via a continuous 
path inside $\Sigma^{-1}$
to $1/2$, hence after inversion $z$ is connected to $2$, and we know (Corollary \ref{contains12}) that 
set $\Sigma$ contains the real interval $[1,2]$, so $z$ can also be connected to the annulus $A_{R_0}$
by a continuous path inside $\Sigma$. 
\end{proof}

The last step to complete the proof of Theorem \ref{main} is the following.

\begin{proposition}
The set $\Sigma \cap \mathbb{E}$ is locally connected.
\end{proposition}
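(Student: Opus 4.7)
The plan is to run the proof of Proposition \ref{outside} with the parameter space $[-2,1/4]$ replaced by the compact, locally connected set $N_{p,n}$ from Lemma \ref{finitecomp}, and then to glue the resulting locally connected piece to the open annulus inside $\Sigma$ provided by Proposition \ref{annulus}. The reason for the detour is that by Proposition \ref{alguntuned} only kneading zeros over $c \in N^\star$ contribute to $\Sigma$, and $N^\star$ is totally disconnected; Lemma \ref{finitecomp} bypasses this by isolating a finite union of closed intervals whose zeros in $\mathbb{D}_R$ exhaust $\Sigma^{-1} \cap \mathbb{D}_R$.

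Concretely, I would first invoke Lemma \ref{finitecomp} to write
$$\Sigma^{-1} \cap \mathbb{D} \;=\; \{R_0 < |z| < 1\} \;\cup\; \bigcup_{k=1}^r \Sigma_{I_k, R},$$
with $0 < R_0 < R < 1$ and $I_1, \dots, I_r$ the connected components of $N_{p,n}$. Next, I would apply Lemma \ref{lclemma}(2) to the trace map $c \mapsto F_c$ on $\Lambda = N_{p,n}$, with $V = \widetilde{\mathbb{D}}_R$ and $U = \mathbb{D}_R$: continuity is the same statement proved in Section \ref{section:kns} (restricted to the smaller target), and discreteness of $F_c \cap \mathbb{D}_R$ holds since no $K_c(t)$ vanishes identically. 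This gives local connectedness of $\bigcup_k \Sigma_{I_k, R}$ in $\mathbb{D}_R$; the homeomorphism $z \mapsto 1/z$ transfers this to local connectedness of $Y := (\bigcup_k \Sigma_{I_k, R})^{-1} \subset \mathbb{E}$.

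Finally, I would show that $\Sigma \cap \mathbb{E} = \{1 < |z| < 1/R_0\} \cup Y$ is locally connected. At interior points of the open annulus, small Euclidean disks give a connected basis; at $p \in Y$ with $|p| > 1/R_0$ any sufficiently small ball avoids the closed annulus, so the claim reduces to local connectedness of $Y$. The main obstacle is a point $p$ with $|p| = 1/R_0$, which must belong to $Y$ (since $\Sigma$ is closed and contains the open annulus, hence its boundary circle) yet lies on the outer edge of the annulus. For such $p$ and any small $\delta > 0$, I would use local connectedness of $Y$ to pick an open set $W \subseteq B_\delta(p) \cap (\Sigma \cap \mathbb{E})$ with $W \cap Y$ a connected neighborhood of $p$ in $Y$, and form $W \cup L_\delta$, where $L_\delta := B_\delta(p) \cap \{1 < |z| < 1/R_0\}$ is the lune cut from the annulus. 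Both $W$ and $L_\delta$ are open in $\Sigma \cap \mathbb{E}$; $W \cap Y$ and $L_\delta$ are each connected for small $\delta$; and $p \in W \cap Y$ lies in the closure of $L_\delta$, forcing $W \cup L_\delta$ to be connected. Shrinking $\delta$ then yields a basis of connected open neighborhoods of $p$, completing the proof.
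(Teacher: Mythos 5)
Your proof is correct and follows essentially the same route as the paper: invoke Lemma \ref{finitecomp} to decompose $\Sigma^{-1}\cap\mathbb{D}$, apply Lemma \ref{lclemma}(2) to the trace map over $N_{p,n}$ to get local connectedness of $\bigcup_k \Sigma_{I_k,R}$, and then glue in the annulus. The only stylistic differences are that you apply Lemma \ref{lclemma} once to the compact, locally connected space $N_{p,n}$ instead of to each component $I_k$ and then taking a finite union of closed sets, and that you spell out the boundary-circle gluing argument (using the lune $L_\delta$) which the paper simply asserts.
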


\begin{proof}
Let $I_1, \dots I_r$ be the connected components of $N_{p, n}$ as in Lemma \ref{finitecomp}.
For each $k$, by applying Lemma \ref{lclemma} with $\Lambda = I_k$, $V = \widetilde{\mathbb{D}}_R$ and $U = \mathbb{D}_R$
we get that $\Sigma_{I_k, R}$ is locally connected. As a consequence, since every set $\Sigma_{I_k, R}$ is 
closed in $\mathbb{D}_R$, then the finite union $\bigcup_{k = 1}^r \Sigma_{I_k, R}$ is also locally connected.
Thus, the union $\{ z \ : \  R_0 < |z| < 1 \} \cup\bigcup_{k = 1}^r \Sigma_{I_k, R}$ is locally connected, and its inverse
coincides with $\Sigma \cap \mathbb{E}$ by Lemma \ref{finitecomp}.
\end{proof}

\subsection{A remark on irreducibility} \label{s:remirr}

Note that the irreducibility of the kneading polynomials is a very delicate issue. 
In fact, if the parameter $c$ is renormalizable, (e.g. if the dynamical system $f_c$ ``splits" into 
two dynamical systems) then $P_c(t)$ is reducible by eq. \eqref{eq:renorm}. On the other hand, 
there are also non-renormalizable maps for which the corresponding kneading polynomial
is reducible. 
For instance, the polynomial
\begin{equation}
P_c(t) := 1 - t- t^2 + t^3 - t^4 + t^5 - t^6
\end{equation}
is admissible, and reducible over $\mathbb{Z}[t]$ (in fact $P_c(t) = (1- t - t^3)(1-t^2 + t^3)$ ) 
but the corresponding map $f_c$ is not renormalizable (since it has period $7$).
In such cases, one can formulate the  

\textbf{Question.}
Does the above factorization of $P_c(t)$ arise from some form of splitting of the dynamics of the corresponding map $f_c$ ?

\medskip

Let us note moreover that if $p$ is an odd prime, then the kneading polynomials for real superattracting maps of period $p$
all reduce to the same cyclotomic polynomial $P_c(t) = 1 + t + t^2 + \dots + t^{p-1}$ modulo $2$, and such 
polynomial is irreducible over $\mathbb{Z}/{2 \mathbb{Z}}$ if and only if $2$ is a primitive root of unity modulo $p$ (i.e., $2^k \equiv 1 \mod p$ for all $k = \{1, \dots, p-2\}$).

In a similar spirit, one can study the number of irreducible polynomials with coefficients in the set $\{ \pm 1\}$. This question 
appears to be pretty hard, and is related to several conjectures in number theory (see also \cite{OP}).
More precisely, one can write 
$$\mathcal{I}(n) := \frac{\#\{ \epsilon \in \{\pm 1\}^n \ : \ P_\epsilon(t) := \sum_{i = 1}^{n}   \epsilon_i t^{i-1} \textup{ is irreducible } \}}{2^n}$$
and look at the asymptotic behavior of $\mathcal{I}(n)$. 
By Lemma \ref{irred}, we have $\limsup \mathcal{I}(n) \geq 1/2$; note that for instance,
Artin's primitive root conjecture implies that $\limsup \mathcal{I}(n) = 1$, but in general the question appears to be open; 
in the case of coefficients $0, 1$, the fact that almost all such polynomials are irreducible is due to Konyagin \cite{Ko}.

\section{Dominant strings and hyperbolic components with irreducible kneading polynomial} \label{section:density}

We shall now present the proof of Lemma \ref{densityprimes}.
In order to do so, let us recall some notation on the combinatorics of kneading sequences
we introduced in \cite{Ti}. Let $S = (a_1, \dots, a_n)$ be a finite sequence of positive integers, 
which we will sometimes call a \emph{string}. 
For reasons which will become clear in a moment, the \emph{period} of $S = (a_1, \dots, a_n)$ 
will be the sum of the digits $p(S) := a_1 + \dots + a_n$.
We endow the set of strings with the following partial order.
If $S = (a_1, \dots, a_n)$ and $T = (b_1, \dots, b_m)$ are two finite strings of positive integers, we 
write $S << T$ if there exists a positive index $k \leq \min\{m, n\}$ such that 
$$a_i = b_i \textup{ for all }1 \leq i \leq k-1, 
\textup{  and }\left\{ \begin{array}{ll} a_i < b_i & \textup{if }k\textup{ even} \\
                      a_i > b_i & \textup{if }k \textup{ odd.}
                     \end{array}
                     \right.$$  
A string $S$ of even length is called \emph{dominant} if it is smaller than all its suffixes: 
namely, $S$ is dominant if for each non-trivial splitting $S = XY$ in two substrings $X$ and $Y$, 
one has
$$S << Y.$$              
One should think of this order as an alternate lexicographical order; for instance, $(2, 1) << (1)$
but $(2, 1) << (2, 2)$, while the strings $(2)$ and $(2, 3)$ are not comparable.

The following facts about dominant strings are easily checked:
\begin{enumerate}
 \item if $S = (a_1, \dots, a_m)$ is dominant and $a_1 > 1$, then for each $n \geq 1$ the string 
 $S^n11$ is dominant; 
 \item if $S$ and $T$ are dominant strings and $S << T$, then for each $n \geq 1$ the string $S^n T$ is 
 dominant.
\end{enumerate}

The reason we define dominant strings is that they allow us to construct admissible 
kneading sequences; namely, if $S = (a_1, \dots, a_n)$ is a dominant string, then 
by the criterion of Theorem \ref{admthm} there exists a superattracting real parameter $c$ of period $p(S) = a_1 + \dots + a_n$
with kneading polynomial 
$$P_c(t) = 1 + \sum_{k = 1}^n (-1)^k \sum_{j = a_1 + \dots + a_{k-1} + 1}^{a_1 + \dots + a_k} t^j - t^{a_1 + \dots + a_n}.$$
Such a superattracting parameter will be called a \emph{dominant parameter}. For instance, 
the ``airplane map'' of period $3$ has kneading polynomial $P_c(t) = 1 - t - t^2$, and its corresponding string 
is $S = (2, 1)$ which is dominant. Thus, the airplane parameter is dominant.
Furthermore, we shall call \emph{index} of the string $S = (a_1, \dots, a_n)$ the alternating sum 
$\llbracket S \rrbracket := \sum_{k = 1}^n (-1)^k a_k$.

\begin{proof}[Proof of Lemma \ref{densityprimes}]
By (\cite{Ti}, Lemma 11.5) every non-renormalizable parameter $c < -1$ can be approximated by a dominant parameter.
For this reason, it is enough to prove that dominant parameters can be approximated by centers of 
hyperbolic components with irreducible kneading polynomial.

Note now that, if $S$ is the string associated to a dominant parameter, 
in order to prove that the corresponding kneading polynomial is irreducible
it is sufficient, by Lemma \ref{irred}, to check the two following conditions:

\begin{enumerate}
 \item[(i)] the period $p(S) = 2^N$ for some $N$; 
 \item[(ii)] the index $\llbracket S \rrbracket \equiv 2 \mod 4$.
\end{enumerate}

Let now $c$ be a dominant parameter with associated dominant string $S$, and define 
for any pair of positive integers $a$, $b$ the string 
$$Z_{a, b} := 2\underbrace{1\dots1}_{a \textup{ times}}2\underbrace{1\dots1}_{b \textup{ times}}.$$
It is immediate to check that, if $a$ and $b$ are odd and $a < b$, then the string 
$Z_{a, b}$ is dominant.
We shall see that $c$ can be approximated by a sequence of superattracting parameters 
whose associated strings are of the form $S^n Z_{a, b}$ and satisfy (i) and (ii), 
hence their kneading polynomials are irreducible.

Indeed, since $c$ is non-renormalizable, then it must lie 
 outside the small Mandelbrot set determined by the basilica component, hence 
$c < \tau(-2)$, which in the language of strings translates into the inequality
$\overline{S} < (2, \overline{1})$. As a consequence, for any sufficiently large 
 odd integer $a$ we have $S << 2\underbrace{1\dots1}_{a \textup{ times}}$, 
 hence also 
 $$S << Z_{a, b}$$
 and for each $n$ the string $S^n Z_{a, b}$ is dominant.
 On the other hand, if $n$ is multiple of $4$, then the index of $S^n Z_{a, b}$ is 
 $$\llbracket S^n Z_{a, b} \rrbracket = n \llbracket S \rrbracket + 2 \equiv 2 \mod 4$$
 which satisfies (ii).
 Finally, the period of $S^n Z_{a, b}$ is 
 $$p(S^n Z_{a, b}) = n p(S) + a + b + 2$$
 hence for each $n$ one can choose $b = b_n > a$ such that the period is a power of $2$.
 Then all elements of the sequence $S^n Z_{a, b_n}$ with $n \equiv 0 \mod 4$ satisfy the conditions (i), (ii)
 hence their corresponding parameters converge to $c$ and their kneading polynomials are 
 irreducible.

\end{proof}

\section{Appendix}

We conclude with the proof of a few lemmas about convex sets, which are used in section \ref{section:neigh}.

\begin{proof}[Proof of Lemma \ref{paracrit}]
Let us first show that there exists a constant $c > 0$ such that 
\begin{equation}
\label{dotprod}
\max_{1 \leq i \leq n} \langle w, v_i \rangle \geq c \Vert w \Vert \qquad \forall w \in \mathbb{R}^d.
\end{equation}
Indeed, let $w$ be a vector of unit norm. Since the vector $0$ lies in the interior of the convex hull generated by the $v_i$, 
we can write $$0 = \sum_{i = 1}^n \lambda_i v_i \qquad \textup{for some }0 < \lambda_i < 1$$
hence by taking the dot product with $w$ we realize that there must exist an index $i$ such that $\langle v_i, w \rangle > 0$
(note that there exists an index $i$ such that $\langle w, v_i \rangle \neq 0$ since the $v_i$ span $\mathbb{R}^d$). Thus, equation 
\eqref{dotprod} holds by compactness of the unit ball and scaling. 
Let us now pick a constant $R > 0$, and let $x$ belong to the closure of the ball of radius $R$. If 
there exists an index $1 \leq i \leq n$ such that 
$\Vert x - v_i \Vert < \Vert x \Vert$, then $\Vert x - v_i \Vert < \Vert x \Vert \leq R$ and we are done. Otherwise, by 
writing the condition $\Vert x - v_i \Vert \geq \Vert x \Vert$ in 
terms of dot products we have for each $i$ the inequality $\langle x, v_i \rangle \leq \frac{\Vert v_i \Vert^2}{2}$ thus 
by combining it with equation \eqref{dotprod} we have 
$$c \Vert x \Vert \leq \max_{1 \leq i \leq n} \langle x,  v_i \rangle \leq \max_{1 \leq i \leq n} \frac{\Vert v_i \Vert^2}{2}$$ 
thus $x$ is bounded independently of $R$. As a consequence, it is enough to choose $R$ large enough so that the ball of
center $v_1$ and radius $R$ contains the ball centered at the origin with radius $\max_{1 \leq i \leq n} \frac{\Vert v_i \Vert^2}{2c}$.
\end{proof}

\begin{proof}[Proof of Lemma \ref{onecontained}.]
Let us first prove that $\Sigma$ contains a neighbourhood of $1$. 
Let $z$ be near $1$, and let $\delta := z - 1$. 
For $R > 0$, denote as $B_{\delta, R}$ the parallelogram of vertices 
$\{ \pm R \pm R \delta \}$; we claim that there exist $R > 0$, an integer $n > 0$ and 
a neighbourhood $U$ of $1$ in the complex plane such that for each $z$ in $U$ with $|z| < 1$ and 
non-zero imaginary part (so that the parallelogram is non-degenerate) we have the inclusion
\begin{equation} \label{incl_one}
B_{\delta, R} \subseteq  \bigcup_{\epsilon \in \{ \pm 1 \}^n} \left( \sum_{j = 0}^{n-1} 
\epsilon_j (-1 + z + z^2) z^{3j}  + z^{3n} B_{\delta, R} \right) 
\end{equation}
and moreover the point $u_6(z) := \frac{1 - z- z^2 - z^3- z^4-z^5}{z^6}$ belongs to $B_{\delta, R}$, 
from which the claim follows by Lemma \ref{boundarycrit}. 
The fundamental idea to prove equation \eqref{incl_one} is to perform the computation in a 
basis which changes as $z$ changes (as in \cite{B2}, Proposition 3.3). Namely, for each non-real $z$ in a neighbourhood of $1$, the 
set $\{1, \delta \}$ is an $\mathbb{R}$-basis for $\mathbb{C}$, and multiplication by $z$ is an $\mathbb{R}$-linear 
map which is represented in such a basis by the matrix 
$$T := \matr{1}{0}{1}{1}$$
up to $O(|\delta|)$ as $\delta \to 0$. Then, the point $(-1+z+z^2)z^{3j}$ is represented up to $O(|\delta|)$ by the vector 
$$V_j := -T^{3j} \vect{1}{0} + T^{3j+1} \vect{1}{0} + T^{3j+2} \vect{1}{0} = \vect{1}{3j+3}.$$
Finally, the set $B_{\delta, R}$ has in this basis the vertices $(\pm R, \pm R)$. We can now 
choose $R$ divisible by $4$ and large enough so that Lemma \ref{contained} holds for $m = 3$; then, 
we can choose $n$ so that Lemma \ref{lattice} holds, hence for $|\delta|$ small enough we have that 
equation \eqref{incl_one} holds, and the claim is proven.
 
Let us now pick $z$ close to $-1$; if we let $\delta := z + 1$, then multiplication by $z$ is given in the basis 
$\{1, \delta\}$ by the matrix $\widetilde{T}:= \matr{-1}{0}{1}{-1}$,  up to $O(|\delta|)$. The same argument 
works as for $z$ close to $1$: indeed, in this case we consider the parallelogram $\widetilde{B}_{\delta, R}$
of vertices $\{±Rz, ±R(2 + z)\}$, and by Lemma \ref{contained} we can choose $R$ large enough so that $u_{6}(z)$ 
is contained in $\widetilde{B}_{\delta, R}$. Moreover, let us note that if we choose 
$$\widetilde{V}_j := \widetilde{T}^{3j} \vect{1}{0} + \widetilde{T}^{3j+1} \vect{1}{0} - \widetilde{T}^{3j+2} \vect{1}{0} = 
(-1)^{3j} \vect{-1}{3j+3}$$
then we have 
$\widetilde{\Lambda}_n := \left\{ \sum_{j = 0}^{n-1} \epsilon_j \widetilde{V}_{j} \ : \ \epsilon_j \in \{ \pm 1 \} \right\} = \sigma(\Lambda_n)$ 
where $\sigma(x,y) := (x, -y)$ is the reflection through the $x$-axis. Moreover, if $B$ is the square of vertices of coordinates
$(\pm R, \pm R)$ one has $\sigma(B) = B$, and $\sigma(\frac{1}{2} T^{3n} B) = \frac{1}{2} \widetilde{T}^{3n} B$.
Thus, let us choose $n$ which satisfies Lemma \ref{lattice}, and by applying the reflection $\sigma$ we have 
$$B \subseteq \bigcup_{v \in \widetilde{\Lambda}_n}  v + \frac{1}{2} \widetilde{T}^{3n} B;$$
thus, if we interpret the inclusion in the basis $\{1, \delta\}$ we get for small $|\delta|$  
$$\widetilde{B}_{\delta, R} \subseteq  \bigcup_{\epsilon \in \{ \pm 1 \}^n} \left( \sum_{j = 0}^{n-1} 
\epsilon_j (1 + z - z^2) z^{3j}  + z^{3n} \widetilde{B}_{\delta, R} \right)$$ 
which proves the claim.

\end{proof}


In the following lemma, we will denote by $V_j$ the vector 
$V_j := \vect{1}{3j+3}$ and by $\Lambda_n$ the finite set 
$$\Lambda_n := \left\{ \sum_{j = 0}^{n-1} \epsilon_j V_{j} \ : \ \epsilon_j \in \{ \pm 1 \} \right\}.$$ 
\begin{lemma} \label{lattice}
Fix $R \geq 6$ with $R \equiv 0 \mod 4$, and let $B$ be the square of vertices $( \pm R, \pm R)$.
Then there exists a positive integer $n$ such that we have the inclusion
$$B \subseteq \bigcup_{v \in \Lambda_n}  \left( v + \frac{1}{2} T^{3n} B \right).$$
\end{lemma}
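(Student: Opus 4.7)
The plan is to exploit a change of basis adapted to the shear $T$. Taking $\{V_0,\,V_1-V_0\}=\{(1,3),(0,3)\}$ as a basis of $\mathbb{R}^2$, with new coordinates $(a, S)$, a direct computation gives: $T^{3n}$ acts as $(a, S) \mapsto (a,\,na + S)$; the vector $V_j$ has new coordinates $(1, j)$ so $\Lambda_n = \{(\sum \epsilon_j,\, \sum \epsilon_j j) : \epsilon_j \in \{\pm 1\}\}$; the square $B$ becomes the parallelogram $\{|a| \leq R,\,|a + S| \leq R/3\}$; and $\tfrac{1}{2} T^{3n} B$ becomes $\{|a| \leq R/2,\,|S - (n-1)a| \leq R/6\}$. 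The covering asserted by the lemma is thus equivalent to finding, for each $(a, S) \in B$, a pair $(a_0, S_0) \in \Lambda_n$ with $|a - a_0| \leq R/2$ and $|S - S_0 - (n-1)(a - a_0)| \leq R/6$.

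Next I would describe $\Lambda_n$ explicitly. The first coordinate $a_0 = \sum \epsilon_j$ takes every value in $\{-n, -n+2, \ldots, n-2, n\}$; for each such $a_0$, setting $k = (a_0 + n)/2$, the classical fact that $k$-subset sums of $\{0, 1, \ldots, n-1\}$ fill a contiguous range of integers shows that $S_0 = \sum \epsilon_j j$ takes every integer of parity $\binom{n}{2} \pmod 2$ in
\[ I(a_0) := \left[\tfrac{a_0(n-1)}{2} - \tfrac{n^2 - a_0^2}{4},\,\tfrac{a_0(n-1)}{2} + \tfrac{n^2 - a_0^2}{4}\right], \]
with consecutive values spaced $2$ apart. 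Since $R \geq 6$ gives $R/6 \geq 1$, this spacing fits inside the tolerance, so it suffices to locate $a_0 \in [-n, n] \cap [a - R/2, a + R/2]$ of the correct parity for which $\tau(a_0) := S - (n-1)(a - a_0)$ lies in $I(a_0)$.

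Writing $\tau(a_0) \in I(a_0)$ as an inequality in $a_0$ yields the quadratic pair
\[ (a_0 - (n-1))^2 \leq M + \Delta, \qquad (a_0 + (n-1))^2 \leq M - \Delta, \]
with $M := 2n^2 - 2n + 1$ and $\Delta := 4S - 4(n-1) a$; the bound $|\Delta| \leq 4(n-1) R + 16R/3$ holds on $B$. For $n \geq C R$ with a suitable constant $C$, both right-hand sides are positive and of order $n^2$, so the two constraint intervals overlap in a subinterval of length on the order of $n$. The main technical obstacle is a quantitative check that this intersection meets $[a - R/2, a + R/2]$ in a segment of length at least $2$ uniformly in $(a, S) \in B$---a straightforward but somewhat fiddly Euclidean computation, resolved by fixing $n$ sufficiently large in terms of $R$ so that $|\Delta|/M$ is bounded strictly away from $1$. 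Once that is done, the segment contains an integer $a_0$ of the correct parity in $[-n, n]$, and choosing $S_0 \in I(a_0)$ within distance $1 \leq R/6$ of $\tau(a_0)$ completes the construction.
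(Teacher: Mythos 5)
Your proposal is correct, and it rests on the same two structural ingredients the paper uses: the shear-adapted change of basis to $\{V_0,\,V_1-V_0\}$, and the characterization of $\Lambda_n$ via the classical contiguity of $k$-element subset sums of $\{0,\dots,n-1\}$ (this is exactly the ``elementary computation'' producing the interval $[m_{a,n},M_{a,n}]$ in the paper, translated to new coordinates). Where you diverge is in the covering step. The paper restricts the candidate first coordinate $a_0$ to the three-element set $\{-R,0,R\}$ (which automatically gives $|a-a_0|\le R/2$ for any $a\in[-R,R]$), so verifying that $(a_0,b)\in\Lambda_n$ reduces to a direct check that the required $b$-range sits inside $[m_{a_0,n},M_{a_0,n}]$ with the correct residue mod $6$; this is where the hypothesis $R\equiv 0\pmod 4$ is used, to force $M_{a_0,n}\equiv 0\pmod 6$ once $n\equiv 0\pmod 4$. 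You instead allow $a_0$ to range over all admissible integers in $[a-R/2,\,a+R/2]$ and turn the condition $\tau(a_0)\in I(a_0)$ into the two quadratic inequalities $(a_0\mp(n-1))^2\le M\pm\Delta$. This buys a small simplification: because you work directly with the step-$2$ progression in $S_0$ and the tolerance $R/6\ge 1$, no congruence matching is required, and the hypothesis $R\equiv 0\pmod 4$ is not used at all. The ``fiddly Euclidean computation'' you defer does close without surprises: on $B$ one has $|a_0|\le 3R/2$, so both quadratic constraints follow from $(n-1+3R/2)^2\le M-|\Delta|$, which (after expanding, using $|\Delta|\le 4(n-1)R+16R/3$) reduces to $n^2-7Rn+\tfrac{5R}{3}-\tfrac{9R^2}{4}\ge 0$; this holds for all $n\ge 8R$, and then the window $[a-R/2,a+R/2]$, of length $R\ge 6>2$, contains an integer $a_0\equiv n\pmod 2$ inside $[-n,n]$. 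So the plan is sound; the paper's choice of the discrete set $\{-R,0,R\}$ is a shortcut that avoids the quadratic analysis, at the cost of the extra divisibility hypothesis on $R$.
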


\begin{proof}
Let $n \geq 1$, and $a$ such that $|a| \leq n$, $a \equiv n \mod 2$.
An elementary computation shows that the set $\Lambda_n$ contains all elements 
of the form 

$$\left\{ (a, b)\in \mathbb{Z}^2 \ : \ m_{a,n} \leq b \leq M_{a,n}, \ b \equiv M_{a,n} \mod 6 \right\}$$ 
where 
$$
\begin{array}{l}
M_{a,n} := (6a(n+1)+ 3(n^2-a^2))/4 \\
m_{a,n} := (6a(n+1)- 3(n^2-a^2))/4. 
\end{array}$$
Now, since $R \equiv 0 \mod 4$, if we choose $n \equiv 0 \mod 4$ we have that $M_{R, n} \equiv 0 \mod 6$; 
if we choose $n$ large enough, then $(3n+3)R/2 \leq |m_{R, n}|$, so $\Lambda_n$ contains the set 
\begin{equation} \label{contains}
\left\{ (a, b)\in \mathbb{Z}^2  \ : \ a \in \{-R, 0, R\}, \ |b| \leq \frac{(3n+3)R}{2}, \ b \equiv 0 \mod 6 \right\}.
\end{equation}

Let now $(x, y) \in B$. Then there exists $a \in \{-R, 0, R\}$ such that $|x + a| \leq R/2$, 
and since $R \geq 6$ there exists $b$ multiple of $6$ such that $3n(x+a) - R/2 \leq y + b \leq 3n(x+a) + R/2$.
Thus, by construction the vector $(x,y) + (a, b)$ belongs to the parallelogram $\frac{1}{2} T^{3n} B$, 
which has vertices
$$\pm \frac{R}{2}\vect{1}{3n} \pm \frac{R}{2} \vect{0}{1}.$$
Moreover, since $y$ belongs to $B$, we have the inequality 
$$|b| \leq |y+b| + |y| \leq \frac{(3n+3)R}{2}$$
so by the previous discussion (see equation \eqref{contains}) the vector $(a, b)$ belongs to $\Lambda_n$ 
and the claim is proven.
\end{proof}

\begin{lemma} \label{contained}
Fix $m \geq 2$. Then for large enough $R > 0$ there exists a neighbourhood $U$ of $1$ such that 
for all $z \in U$ the point 
$$u_m(z) := \frac{1 - \sum_{k = 1}^{m-1} z^k}{z^m}$$ 
is contained in the parallelogram $B_{\delta, R}$ of vertices $\{\pm R z, \pm R(2-z) \}$. 
Similarly, for large enough $R > 0$ there exists a neighbourhood $\widetilde{U}$ of $-1$
such that for all $z \in \widetilde{U}$ the point $u_m(z)$ is contained in the parallelogram $\widetilde{B}_{\delta, R}$ 
of vertices $\{\pm R z, \pm R(2+z) \}$.
\end{lemma}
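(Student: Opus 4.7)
The plan is as follows. First, I would check that the rational function $u_m(z) = (1 - z - \cdots - z^{m-1})/z^m$ has a removable singularity at $z = 1$ with value $u_m(1) = 2 - m$: writing the numerator as $(1 - 2z + z^m)/(1-z)$, both numerator and denominator vanish to first order, and L'Hôpital gives the limit. Thus $u_m$ extends real-analytically through $z = 1$.

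Second, setting $\delta := z - 1$, I would describe the parallelogram in an adapted basis. Since $z + (2-z) = 2$ and $z - (2-z) = 2\delta$, a real combination $\alpha z + \beta(2-z)$ with $\alpha, \beta \in \mathbb{R}$ equals $a + b \delta$ with $a = \alpha + \beta$ and $b = \alpha - \beta$, and conversely; for nonreal $z$, $\{1, \delta\}$ is therefore an $\mathbb{R}$-basis of $\mathbb{C}$, and a point $w = a + b\delta$ lies in $B_{\delta, R}$ exactly when $|a+b| \leq 2R$ and $|a-b| \leq 2R$. In these coordinates the parallelogram is the square of vertices $(\pm 2R, 0),(0,\pm 2R)$, independently of $\delta$.

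Third, I would show that the $(a,b)$-coordinates of $u_m(z)$ remain bounded. Using the Taylor expansion $u_m(1+\delta) = (2-m) + c_1 \delta + c_2 \delta^2 + \cdots$ with $c_k \in \mathbb{R}$, splitting $\delta = \xi + i\eta$ into real and imaginary parts, and solving $u_m(z) = a + b\delta$ by $b = \mathrm{Im}(u_m(z))/\eta$ and $a = \mathrm{Re}(u_m(z)) - b\xi$, a direct computation gives $a = (2-m) + O(|\delta|^2)$ and $b = c_1 + O(|\delta|)$, so $(a,b) \to (2-m, c_1)$ as $\delta \to 0$ nonreally. Choosing $R$ large enough that the open square $|a+b| < 2R$, $|a-b| < 2R$ contains $(2-m, c_1)$ in its interior, continuity supplies a neighborhood $U_1$ of $1$ on which $u_m(z) \in B_{\delta, R}$ for nonreal $z$. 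The case of real $z$ near $1$ is absorbed by the simpler choice $\alpha = \beta = u_m(z)/2$, which bounds both coefficients by $|u_m(z)|/2 \to |2-m|/2 < R$. The case $z$ near $-1$ is analogous with $\delta := z+1$: one has $u_m(-1) = -1$ for $m$ odd and $u_m(-1) = 2$ for $m$ even, the identities $z + (2+z) = 2\delta$ and $-z + (2+z) = 2$ play the symmetric role, and the argument proceeds verbatim.

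The crux and main obstacle is the third step: the basis $\{1, \delta\}$ degenerates as $\delta \to 0$, so a priori the coordinates of $u_m(z)$ in this basis could blow up. What saves the argument is the fact that $u_m$ has real Taylor coefficients at $z = 1$, which forces $\mathrm{Im}(u_m(z))$ to vanish to first order in $\mathrm{Im}(z)$; the quotient $b = \mathrm{Im}(u_m(z))/\mathrm{Im}(z)$ therefore stays bounded, and the coordinates converge to the finite limit $(2-m, c_1)$ rather than running off to infinity.
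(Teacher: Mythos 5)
Your proof is essentially correct and follows the same route as the paper's much terser argument: Taylor-expand $u_m$ at $z = 1$ (with necessarily real coefficients, since $u_m$ has real coefficients as a rational function), pass to coordinates in the $\mathbb{R}$-basis $\{1, \delta\}$, and verify that the coordinates of $u_m(z)$ converge to the finite point $(2-m, c_1)$ as $\delta \to 0$ rather than blowing up. You correctly identify the crux — the basis degenerates, but the reality of the Taylor coefficients forces $\mathrm{Im}\,u_m(z)$ to vanish to first order in $\eta = \mathrm{Im}\,\delta$, so $b = \mathrm{Im}\,u_m(z)/\eta$ stays bounded. The paper gives $c_1 = \frac{m^2-3m}{2}$ explicitly and takes $R > \max\{|2-m|, \frac{|m^2-3m|}{2}\}$, matching your threshold.

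There is, however, a slip in your description of the parallelogram. The vertices $\pm Rz = \pm R(1+\delta)$ and $\pm R(2-z) = \pm R(1-\delta)$ correspond in $(a,b)$-coordinates (where $w = a + b\delta$) to the four points $(\pm R, \pm R)$, so $B_{\delta,R}$ is the axis-aligned square $[-R,R]^2$. You instead characterize it by $|a+b| \le 2R$, $|a-b| \le 2R$, i.e.\ as the diamond with vertices $(\pm 2R, 0), (0, \pm 2R)$; that set is the convex hull of $\{\pm Rz \pm R(2-z)\}$, a strictly larger region with different vertices. The mistake comes from reading the membership condition as $|\alpha|, |\beta| \le R$ in $\alpha z + \beta(2-z)$, whereas the correct condition (from the convex hull of $A, -A, B, -B$ with $A = Rz$, $B = R(2-z)$) is $|s| + |t| \le 1$ in $sA + tB$, which translates to $\max(|a|,|b|) \le R$. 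Fortunately this does not damage the proof: your computation shows $(a,b) \to (2-m, c_1)$, so choosing $R > \max(|2-m|, |c_1|)$ and $|\delta|$ small still lands the point inside the correct square. One further minor point: $u_m$ has no singularity at $z = 1$ at all (the denominator $z^m$ is nonzero there), so the "removable singularity" detour via $(1-2z+z^m)/(1-z)$ is an unnecessary complication — evaluating the original expression directly gives $u_m(1) = 2-m$.
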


\begin{proof}
By an elementary calculation we have $u_m(z) = (2-m) + \frac{m^2-3m}{2}(z-1) + O(|z-1|^2)$
so the claim holds as long as $R > \max\{|2-m|, \frac{|m^2-3m|}{2}\}$. The second case is completely analogous.
\end{proof}

\medskip

\subsection*{Entropies of maps along veins of the Mandelbrot set.}

A set similar to $\Sigma$ can be constructed for any vein $v$ in the Mandelbrot set,
not necessarily real. Namely, for each superattracting parameter $c$ in the Mandelbrot set, one can 
consider the restriction of the map $f_c$ to its Hubbard tree, and its growth rate will be an algebraic number.
Thus, given any vein $v$ in the Mandelbrot set, one can plot the union of all Galois conjugates of all 
superattracting parameters which belong to $v$. Here we show the pictures for the principal veins in the 
$1/3$, $1/5$ and $1/11$-limbs (Figures \ref{galois3}, \ref{galois5} and \ref{galois11}). 

\begin{figure}[ht!]
 \centering
\includegraphics[width=0.85\textwidth]{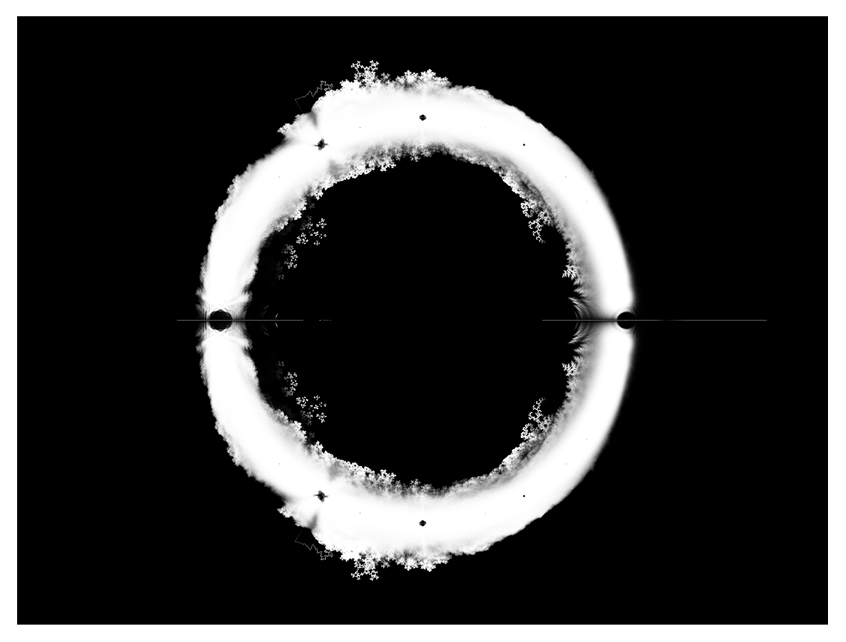}
\caption{Galois conjugates of entropies of maps along the vein $v_{1/3}$.}
\label{galois3}
\end{figure}

\begin{figure}[ht!]
 \centering
\includegraphics[width=0.85\textwidth]{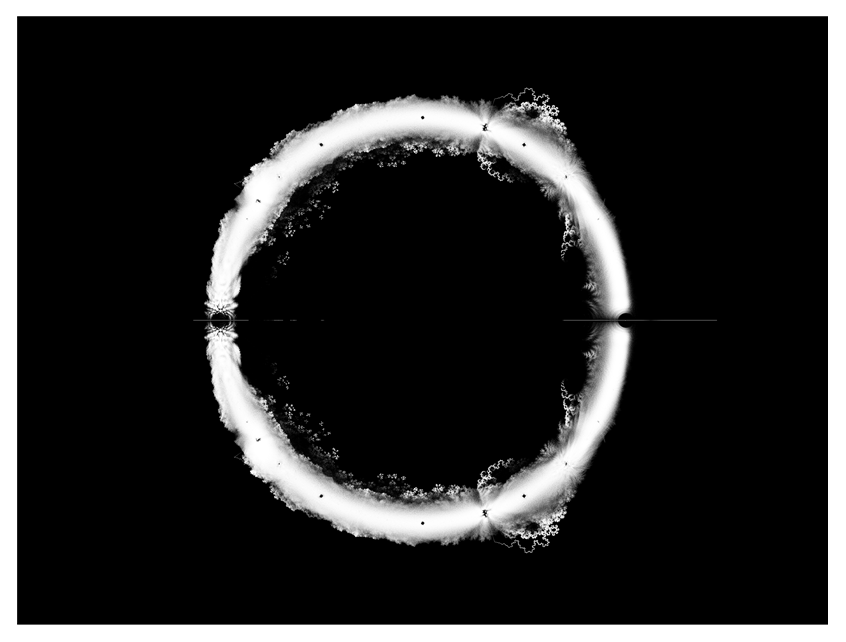}
\caption{Galois conjugates of entropies of maps along the vein $v_{1/5}$.}
\label{galois5}
\end{figure}

\begin{figure}[ht!]
 \centering
\includegraphics[width=0.85\textwidth]{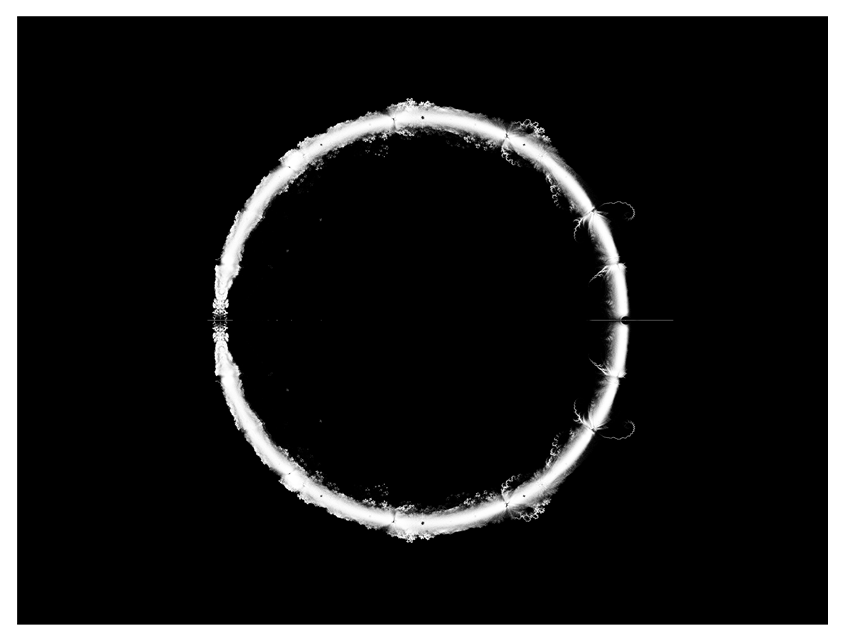}
\caption{Galois conjugates of entropies of maps along the vein $v_{1/11}$.}
\label{galois11}
\end{figure}

\newpage


\begin{thebibliography}{99}

\bibitem[B1]{B1}
{\sc T. Bousch},
{\it Paires de similitudes Z $\to$ SZ+1, Z $\to$ SZ-1}, 
preprint 1988.

\bibitem[B2]{B2}
{\sc T. Bousch}, 
{\it Connexit\'e locale et par chemins h\"olderiens pour les systèmes itérés de fonctions}, 
preprint 1993.

\bibitem[Do1]{Do1}
{\sc A. Douady},
{\it Algorithms for computing angles in the {M}andelbrot set},
in {\it Chaotic dynamics and fractals ({A}tlanta, {G}a., 1985)},
Notes Rep. Math. Sci. Engrg. vol. 2, Academic Press, Orlando, FL, 1986.


\bibitem[Do2]{Do}
{\sc A. Douady}, 
{\it Topological entropy of unimodal maps: monotonicity for quadratic polynomials},
in ``Real and complex dynamical systems ({H}iller\o d, 1993)'',
NATO Adv. Sci. Inst. Ser. C Math. Phys. Sci.
{\bf 464},  Kluwer, Dordrecht (1995), 65--87.

\bibitem[Ko]{Ko}
{\sc S.V. Konyagin},
{\it On the number of irreducible polynomials with {$0,1$}
              coefficients},
Acta Arith. {\bf 88} (1999), no. 4, 333--350.

\bibitem[MT]{MT}
{\sc J. Milnor, W. Thurston},
{\it On iterated maps of the interval},
Dynamical systems ({C}ollege {P}ark, {MD}, 1986--87), Lecture Notes in Math.
{\bf 1342}, 465--563, Springer, Berlin, 1988.	

\bibitem[OP]{OP}
{\sc A.M. Odlyzko, B. Poonen},
{\it Zeros of polynomials with {$0,1$} coefficients},
Enseign. Math. (2) {\bf 39} (1993), no. 3-4, 317--348.

\bibitem[Th]{Th}
{\sc W. Thurston}, 
{\it Entropy in dimension one}, 
preprint 2011.

\bibitem[Ti]{Ti}
{\sc G. Tiozzo}, 
{\it Topological entropy of quadratic polynomials and dimension of sections of the Mandelbrot set},
arXiv:1305.3542 [math.DS].

\end{thebibliography}
\end{document}